\newtheorem{theo}{Theorem}[section]
\newtheorem{lem}[theo]{Lemma}
\newtheorem{rem}[theo]{Remark}
\newtheorem{co}[theo]{Corollary}
\newtheorem{prop}[theo]{Proposition}
\newtheorem{fact}[theo]{Fact}
\newtheorem{defn}[theo]{Definition}
\numberwithin{equation}{section}
\newcommand{\cc}{{\mathcal C}}
\newcommand{\caa}{{\mathscr A}}
\newcommand{\crr}{{\mathcal R}}
\newcommand{\cxx}{{\mathcal X}}
\newcommand{\cpp}{{\mathcal P}}
\newcommand{\cf}{{\mathscr F}}
\newcommand{\cgg}{{\mathcal G}}
\newcommand{\cw}{{\mathcal W}}
\newcommand{\rr}{\mathbb{R}}
\newcommand{\nn}{\mathbb{N}}
\newcommand{\charfun}{\ensuremath{\mathbbm 1}} 
\newcommand{\app}{{\mathcal A}}
\newcommand{\spp}{{\mathcal X}}
\newcommand{\dic}{{\mathcal D}}
\newcommand{\sm}[1]{\ensuremath{#1'}}  
\newcommand{\la}[1]{\ensuremath{#1''}} 
\newcommand{\spy}{{\mathcal Y}}
\newcommand{\vk}{{\underline{k}}}
\newcommand{\vz}{{\underline{z}}}
\DeclareMathOperator{\card}{card}
\DeclareMathOperator{\Span}{span}
\title[Properties of local orthonormal systems: variation spaces]{Properties of local orthonormal systems, \\ Part III: Variation spaces}
\author[J. Gulgowski]{Jacek Gulgowski}
\address{Faculty of Mathematics, Physics and Informatics, University of Gda\'nsk,
ul. Wita Stwosza 57, 80-308 Gda\'nsk , Poland}
\email{jacek.gulgowski@ug.edu.pl}
\author[A. Kamont]{Anna Kamont}
\address{Institute of Mathematics, Polish Academy of Sciences, Branch in Gda\'nsk, ul. Abrahama 18, 81-825 Sopot, Poland}
\email{Anna.Kamont@impan.pl}
\author[M. Passenbrunner]{Markus Passenbrunner}
\address{Institute of Analysis, Johannes Kepler University Linz, Austria, 4040 Linz, 
Altenberger Stra\ss e 69}
\email{markus.passenbrunner@jku.at, markus.passenbrunner@gmail.com}
\date{\today}
\keywords{variation spaces, approximation spaces, local orthonormal systems, greedy approximation}
\subjclass{41A46, 26A45, 42C05, 42C40, 46E30}
\begin{document}

\begin{abstract}
In  [Y.~K.~Hu, K.~A.~Kopotun, X.~M.~Yu, Constr. Approx. 2000],
the authors have obtained a characterization of best $n$-term piecewise 
polynomial approximation spaces 
as real interpolation spaces between $L^p$ and some spaces 
of  bounded dyadic ring variation.
We extend this characterization 
to the general setting of binary filtrations and 
finite-dimensional subspaces of $L^\infty$ 
as discussed in our earlier papers
 [J.~Gulgowski, A.~Kamont, M.~Passenbrunner, 
 arXiv:2303.16470 and arXiv:2304.05647].
 Furthermore, we study some analytical properties  
 of thus obtained abstract spaces of 
 bounded ring variation, as well as their connection 
 to greedy approximation by corresponding local 
 orthonormal systems.

\end{abstract}

\maketitle

\section{Introduction}

This paper is a complement to our recent papers \cite{part1, part2}, which in turn have been motivated by 
P. Petrushev \cite{pp.2003.a}. It follows by combining Theorems 3.3 and 5.3 of P.~Petrushev \cite{pp.2003.a}   that best $n$-term approximation spaces  with respect to  the $d$-variate Haar system and with respect to functions which are piecewise constant on $d$-dimensional dyadic cubes  are the same. In \cite{part1, part2} we have studied the question whether this result extends to the following more general setting. 

Let $(\Omega,\mathscr F,\mathbb P) = (\Omega,\mathscr F,|\cdot|)$ be a probability space and let $(\mathscr F_n)_{n=0}^\infty$ be 
a \emph{binary} filtration, meaning that exactly one atom of $\mathscr F_{n-1}$ is 
divided into \emph{two} atoms of $\mathscr F_n$, without any restriction 
on their respective measures. Let $S \subset L^\infty(\Omega)$ be a finite-dimensional
 linear subspace, satisfying an additional stability property on the atoms 
 $\caa$ of $\cf$. For these data, we consider two dictionaries:
\begin{itemize}
\item $\cc = \{ f \cdot \charfun_A: f \in S, A \in \caa\}$,
\item $\Phi $ -- a local orthonormal system generated by $S$ and the filtration $(\mathscr F_n)_{n=0}^\infty$.
\end{itemize}
Denoting $L^p(S) = \overline{\Span}_{L^p(\Omega)} \cc = \overline{\Span}_{L^p(\Omega)} \Phi$, 
$1 < p < \infty$, we consider the two scales of best
 $n$-term approximation spaces, $\app_q^\alpha(L^p(S), \cc)$, corresponding to the dictionary $\cc$,  and $  \app_q^\alpha(L^p(S), \Phi)$, corresponding to the dictionary $\Phi$, with $\alpha > 0 $ and $0< q \leq \infty$.
We have been interested in the following question: 
what are the conditions  on $S$ and $\cf$ such that
$\app_q^\alpha(L^p(S), \Phi) =
\app_q^\alpha(L^p(S), \cc)$?

First, in \cite{part1}, we have studied the properties 
of the orthonormal system $\Phi$ in $L^p(\Omega), 1<p<\infty$.
The main results of \cite{part1} -- Theorems 1.1 and 1.2, and their Corollary 7.3 -- state
 that for each $1 < p < \infty$, the 
 $L^p$-normalized system $\Phi_p$ resulting from $\Phi$  is a greedy basis in $L^p(S)$, 
 satisfying the $p$-Temlyakov property.
With this result at hand, we have continued with our question in \cite{part2}.
 It is clear that $\app_q^\alpha(L^p(S), \Phi) \hookrightarrow
\app_q^\alpha(L^p(S), \cc)$. 
In the other direction, it is not hard to identify a specific Bernstein 
type inequality $\operatorname{BI}(\caa, S, p, \tau)$, with parameters 
$1 < p< \infty$, $0< \tau < p$ and $\beta = 1/\tau - 1/p$, for which the following holds:
\begin{itemize}
\item The Bernstein inequality $\operatorname{BI}(\caa, S, p, \tau)$ is 
simultaneously a necessary condition for embeddings $\app_q^\alpha(L^p(S), \cc) \hookrightarrow
\app_q^\alpha(L^p(S), \Phi)$ for all $\alpha > \beta$, $0<q \leq \infty$, and a sufficient condition for embeddings 
$\app_q^\alpha(L^p(S), \cc) \hookrightarrow
\app_q^\alpha(L^p(S), \Phi)$ for all $0< \alpha < \beta$ and $0< q \leq \infty$.
\end{itemize}
The main result of \cite{part2} -- Theorem 1.1 -- is a geometric condition 
on $S$ and $\caa$, depending also on $p$ and $\tau$, 
which is equivalent to the Bernstein inequality $\operatorname{BI}(\caa, S, p, \tau)$. 
In particular, it follows that in some cases the Bernstein inequality 
$\operatorname{BI}(\caa, S, p, \tau)$ may not be satisfied, and consequently 
$\app_q^\alpha(L^p(S), \cc) \not =
\app_q^\alpha(L^p(S), \Phi)$ for all $\alpha > \beta$, $0<q \leq \infty$

Since $\Phi_p$ is a greedy basis in $L^p(S)$ with the $p$-Temlyakov property, the approximation space 
$\app_q^\alpha(L^p(S), \Phi) = \app_q^\alpha(L^p(S), \Phi_p)$
can be characterized in terms of coefficients of $f \in L^p(S)$ with respect to $\Phi_p$, see e.g. the monograph V. Temlyakov \cite{greedy}, Chapter 1.8, Theorem 1.80. 
This extends an earlier result of R. DeVore \cite[Section 7.6]{dv.1998.nl} for wavelets
to greedy bases with the $p$-Temlyakov property. 

For completeness, it seems to be of interest to have some external characterization
of spaces  $\app_q^\alpha(L^p, \cc)$, which holds also in case when
$\app_q^\alpha(L^p, \cc)  \not =  \app_q^\alpha(L^p, \Phi)$. For this, we turn to the paper Y.K. Hu, K.A. Kopotun, X.M.Yu \cite{hky.2000}. 
In \cite{hky.2000},  the authors study best $n$-term approximation spaces with respect to dictionaries $\cpp_{d,r}$ consisting of  $d$-variate polynomials of degree $r-1$ restricted to $d$-dimensional dyadic cubes. 
For this, they introduce some version of \emph{bounded ring variation} spaces $V_{\tau,p}^r[0,1]^d$ on $[0,1]^d$.
One of the main results of \cite{hky.2000} is the identification of 
the best $n$-term approximation spaces  $\app^\alpha_q(L^p[0,1]^d, \cpp_{d,r}) $ as real interpolation spaces between
$L^p[0,1]^d$ and $V_{\tau,p}^r[0,1]^d$. More precisely, by combining Lemma 3 and Corollary 8 of \cite{hky.2000},
the following is true:
\begin{itemize}
\item[(A)] We have  $\app^\alpha_q(L^p[0,1]^d, \cpp_{d,r}) = (L^p[0,1]^d, V_{\tau,p}^r[0,1]^d)_{\alpha/\beta, q}$
for $0 <\tau < p < \infty, 0 < \alpha < \beta = 1/\tau - 1/p$ and $0<q\leq\infty$.
\end{itemize}
It occurs that both the definition of the bounded ring variation space  $V_{\tau,p}^r$ 
and the  interpolation result (A) from \cite{hky.2000} can be extended to   
our more general setting, cf. Definition \ref{def.var}  and  
Section \ref{ssq.1}, with Theorem \ref{theo.101} being the main result in that direction. 
This is the first part of results of this paper.

Theorem \ref{theo.101} motivates us to look more carefully at the extended versions of 
variation spaces $V_{\sigma,p}$, 
comparing them to the \emph{usual} bounded variation space $\operatorname{BV}(\rr^d)$, as discussed e.g in W. P. Ziemer \cite{z.1989}. 
In particular,  the elements of the  space $\operatorname{BV}(\rr^d)$  can be  characterized by approximation  with smooth functions in the following sense, 
cf. Theorems 5.2.1 and 5.3.3
of \cite{z.1989}:
\begin{itemize}
\item[(B)]
$f \in \operatorname{BV}(\rr^d)$ if and only if there is a sequence of $f_n \in W_1^1(\rr^d)$ such that 
\begin{equation}\label{int.e1}
 \| f - f_n\|_1 \to 0 \quad \hbox{and}\quad \sup_n | f_n |_{W_1^1} < \infty.
\end{equation}
Moreover, 
$$
| f |_{\operatorname{BV}} = \inf_{(f_n)} \liminf_{n \to \infty} | f_n |_{W_1^1},
$$
where the infimum is taken over all sequences $(f_n)$ satisfying \eqref{int.e1}.
\end{itemize}
We are interested in obtaining a variant of (B) for spaces $V_{\sigma,p}$, with finite linear combinations of elements of $\cc$ replacing smooth functions.
This is the content of  Section \ref{ssq.2}, with Theorem \ref{theo.txt.1} 
and Corollary~\ref{co:liminf} being 
the main results in this direction. 
This is the second part of our results. 
Note that Theorem \ref{theo.txt.1} can be seen as a variant 
of Theorem 3 in P. Wojtaszczyk \cite{pw.2003} for $\operatorname{BV}(\rr^d)$, or its version for averaging projections with respect to dyadic cubes in $\rr^d$, cf. page  488 of \cite{pw.2003}, for spaces $V_{\sigma,p}$ instead of 
$\operatorname{BV}(\rr^d)$.
Let us recall that Yu. Brudnyi \cite{yb.2017} 
has used another version of bounded variation 
spaces for his study of best $n$-term approximation 
by polynomials on dyadic cubes.
Theorem 3.2 of \cite{yb.2017} can be regarded as a counterpart
 of (B) in the setting of \cite{yb.2017}.

In the first two sets of results we are able to deal with the range $0<p<\infty$, similarly as in Y.K. Hu, K.A. Kopotun, X.M.Yu \cite{hky.2000}.

The last problem treated in this paper is the question whether the K-functional for $f \in L^p(S)$ and the pair $(L^p(S),V_{\sigma,p})$ is attained 
for greedy approximation of $f$ with respect to $\Phi_p$ in $L^p(S)$. This question is motivated by a positive answer to the analogous question for the pair 
$(L^{d^*}(\rr^d), \operatorname{BV}(\rr^d))$ with $d^* = \frac{d}{d-1}$  and greedy approximation of $f$ with respect to the $d^*$-normalized Haar system, or
a $d^*$-normalized
 compactly supported wavelet system,   cf. A. Cohen, R. Devore, P. Petrushev, H. Xu \cite{cdpx.1999}, Theorem 9.2 ($d=2$, Haar system),
P. Wojtaszczyk \cite{pw.2003}, Theorem 12 ($d \geq 2$, Haar system), or P. Bechler, R. DeVore, A. Kamont, G. Petrova, P. Wojtaszczyk \cite{bv.2007},
Theorem 1.2 ($d\geq 2$,  compactly supported wavelet system). More precisely:
\begin{itemize}
\item[(C)] We have the equivalence 
 $$K(f, L^{d^*}(\rr^d), \operatorname{BV}(\rr^d), N^{-1/d}) \simeq \| f - \cgg_N f \|_{d^*} + N^{-1/d} |\cgg_N f |_{\operatorname{BV}},$$
with $\cgg_N f$ being $N$-term greedy approximation in $L^{d^*}(\rr^d)$ to $f$ with respect to a  $L^{d^*}$-normalized
compactly supported and sufficiently regular wavelet system; in particular, the Haar system is allowed.
\end{itemize}

Because of the role of the Bernstein-type inequality $\operatorname{BI}(\caa, S, p, \tau)$
in the equality $\app_q^\alpha(L^p(S), \cc) =  \app_q^\alpha(L^p(S), \Phi)$, 
it is natural to consider this question under the assumption that 
the Bernstein-type inequality $\operatorname{BI}(\caa, S, p, \tau)$ holds true. 
In fact, Proposition~\ref{prop:notw3} in Section \ref{sec:k-functional} indicates 
 that an analogue of the equivalence (C) requires some geometric interplay between 
 $S$ and $\caa$. Unfortunately, we have not been able to answer this question in 
 this generality. However, we are able to prove some embeddings between the space
 $V_{\sigma,p}$ and the spaces $\app_q^\alpha(L^p(S),\Phi)$, where
  $\alpha=1/\sigma -1/p$, cf. Theorems \ref{thm:embed_app_var} and \ref{thm:embed_var_app}. 
These embeddings imply some weak version of the positive answer to the question on the $K$-functional, cf. Theorem~\ref{thm:K_equiv}.

The paper is organized as follows. The necessary background material is recalled 
in Section~\ref{sec:def}.
The variation spaces $V_{\sigma,p}$ and their properties are discussed 
in Section~\ref{txt.s1}, with the counterpart of (A) and related results
 presented in Section \ref{ssq.1}, and the counterpart of (B) presented in 
 Section \ref{ssq.2}. The proofs of the results from Section \ref{ssq.1} 
 follow the arguments analogous to their counterparts in \cite{hky.2000}, 
 so their proofs are postponed to Appendix  \ref{app.s1}. Finally, 
 in Section \ref{sec:embedding},
we discuss embeddings between the spaces $V_{\sigma,p}$ and 
$\app_q^\alpha(L^p(S), \Phi_p)$, and we use them to get a weak version of (C).

\section{Definitions and preliminaries}
\label{sec:def}

\subsection{Approximation spaces } \label{sec:appr}
In this section, we collect what is needed in the sequel 
about approximation spaces, as 
given for instance in  \cite[Chapter 7, Sections 5 and 9]{constr.approx}.
Let $(\spp, \| \cdot \|)$ be a quasi-normed space, and let $\dic \subset \spp$ be a subset that is linearly dense in $\spp$; the set $\dic$ is called a \emph{dictionary}.
We are interested in approximation spaces corresponding to the  best approximation by $n$-term linear  combinations of elements of 
$\dic$. That is, let for $n \in \nn$
$$
\Sigma_n = \Sigma_n^\dic = \Big\{ \sum_{j=1}^n c_j x_j : \text{with }x_j \in \dic \text{ and } \text{scalars } c_j \Big\}.
$$
Then, the smallest $n$-term approximation error by linear combinations of $\dic$ is defined by
$$
 \sigma_n(x) = \sigma_n(x,\dic) =  \inf\{\| x - y\|: y \in \Sigma_n\},\qquad x\in \spp.
$$
The approximation spaces $\app_q^\alpha(\spp, \dic)$ with $0 < q \leq \infty$, $\alpha >0$, are defined by
$$
\app_q^\alpha(\spp, \dic) = \{ x \in \spp: \| x \|_{\app_q^\alpha} = \| x \| + 
 \| \{ 2^{n \alpha} \sigma_{2^n}(x), n \geq 0 \} \|_{\ell^q}  < \infty \}.
$$

We will also refer to the following fact about interpolation among those spaces, 
which is a direct consequence of 
\cite[Chapter~7, Theorem~9.1]{constr.approx}:
\begin{fact}
\label{intro.f1}
Let $0< q, \rho \leq \infty$ and $0 < \alpha < \beta $. Then $\app_q^\alpha(\spp, \dic) = (\spp, \app_\rho^\beta(\spp, \dic) )_{\alpha/\beta,q}$.
\end{fact}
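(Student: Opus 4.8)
The plan is to establish the identity by proving the two continuous embeddings $(\spp, \app_\rho^\beta)_{\alpha/\beta, q} \hookrightarrow \app_q^\alpha$ and $\app_q^\alpha \hookrightarrow (\spp, \app_\rho^\beta)_{\alpha/\beta, q}$, comparing both sides to a common weighted-$\ell^q$ description. Write $Y = \app_\rho^\beta(\spp, \dic)$ and $\theta = \alpha/\beta \in (0,1)$, which lies in the admissible range since $0 < \alpha < \beta$. The only structural facts about the family $(\Sigma_n)$ I intend to use are those immediate from its definition: $\Sigma_n \subseteq \Sigma_{n+1}$, the scalar invariance $\lambda \Sigma_n = \Sigma_n$ for $\lambda \ne 0$, the subadditivity $\Sigma_n + \Sigma_m \subseteq \Sigma_{n+m}$ (in particular the doubling $\Sigma_{2^n} + \Sigma_{2^n} \subseteq \Sigma_{2^{n+1}}$), and the density of $\dic$. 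I note first that $Y \hookrightarrow \spp$, since by definition $\| v \|_\spp \le \| v \|_{\app_\rho^\beta}$; hence the $K$-functional $K(t,x) = \inf_{x = u + w}(\|u\|_\spp + t\|w\|_Y)$ is well defined, and I only need its values on the dyadic scale $t = 2^{-n\beta}$, for which the interpolation quasi-norm admits the discretization $\| x \|_{(\spp, Y)_{\theta, q}} \simeq \| (2^{n\alpha} K(2^{-n\beta}, x))_{n \ge 0} \|_{\ell^q}$.

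For the embedding $(\spp, Y)_{\theta, q} \hookrightarrow \app_q^\alpha$ I would bound $\sigma_{2^n}(x)$ from above by the $K$-functional. Fixing any splitting $x = u + w$ with $u \in \spp$, $w \in Y$, the subadditivity of $\sigma$ (coming from $\Sigma_{2^n} + \Sigma_{2^n} \subseteq \Sigma_{2^{n+1}}$) and the quasi-triangle inequality give $\sigma_{2^{n+1}}(x) \lesssim \sigma_{2^n}(u) + \sigma_{2^n}(w)$, where $\sigma_{2^n}(u) \le \|u\|_\spp$ and the membership $w \in Y$ forces the Jackson-type decay $\sigma_{2^n}(w) \lesssim 2^{-n\beta}\|w\|_Y$. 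Taking the infimum over splittings yields $\sigma_{2^{n+1}}(x) \lesssim K(2^{-n\beta}, x)$; multiplying by $2^{n\alpha}$ and taking $\ell^q$ norms (absorbing the harmless index shift) gives $\|x\|_{\app_q^\alpha} \lesssim \|x\|_{(\spp, Y)_{\theta,q}}$.

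For the reverse embedding $\app_q^\alpha \hookrightarrow (\spp, Y)_{\theta, q}$ I would manufacture a good splitting at each scale. Choosing near-best approximants $y_n \in \Sigma_{2^n}$ with $\|x - y_n\|_\spp \lesssim \sigma_{2^n}(x)$, I split $x = y_n + (x - y_n)$. The remainder contributes $\|x - y_n\|_\spp \simeq \sigma_{2^n}(x)$, while the $Y$-norm of $y_n$ is controlled by $\sigma_{2^m}(y_n) = 0$ for $m \ge n$ (since $y_n \in \Sigma_{2^n} \subseteq \Sigma_{2^m}$), by $\sigma_{2^m}(y_n) \lesssim \sigma_{2^m}(x)$ for $m < n$ (again by subadditivity and monotonicity of $\sigma$), and by $\|y_n\|_\spp \lesssim \|x\|_\spp$. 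This produces
\[
2^{n\alpha} K(2^{-n\beta}, x) \lesssim 2^{n\alpha}\sigma_{2^n}(x) + 2^{-n(\beta - \alpha)}\|x\|_\spp + 2^{-n(\beta - \alpha)} \Big(\sum_{m=0}^{n-1} 2^{m\beta\rho}\sigma_{2^m}(x)^\rho\Big)^{1/\rho}.
\]
The first two terms land in $\ell^q$ with the correct norm by a geometric estimate, and the last term is a discrete Hardy operator acting on the sequence $(2^{m\alpha}\sigma_{2^m}(x))_m$, with summability saved by the strictly positive exponent $\beta - \alpha$.

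The main obstacle is precisely this final step: the discrete Hardy inequality, which must be established (or quoted) across the full range $0 < \rho, q \le \infty$ and which, together with the endpoint cases $q = \infty$ or $\rho = \infty$ and the careful tracking of the quasi-norm constants at each use of the triangle inequality, carries essentially all the genuine work. Once these estimates are assembled the identity follows with equivalence of quasi-norms. Since all the structural axioms listed above hold for $(\Sigma_n)$, an economical alternative is to verify those hypotheses and invoke \cite[Chapter~7, Theorem~9.1]{constr.approx} directly, which is the route adopted here.
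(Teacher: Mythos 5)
Your proposal is correct and ultimately takes the same route as the paper, which offers no independent proof but cites \cite[Chapter~7, Theorem~9.1]{constr.approx} directly; your preceding sketch of the Jackson/Bernstein inequalities for the pair $(\spp,\app_\rho^\beta(\spp,\dic))$ together with the discrete Hardy inequality is precisely the standard proof of that cited theorem. No gaps.
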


\subsection{Greedy approximation}\label{sec:greedy}
If $\Psi=(\psi_j)$ is a basis of the Banach space $\mathscr \spp$ and $f\in\spp$ 
has the expansion $ f = \sum_j c_j \psi_j$, we choose a permutation $(k_j)$ of the 
positive integers such that the respective coefficients are decreasing:
\[
	|c_{k_1}| \geq |c_{k_2}| \geq \cdots.
\]
Then, we define the $m$th greedy approximant of $f$ with respect to the 
basis $\Psi$ by 
\[
	\mathscr G_m(f):= \mathscr G_m(f,\Psi) := \sum_{j=1}^m c_{k_j} \psi_{k_j}.
\]
Recall that the basis  $\Psi$ is a \emph{greedy basis} in $\spp$ 
if the smallest  $m$-term approximation error $\sigma_m(x,\Psi)$ of each  
$x\in\spp$ is achieved by greedy approximation: 
\[
\sigma_m(f,\Psi) \simeq \| f - \mathscr G_m (f,\Psi)\|_{\spp}.	
\]
A basis  $\Psi = (\psi_j)$  satisfies 
the \emph{$p$-Temlyakov property} (cf. \cite[Equation~(1.130)]{greedy}) in $\spp$, 
if
for some constant $C$, the 
inequality 
\begin{equation}\label{eq:temlyakov}
C^{-1}(\card \Lambda)^{1/p}\leq \Big\| \sum_{n\in\Lambda} \psi_n \Big\|_\spp \leq C (\card \Lambda)^{1/p}	
\end{equation}
is true for every finite index set $\Lambda$, where $\card \Lambda$ denotes the 
number of elements of $\Lambda$.

In the case of greedy bases $\Psi$ that satisfy the $p$-Temlyakov 
property, the space $\app_q^\alpha(\spp, \Psi)$ can be described in terms of 
the coefficients $(c_n)$ in the expansion $x = \sum c_n \psi_n$  
(see \cite[Theorem 1.80]{greedy}):
\begin{fact}
\label{intro.f3}
Let $\Psi=(\psi_n)$ be a greedy basis satisfying the $p$-Temlyakov property. 
Then we have for $0< \beta,q<\infty$
$$\app_q^\beta(\spp, \Psi) = \Big\{ x = \sum_n c_n \psi_n \in \spp :  
\|( 2^{j(\beta + 1/p )} a_{2^j} )_j \|_{\ell^q }<\infty  \Big\},$$
with equivalence of (quasi-)norms $\| x \|_{\app_q^\beta} \simeq  
\|( 2^{j(\beta + 1/p )} a_{2^j} )_j \|_{\ell^q }$,
where the sequence $(a_k)$ denotes the decreasing rearrangement of $(c_n)$.
\end{fact}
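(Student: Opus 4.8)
The plan is to reduce the computation of the approximation functional to an estimate on the decreasing rearrangement of the coefficients, using the defining property of a greedy basis together with the $p$-Temlyakov property and the unconditionality that a greedy basis automatically enjoys (greedy bases are unconditional and democratic, by the Konyagin--Temlyakov characterization). First I would fix $x=\sum_n c_n\psi_n\in\spp$ with a greedy ordering $|c_{k_1}|\ge|c_{k_2}|\ge\cdots$, write $a_j=|c_{k_j}|$, and record the comparison (lattice) principle: if $|b_j|\le|c_{k_j}|$, then $\|\sum_j b_j\psi_{k_j}\|\le K\|\sum_j c_{k_j}\psi_{k_j}\|$ with $K$ the unconditional constant. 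Since $\Psi$ is greedy, $\sigma_{2^n}(x)\simeq\|x-\mathscr G_{2^n}(x)\|=\|\sum_{j>2^n}c_{k_j}\psi_{k_j}\|$, so everything reduces to estimating these tails.

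For the upper (Jackson-type) bound I would split the tail into dyadic blocks $B_l=\{2^l<j\le 2^{l+1}\}$, $l\ge n$. On $B_l$ every coefficient satisfies $|c_{k_j}|\le a_{2^l}$, so the comparison principle together with the $p$-Temlyakov inequality \eqref{eq:temlyakov} gives $\|\sum_{j\in B_l}c_{k_j}\psi_{k_j}\|\lesssim a_{2^l}\|\sum_{j\in B_l}\psi_{k_j}\|\lesssim a_{2^l}2^{l/p}$. Summing the blocks (triangle inequality in the Banach space $\spp$) yields $\sigma_{2^n}(x)\lesssim\sum_{l\ge n}2^{l/p}a_{2^l}$; multiplying by $2^{n\beta}$, taking $\ell^q$ in $n$, and applying the discrete (tail) Hardy inequality — available precisely because $\beta>0$ — gives $\|(2^{n\beta}\sigma_{2^n}(x))_n\|_{\ell^q}\lesssim\|(2^{n(\beta+1/p)}a_{2^n})_n\|_{\ell^q}$. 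Combined with $\|x\|\le\|x-\mathscr G_1 x\|+\|\mathscr G_1 x\|\lesssim\sigma_1(x)+a_1$, which is controlled by the $n=0$ and $j=0$ terms of the right-hand side (here $\|\psi_{k_1}\|\simeq 1$ by \eqref{eq:temlyakov} for a singleton $\Lambda$), this proves the embedding in which $\|x\|_{\app_q^\beta}$ dominates.

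For the lower (Bernstein-type) bound I would keep only a single block: the comparison principle gives $\sigma_{2^n}(x)\gtrsim\|\sum_{2^n<j\le 2^{n+1}}c_{k_j}\psi_{k_j}\|$, and since each coefficient there is $\ge a_{2^{n+1}}$, comparing downwards and applying the lower half of \eqref{eq:temlyakov} to the $2^n$ elements of the block yields $\sigma_{2^n}(x)\gtrsim a_{2^{n+1}}2^{n/p}$. Hence $2^{n(\beta+1/p)}a_{2^{n+1}}\lesssim 2^{n\beta}\sigma_{2^n}(x)$; taking $\ell^q$ and reindexing $j=n+1$ controls $\|(2^{j(\beta+1/p)}a_{2^j})_{j\ge1}\|_{\ell^q}$ by $\|x\|_{\app_q^\beta}$, while the remaining term $a_1=\max_n|c_n|$ is bounded by $\|x\|$ because the (unconditional, normalized) coordinate functionals are uniformly bounded. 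This gives the reverse embedding and hence the norm equivalence.

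The block estimates and the reindexing are routine; the one place that needs care is the Hardy inequality when $q<1$, where $\ell^q$ is only a quasi-norm and the dyadically decaying contributions must be summed via the $q$-triangle inequality rather than the ordinary one. I also regard the appeal to unconditionality as a genuine input rather than a formality, since the comparison principle is used in essentially every estimate; if one prefers not to quote the Konyagin--Temlyakov theorem, the needed lattice bounds can instead be extracted directly from the greedy and democracy hypotheses, at the cost of a longer argument.
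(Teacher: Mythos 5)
Your proof is correct. The paper does not prove this fact itself but quotes it from \cite[Theorem~1.80]{greedy}, and your argument --- greedy plus unconditionality to reduce $\sigma_{2^n}(x)$ to tails of the decreasing rearrangement, dyadic blocks estimated via the $p$-Temlyakov property, and a discrete Hardy inequality (with the $q$-triangle inequality when $q<1$) --- is exactly the standard proof given there, so it matches the paper's (cited) argument in all essentials.
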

In particular, if we choose $\beta = 1/q - 1/p$ for $0<q<p<\infty$, we obtain 
the equivalence
\begin{equation}\label{eq:special_beta}
\|x\|_{\app_q^\beta} \simeq \|(c_n)\|_{\ell^q}.
\end{equation}

\subsection{Local orthonormal systems}\label{sec:loc}
Let $(\Omega,\mathscr F,\mathbb P) = (\Omega,\mathscr F,|\cdot|)$ be a probability space and let $(\mathscr F_n)_{n=0}^\infty$ be 
a \emph{binary} filtration, meaning that
\begin{enumerate}
	\item $\mathscr F_0 = \{\emptyset,\Omega\}$,
	\item for each $n\geq 1$, $\mathscr F_n$ is generated by $\mathscr F_{n-1}$ and 
			the subdivision of exactly one atom $A_n$ of $\mathscr F_{n-1}$ into
			two atoms $A_{n}', A_n''$ of $\mathscr F_n$ satisfying $|A_n''|\geq |A_n'|>0$.
\end{enumerate}
If $A\in\mathscr A$ is such that it strictly contains another atom from $\mathscr A$, there is a unique index
$n_0 = n_0(A)\geq 1$ so that $A = A_{n_0}\in\mathscr A_{n_0 -1}$. 
We set $\sm{A} := \sm{A_{n_0}}$ and $\la{A} := \la{A_{n_0}}$.

We are also interested in  more general $\nu$-ary
 filtrations.
For an integer $\nu \geq 2$, we mean by a \emph{$\nu$-ary} filtration $(\mathscr F_n)_{n=0}^\infty$ 
a  filtration so that for each $n\geq 1$, each atom $A$ of $\mathscr F_{n-1}$ 
can be written as the union of at most $\nu$ atoms of $\mathscr F_n$. Note that each $\nu$-ary
 filtration can be regarded as a subsequence of some binary filtration, but the choice of such a binary filtration is not unique.
 $\nu$-ary filtrations 
arise in particular for local tensor products as considered in \cite[Section~9]{part1},
where a concrete example with $\nu = 2^d$ is given by the dyadic filtration $(\mathscr D_n)$ on the 
$d$-dimensional unit cube $[0,1]^d$ where $\mathscr D_n$ is generated by the dyadic cubes of 
sidelength $2^{-n}$.

For a $\nu$-ary filtration $(\mathscr F_n)$, let $\mathscr A_n$ be the collection of all atoms of
$\mathscr F_n$ and set $\mathscr A = \cup_{n} \mathscr A_n$.
In the sequel, we refer to sets $A\in\mathscr A$ as \emph{atoms} and differences
$A\setminus B\neq \emptyset$  of two atoms $A,B$ 
 are called \emph{rings}.

Let 
$S$ be a finite-dimensional linear space of $\mathscr F$-measurable, scalar-valued functions on $\Omega$ 
so that
there exist two constants $c_1,c_2\in (0,1]$ so that for each
atom $A\in\mathscr A$ we have the following stability inequality:
		\begin{equation}\label{eq:L1Linfty}
			|\{ \omega\in A : |f(\omega)| \geq c_1\|f\|_{A}
		\}| \geq c_2|A|,\qquad f\in S,
	\end{equation}
	where by $\|f\|_A$ we denote the $L^\infty$-norm of $f$ on the set $A$.
Given any set $V$ of $\mathscr F$-measurable functions on $\Omega$
 and any measurable set $A\in\mathscr F$, we let 
\[
	V_A := V(A) := \{ f\cdot \charfun_A : f\in V\},
\]
where by $\charfun_A$ we denote the characteristic function of the set $A$ defined by 
$\charfun_A(x) = 1$ if $x\in A$ and $\charfun_A(x)=0$ otherwise.
Moreover, for any $\sigma$-algebra $\mathscr G\subset \mathscr F$, we let 
\[
	V(\mathscr G)= \{ f : \Omega\to\mathbb R\;|\;  \text{for each atom $A$ of $\mathscr G$ there exists 
$g\in V$ so that $f\charfun_A = g\charfun_A$} \}.
\]
We also use the abbreviation $V_n := V(\mathscr F_n)$.
Let $P_n$ be the orthoprojector onto $S_n$ for $n\geq 0$ and set $P_{-1}\equiv 0$.
For each integer $n\geq 0$, 
	 we choose an arbitrary orthonormal basis $\Phi_n$
	of the range of $P_n - P_{n-1}$ and define 
	the orthonormal system $\Phi = \cup_{n=0}^\infty \Phi_n$.
	The collection $\Phi = (\varphi_j)$ is called a \emph{local orthonormal system}, since 
	it is easy to see that functions in the range of $P_n  -P_{n-1}$ are supported 
	in the set $A_n$.
	
Now, assume that $(\mathscr F_n)$ is a binary filtration and the stability condition \eqref{eq:L1Linfty} is satisfied.
In \cite{part1}, we showed that for every $1<p<\infty$, the $p$-renormalized system $\Phi$ is greedy in 
$L^p$ and satisfies the $p$-Temlyakov property.
In case of a $\nu$-ary filtration satisfying \eqref{eq:L1Linfty}, 
there exists a local orthonormal system $\Phi$ such that for each $1<p<\infty$,
 the $p$-renormalized system $\Phi$ is  greedy in $L^p$, and it satisfies the $p$-Temlyakov property,
  cf. \cite[Section~9]{part1}.  There is a canonical way to get such a system 
   by viewing a $\nu$-ary filtration satisfying stability condition 
\eqref{eq:L1Linfty} as a subsequence of a binary filtration satisfying condition 
\eqref{eq:L1Linfty} (with some other constants $c_1',c_2'$). However, let us remark 
that it is possible to construct such a system $\Phi$ which is not directly 
linked to some binary filtration, the first  example being  the $d$-variate 
Haar system on the dyadic filtration on $[0,1]^d$, $d \geq 2$. More generally, 
it is possible to get such examples in case of filtrations with local tensor 
product structure. It can be noted that if a local orthonormal system is such
 that its $p$-renormalization is greedy in $L^p$, $1<p<\infty$, then it satisfies 
$p$-Templakov property, cf. \cite[Theorem 9.4]{part1} and its proof.
On the other hand, one should be aware that in case of non-binary filtrations, there may exist a local orthonormal system $\Phi$ which is not a basis in its $L^p$-span, for any $p \neq 2$.
Thus,  both here below and in Section~\ref{sec:embedding}, we make the following assumption: $\Phi$ is any local orthonormal system corresponding to the $\nu$-ary filtration  $(\mathscr F_n)$ 
and such that for each $1< p < \infty$, its $p$-renormalization is  greedy in $L^p$. 

 Therefore, all results 
of Section~\ref{sec:greedy} in particular apply to $p$-normalized local orthonormal systems $\Phi$ which are greedy in $L^p$.

We now consider the two dictionaries
$\cc = \{ f \cdot \charfun_A: f \in S, A \in \caa\}$ and 
$\Phi$ and the corresponding approximation spaces. 
Let $L^p(S) = \overline{{\rm span}}_{L^p(\Omega)}
 \cc =  \overline{{\rm span}}_{L^p(\Omega)} \Phi$. 
 Observe that $\Phi \subset \Sigma_{\nu}^{\cc}$.
As $S$ is finite-dimensional, we have the continuous embedding $\app_q^\alpha (L^p(S), \Phi)
\hookrightarrow \app_q^\alpha (L^p(S), \cc)$ for all $\alpha,q>0$.  
The answer to the question when those two approximation spaces coincide 
is governed by the
validity of the following Bernstein inequality.
\begin{defn}
    Fix $1<p<\infty$ and $0<\tau<p$ and let $\beta := 1/\tau - 1/p >0$. We say that 
    the \emph{Bernstein inequality} $\operatorname{BI}(\mathscr A,S,p,\tau)$ is satisfied 
    if there exists a constant $C$ such that for all positive integers $n$ and all $g\in \Sigma_{2^n}^{\Phi}$
    we have 
\[
    \| f \|_{\mathcal A_\tau^\beta(L^p(S), \Phi)}     \leq C 2^{n\beta} \|f\|_{L^p}.
\]
\end{defn}
Then, we recall the following theorem (see \cite[Theorem~2.6]{part2}; in \cite{part2}, it is stated for binary filtrations, but it is true for $\nu$-ary filtrations as well).
\begin{theo}\label{bernst.equiv} 
	Fix $1<p<\infty$ and $0<\tau<p$ and let $\beta := 1/\tau - 1/p>0$. Then:
	\begin{itemize}
\item[(A)]  
If the Bernstein inequality $\operatorname{BI}(\caa, S, p, \tau) $ is not satisfied, then  for all $\alpha  > \beta$, $0 < q \leq \infty$
there is $\app^\alpha_q (L^p(S), \Phi) \hookrightarrow \app^\alpha_q  (L^p(S), \cc)     $, but $\app^\alpha_q (L^p(S), \Phi) \neq \app^\alpha_q  (L^p(S), \cc)   $
\item[(B)] If the Bernstein inequality  $\operatorname{BI}(\caa, S, p, \tau) $ is satisfied, then for all $0< \alpha  < \beta$, $0 < q \leq \infty$
there is $ \app^\alpha_q(L^p(S), \cc) = \app^\alpha_q (L^p(S), \Phi) $. 
\end{itemize}
\end{theo}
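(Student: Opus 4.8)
The plan is to prove Theorem \ref{bernst.equiv} by exploiting the abstract interpolation machinery (Fact \ref{intro.f1}) together with the already-known one-sided embedding $\app^\alpha_q(L^p(S),\Phi)\hookrightarrow\app^\alpha_q(L^p(S),\cc)$ and the greedy/Temlyakov structure of $\Phi_p$. The Bernstein inequality $\operatorname{BI}(\caa,S,p,\tau)$ is, by design, the statement that the norm $\|\cdot\|_{\app^\beta_\tau(L^p(S),\Phi)}$ is controlled on $\Phi$-dictionary elements by $2^{n\beta}\|\cdot\|_{L^p}$; this is precisely a Bernstein-type inequality for the \emph{other} dictionary $\cc$ relative to the endpoint space $\app^\beta_\tau(L^p(S),\Phi)$. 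So the whole theorem should reduce to a Jackson--Bernstein duality, read through the two dictionaries.

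First I would set up the companion Jackson inequality. Since $\Phi\subset\Sigma^{\cc}_\nu$, every $\Phi$-element is a $\nu$-term $\cc$-combination, so $\sigma_{\nu n}(x,\cc)\lesssim\sigma_n(x,\Phi)$; this gives at once the easy embedding in (A) and (B), namely $\app^\alpha_q(L^p(S),\Phi)\hookrightarrow\app^\alpha_q(L^p(S),\cc)$ for all $\alpha,q$. For part (B), assuming $\operatorname{BI}(\caa,S,p,\tau)$, I would establish the reverse embedding for $0<\alpha<\beta$. The Bernstein inequality says that $\Sigma^{\cc}_{2^n}\cap L^p(S)$ sits inside $\app^\beta_\tau(L^p(S),\Phi)$ with norm at most $C2^{n\beta}\|\cdot\|_{L^p}$ (this is where I would need to check that the Bernstein hypothesis, stated for $g\in\Sigma^{\Phi}_{2^n}$, transfers to $\cc$-combinations — using that each $\cc$-element's $L^p$-best approximation can be realized greedily in $\Phi$). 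Feeding this Bernstein estimate into the standard machine (\cite[Chapter 7]{constr.approx}) that turns a Jackson inequality for $\app^\beta_\tau(L^p(S),\Phi)$ plus a Bernstein inequality for $\Sigma^{\cc}$ into a characterization of $\app^\alpha_q(L^p(S),\cc)$ as the interpolation space $(L^p(S),\app^\beta_\tau(L^p(S),\Phi))_{\alpha/\beta,q}$, and then invoking Fact \ref{intro.f1} to identify that interpolation space with $\app^\alpha_q(L^p(S),\Phi)$, yields $\app^\alpha_q(L^p(S),\cc)=\app^\alpha_q(L^p(S),\Phi)$.

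For part (A), I would argue contrapositively and quantitatively. If the spaces agreed for some $\alpha>\beta$, $0<q\le\infty$, then by the closed-graph / uniform-boundedness principle applied to finitely supported elements, the identity $\app^\alpha_q(L^p(S),\cc)=\app^\alpha_q(L^p(S),\Phi)$ forces a Bernstein-type control on $\Sigma^{\cc}_{2^n}$ in the $\app$-norm; specializing to $g\in\Sigma^\Phi_{2^n}\subset\Sigma^{\cc}_{2^n}$ and using the coefficient description of $\app^\alpha_q(L^p(S),\Phi_p)$ from Fact \ref{intro.f3} would recover exactly $\operatorname{BI}(\caa,S,p,\tau)$, contradicting its failure. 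Hence the spaces must differ; the one-sided embedding still holds by the Jackson step above. The delicate point is to show they differ for \emph{every} $\alpha>\beta$ and \emph{every} $q$ from the single failure at the endpoint $(\beta,\tau)$, which I would handle by a reiteration argument: equality at one admissible $(\alpha,q)$ propagates via Fact \ref{intro.f1} down to the endpoint scale, again contradicting the failure of the endpoint Bernstein inequality.

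The main obstacle I anticipate is the transfer of the Bernstein inequality between the two dictionaries and the careful bookkeeping of $\nu$-versus-$2^n$ term counts. Specifically, the hypothesis is phrased for $g\in\Sigma^\Phi_{2^n}$, whereas the embedding $\app^\alpha_q(L^p(S),\cc)\hookrightarrow\app^\alpha_q(L^p(S),\Phi)$ requires estimating $\Phi$-approximation errors of genuine $\cc$-combinations, which need not lie in any $\Sigma^\Phi_{m}$ of comparable cardinality a priori. Bridging this gap is exactly where the greedy and $p$-Temlyakov properties of $\Phi_p$ (Section \ref{sec:greedy}, and the results of \cite{part1}) do the heavy lifting: they let me replace best $\cc$-approximants by greedy $\Phi$-approximants at comparable cost, so that the Bernstein hypothesis on $\Sigma^\Phi$ becomes usable against arbitrary $\cc$-combinations. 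Once that equivalence of approximation errors (up to a fixed dilation of the term count) is in hand, the interpolation identification and the contrapositive for (A) follow from the standard theory without further difficulty.
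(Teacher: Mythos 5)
First, a point of reference: this paper does not prove Theorem~\ref{bernst.equiv} at all --- it is quoted verbatim from the companion paper \cite[Theorem~2.6]{part2}. That said, the skeleton of your argument is the standard (and correct) route, and it is essentially the one used there: for (B), the Jackson inequality $\sigma_{\nu n}(f,\cc)\leq\sigma_n(f,\Phi)\lesssim n^{-\beta}\|f\|_{\app_\tau^\beta(L^p(S),\Phi)}$ coming from $\Phi\subset\Sigma_\nu^{\cc}$, plus the Bernstein inequality $\operatorname{BI}$ for the dictionary $\cc$ relative to $Y=\app_\tau^\beta(L^p(S),\Phi)$, fed into \cite[Chapter~7, Theorem~9.1]{constr.approx} and then Fact~\ref{intro.f1}; for (A), propagation of a hypothetical equality at a single $(\alpha_0,q_0)$ with $\alpha_0>\beta$ down to the endpoint $(\beta,\tau)$ via Fact~\ref{intro.f1} and the closed graph theorem, after which the trivial Bernstein estimate $\|g\|_{\app_\tau^\beta(L^p(S),\cc)}\lesssim 2^{n\beta}\|g\|_p$ for $g\in\Sigma_{2^n}^{\cc}$ yields $\operatorname{BI}$ and a contradiction. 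Your reiteration step in (A) is exactly right and is the part one cannot skip: testing the equality directly at $\alpha_0>\beta$ only gives the useless bound $2^{n\alpha_0}\|g\|_p$.

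The genuine problem is the step you yourself flag as the main obstacle: the ``transfer of the Bernstein inequality between the two dictionaries.'' Your proposed mechanism --- that the greedy and $p$-Temlyakov properties let you ``replace best $\cc$-approximants by greedy $\Phi$-approximants at comparable cost,'' so that a hypothesis on $\Sigma_{2^n}^{\Phi}$ becomes usable against $\Sigma_{2^n}^{\cc}$ --- is false and would prove far too much. A single dictionary element $f\charfun_A\in\cc$ has a $\Phi$-expansion supported along the entire chain of atoms containing $A$; it lies in no $\Sigma_m^{\Phi}$ with controlled $m$, and whether $\|f\charfun_A\|_{\app_\tau^\beta(L^p(S),\Phi)}$ is controlled by $\|f\charfun_A\|_p$ is precisely the geometric condition $\operatorname{w2}^*$ of Definition~\ref{def:w2star}, which the example in Section~\ref{sec:def} (the function $f=\charfun_{X_0\setminus X_n}$) shows can fail. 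If your replacement were always possible, the two approximation scales would always coincide, contradicting part (A) itself. The resolution is that the printed Definition of $\operatorname{BI}$ is a misprint: the quantifier should read $g\in\Sigma_{2^n}^{\cc}$ (and $f$ should be $g$); for $g\in\Sigma_{2^n}^{\Phi}$ the stated inequality is trivially true for any dictionary, by \eqref{eq:special_beta} or directly from $\sigma_{2^n}(g,\Phi)=0$, and the theorem would be vacuous. This corrected reading is forced by the way $\operatorname{BI}$ is used later (e.g.\ in the proof of Theorem~\ref{thm:embed_var_app}(B), where it is combined with $\sigma_n(f,\cc)$ and \cite[Chapter~7, Theorem~5.1]{constr.approx}). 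With $\operatorname{BI}$ read as a statement about $\Sigma_{2^n}^{\cc}$, no transfer is needed, the questionable step disappears, and the rest of your argument closes. A minor bookkeeping slip elsewhere: $\Sigma_{2^n}^{\Phi}\subset\Sigma_{\nu 2^n}^{\cc}$, not $\Sigma_{2^n}^{\cc}$, which harmlessly shifts constants.
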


The main result of \cite{part2} is that in case of a binary filtration, the Bernstein inequality $\operatorname{BI}(\mathscr A,S,p,\tau)$ is equivalent 
to the geometric condition $\operatorname{w2^*}(\mathscr A,S,p,\tau)$, defined as follows.

\begin{defn} \label{def:w2star}
The condition  
$\operatorname{w2}^*(\mathscr A,S,p,\tau)$ 
is satisfied if
there exist a number $\rho\in (0,1)$ and a constant $M$ such that 
for each finite chain $X_0 \supset X_1 \supset \cdots \supset X_n$ of atoms in $\mathscr A$
with $|X_n|\geq \rho |X_0|$ and $X_j\in \{X_{j-1}',X_{j-1}''\}$ for each $j$, we have the inequality
\begin{equation}\label{eq:introw2s}
\Big( \sum_{i=1}^n \| f \charfun_{X_{i-1}\setminus X_i} ) \|_p^\tau \Big)^{1/\tau} 
\leq M \| f \charfun_{X_0\setminus X_n} \|_p,\qquad f\in S.
\end{equation}

\end{defn}

In order to illustrate condition~$\operatorname{w2}^*$, we consider the piecewise constant 
case $S=\operatorname{span}\{\charfun_\Omega\}$ and give an explicit example 
why $\app_\tau^\beta(L^p(S),\cc)$ cannot be continuously embedded in 
$\app_\tau^\beta(L^p(S),\Phi)$ for $\beta = 1/\tau - 1/p$ 
when condition $\operatorname{w2}^*(\mathscr A,S,p,\tau)$ is not 
satisfied.

Note that in case of $S=\operatorname{span}\{\charfun_\Omega\}$, inequality \eqref{eq:introw2s} takes the form
\begin{equation}\label{eq:example}
\Big( \sum_{i=1}^n | X_{i-1}\setminus X_i |^{\tau/p} \Big)^{1/\tau} 
\leq M |X_0\setminus X_n|^{1/p} .
\end{equation}

Assume that $\operatorname{w2}^*(\mathscr A,S,p,\tau)$ is not satisfied for
some parameters $1<p<\infty$, $0<\tau<p$ and set $\beta = 1/\tau - 1/p$. Thus, for a parameter $\rho<1$ sufficiently large, 
and for every $N\in\mathbb N$,
there exists a chain $\mathscr X = (X_j)_{j=0}^n$ of atoms
with $|X_n|\geq \rho |X_0|$ 
satisfying
\begin{equation}\label{eq:not_bernstein}
 \Big(\sum_{i=0}^{n-1} |X_i'|^{\tau/p}\Big)^{1/\tau}   = \Big(\sum_{i=0}^{n-1} \| \charfun_{X_i'} \|_p^\tau\Big)^{1/\tau} \geq N \| \charfun_{X_0\setminus X_n}\|_p = N |X_0\setminus X_n|^{1/p}.
\end{equation}
In this case, the orthonormal system $\Phi=(\varphi_j)$ is a generalized Haar 
system. We denote by $(h_i)_{i=0}^{n-1}$ the subset of $\Phi$ with 
$\operatorname{supp} h_i = X_i$ for $i=0,\ldots,n-1$.
It is easy to see that the Haar functions $(h_i)$ satisfy
\begin{align*}
    |h_i| \simeq |X_i'|^{-1/2}\text{ on }X_i'\qquad\text{and}\qquad
    |h_i| \simeq |X_i'|^{1/2} /|X_i| \text{ on }X_i''.
\end{align*}
This implies $\|h_i\|_p \simeq |X_i'|^{1/p - 1/2}$. Thus, the 
$p$-normalized version $h_i^p = h_i / \|h_i\|_p$ satisfies 
\begin{align*}
    |h_i^p| \simeq |X_i'|^{-1/p}\text{ on }X_i'\qquad\text{and}\qquad
    |h_i^p| \simeq |X_i'|^{1-1/p} /|X_i| \text{ on }X_i''.
\end{align*}
By \eqref{eq:special_beta}, $\| g\|_{\mathcal A_\tau^\beta(L^p(S),\Phi)} \simeq \big(\sum_j |c_j|^\tau)^{1/\tau}$
for $g = \sum_j c_jh_j^p$.
Note that  $c_j \simeq  \langle g, h_j^{p'}\rangle$ with $p' = p/(p-1)$ being the dual exponent 
to $p$.
The above estimates imply $| \int_{X_i'} h_i^{p'} | \simeq |X_i'|^{1/p}$ and, by orthogonality 
also  $| \int_{X_i''} h_i^{p'} | \simeq |X_i'|^{1/p}$.
Letting $Y= X_i'' \setminus X_n$ (and thus $Y\cup X_i' = X_i\setminus X_n$), we get
\[
\Big| \int_Y h_i^{p'} \Big| \lesssim |Y| \frac{|X_i'|^{1/p}}{|X_i|}.
\]
Since we assumed $\rho$ sufficiently large, we obtain $| \int_{X_0\setminus X_n} h_i^{p'} |
\simeq |X_i'|^{1/p}$.

Now, we determine the norm of the function $f := \charfun_{X_0\setminus X_n}$
in the  approximation spaces $\app_\tau^\beta$ with the two dictionaries $\cc$ 
and $\Phi$. Note first that $f$ is the difference of two functions in the dictionary $\cc$.
Therefore, $\| f\|_{\app_\tau^\beta(L^p(S),\cc)} \simeq \|f\|_p = |X_0 \setminus X_n|^{1/p}$.
Next, we get by the above remarks
\[
    \| f \|_{\app_\tau^\beta(L^p(S),\Phi)} \simeq \Big(\sum_{i=0}^{n-1} |\langle f,h_i^{p'}\rangle|^\tau\Big)^{1/\tau}
    \simeq \Big(\sum_{i=0}^{n-1} |X_i'|^{\tau/p}\Big)^{1/\tau}.
\]
Thus, by assumption \eqref{eq:not_bernstein}, we obtain 
\[
\| f \|_{\app_\tau^\beta(L^p(S),\Phi)} \gtrsim N 
 \| f \|_{\app_\tau^\beta(L^p(S),\cc)},
\]
for all $N\in\mathbb N$ with a uniform constant. This implies that 
$\mathcal A_{\tau}^\beta(L^p(S),\cc)$ cannot
be continuously embedded in $\mathcal A_{\tau}^\beta(L^p(S),\Phi)$ if we
assume that condition $\operatorname{w2}^*(S,\mathscr A,p,\tau)$ is not satisfied.

\section{Variation spaces and their properties \label{txt.s1}} 
We continue to use the notation introduced in Section~\ref{sec:def}
and assume we are given a $\nu$-ary filtration $(\mathscr F_n)$
for some integer $\nu\geq 2$.

\subsection{Definitions}

Let us now fix $f\in L^p(\Omega)$  and $R\in\cf$. Then we define
\[
E_p(f,R) = \inf\{ \Vert f-g\Vert_{L^p(R)} : g\in S_R\}.
\]

Now, we make the following Definition \ref{def.var}, 
which is an abstract version of the space of functions of bounded (ring)
 variation in \cite[Section 2]{hky.2000}:

\begin{defn}
\label{def.var}
Let $0<\sigma<p<\infty$ and let $f\in L^p(S)$. Then
\[
|f|_{V_{\sigma,p} } := \sup_\Pi \left( \sum_{R\in\Pi} E_p(f,R)^\sigma \right)^{1/\sigma}.
\]
where
$\sup$ is taken over all finite collections $\Pi$ of disjoint atoms or rings.
 Additionally
\[
V_{\sigma,p} := V_{\sigma,p}(S) := \{ f\in L^p(S) : \|f\|_{V_{\sigma,p}} := \| f\|_p + |f|_{V_{\sigma,p}(S) } < \infty \}.
\]
\end{defn}

\subsection{\label{ssq.1} Interpolation results}
Our interest in the space $V_{\sigma,p}(S)$ comes from Theorem~\ref{theo.101}, which is 
a generalization of  \cite[Corollary 8]{hky.2000}.

\begin{theo}
\label{theo.101}
Let $0< \sigma < p < \infty$ and $\beta = 1/\sigma - 1/p$. Then for all $0< \alpha < \beta$ and $0 < q \leq \infty$,
$$
  \app^\alpha_q(L^p(S), \cc) = (L^p(S), V_{\sigma,p}(S))_{\alpha/\beta,q}.
$$
\end{theo}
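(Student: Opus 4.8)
The plan is to establish the identity $\app^\alpha_q(L^p(S), \cc) = (L^p(S), V_{\sigma,p}(S))_{\alpha/\beta,q}$ by combining a Jackson-type direct estimate with a Bernstein-type inverse estimate relating the dictionary $\cc$ to the space $V_{\sigma,p}(S)$, and then invoking the standard interpolation machinery of Fact~\ref{intro.f1}. By Fact~\ref{intro.f1} applied with $\beta$ in the role of the larger smoothness index, it suffices to identify the endpoint space: I would aim to show that $V_{\sigma,p}(S)$ is, up to equivalent quasi-norms, the space $\app^\beta_\sigma(L^p(S), \cc)$ (or is sandwiched between the relevant approximation spaces so that the real interpolation spaces coincide). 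Concretely, once one proves the two embeddings $\app^\beta_\sigma(L^p(S), \cc) \hookrightarrow V_{\sigma,p}(S)$ and $V_{\sigma,p}(S) \hookrightarrow \app^\beta_q(L^p(S), \cc)$ for a suitable $q$, interpolation at parameter $\alpha/\beta$ collapses everything to the claimed statement, since interpolating $L^p(S)$ with either of the sandwiching approximation spaces yields the same family $\app^\alpha_q(L^p(S), \cc)$ by Fact~\ref{intro.f1}.

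The first key step is the Jackson (direct) estimate: given $f \in V_{\sigma,p}(S)$, I must build good $n$-term approximants from $\cc$ and control the error $\sigma_n(f,\cc)$ by $|f|_{V_{\sigma,p}}$. The natural construction, following \cite{hky.2000}, is to select the atoms/rings $R$ where the local best-approximation errors $E_p(f,R)$ are largest, approximate $f$ there by the minimizing $g \in S_R$, and estimate the tail. The bounded variation hypothesis $\sum_R E_p(f,R)^\sigma < \infty$ forces a decay of the decreasing rearrangement of the numbers $E_p(f,R)$, and summing the tail contribution in $L^p$ against this $\ell^\sigma$-bound produces the rate $\sigma_n(f,\cc) \lesssim n^{-\beta}|f|_{V_{\sigma,p}}$ with $\beta = 1/\sigma - 1/p$; this is exactly a weak-type / Hardy-inequality manipulation between $\ell^\sigma$ and the sequence of approximation errors. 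The second key step is the Bernstein (inverse) estimate: for $g \in \Sigma_n^{\cc}$ one bounds $|g|_{V_{\sigma,p}} \lesssim n^{\beta}\|g\|_p$ by noting that such $g$ is supported, in a suitable sense, on $O(n)$ atoms, so that in the supremum defining $|g|_{V_{\sigma,p}}$ only boundedly many rings contribute nontrivially on each piece, and on the remaining rings $E_p(g,R)=0$ because $g$ already lies in $S_R$ there.

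I expect the main obstacle to lie in the geometry of the rings in the supremum defining $|\cdot|_{V_{\sigma,p}}$, and in matching the Jackson-type selection of atoms/rings to the $n$-term structure of $\cc$. Unlike the dyadic-cube setting of \cite{hky.2000}, here the atoms come from an abstract $\nu$-ary filtration with no a priori metric or nesting regularity, and a ring $R = A \setminus B$ is a difference of two atoms whose measures are essentially unconstrained. The delicate point is to show that an arbitrary finite collection $\Pi$ of disjoint rings can be reorganized (via the tree structure of $\caa$) into a comparable collection adapted to the filtration, so that the local errors $E_p(f,R)$ telescope correctly along chains and the supremum over $\Pi$ is genuinely controlled by the approximation-theoretic quantities. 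Because these arguments run parallel to \cite{hky.2000} but require this filtration-theoretic bookkeeping, I would defer the full technical details of both the direct and inverse estimates to the appendix (as the authors announce for Section~\ref{ssq.1}), and in the main text only record the reduction to Fact~\ref{intro.f1} together with the statements of the two Jackson/Bernstein lemmas.
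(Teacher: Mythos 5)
Your proposal is correct and follows essentially the same route as the paper: the authors prove exactly the Jackson estimate $\sigma_n(f,\cc)\lesssim n^{-\beta}|f|_{V_{\sigma,p}}$ (via the superadditive set-function partition lemma of Cohen--DeVore--Petrushev--Xu applied to $\phi(K)=E_p(f,K)^p$) and the Bernstein estimate $|g|_{V_{\sigma,p}}\lesssim n^{\beta}\|g\|_p$ for $g\in\Sigma_n^{\cc}$ (via the counting Lemma~\ref{lem.bernst} showing at most one disjoint ring per dictionary element has $E_p\neq 0$, plus H\"older), and then invoke the standard Jackson--Bernstein interpolation theorem \cite[Chapter 7, Theorem 9.1]{constr.approx}, which is the packaged form of your sandwiching-plus-reiteration argument. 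The technical details are, as you anticipate, deferred to the appendix.
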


The proof of Theorem \ref{theo.101} relies on the following Jackson and Bernstein inequalities, which should be seen as a generalization of \cite[Corollary 7]{hky.2000}:

\begin{prop}\label{th.kfun.c} For $0 < \sigma < p < \infty$ and $\beta = 1/\sigma - 1/p$, 
  the following results are true.

(i) [Jackson inequality] There exists a constant $c$ such that  
 \[
 \sigma_n(f,\cc) \leq \frac{c}{n^\beta} |f|_{V_{\sigma,p}}, \qquad f\in V_{\sigma,p}(S),
 \]
 where $\sigma_n(f,\cc)$ is taken with respect to $L^p$-norm.

(ii) [Bernstein inequality]  There exists constant $c>0$ such that 
 \begin{equation}
 |f|_{V_{\sigma,p}(S)} \leq c\cdot n^\beta \Vert f\Vert_p,\qquad f\in\Sigma_n^{\cc}.
 \end{equation}
\end{prop}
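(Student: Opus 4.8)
The plan is to prove both inequalities from a single structural observation: the \emph{local $L^p$-error} $E_p(f,\cdot)^p$ is superadditive under subdivision. Indeed, if a ring or atom $R$ is split into disjoint measurable pieces $R=\bigsqcup_i R_i$, then choosing one common $g\in S$ on all of $R$ is more restrictive than choosing a separate best approximant on each $R_i$, whence
\[
\sum_i E_p(f,R_i)^p \le E_p(f,R)^p .
\]
I would also record the elementary dictionary translation used in both parts: a function agreeing with some element of $S$ on each set of a family of $m$ disjoint rings/atoms (and vanishing elsewhere) belongs to $\Sigma_{2m}^{\cc}$, since $g\charfun_{A\setminus B}=g\charfun_A-g\charfun_B$; conversely, every $f\in\Sigma_n^{\cc}$ can be reorganized, using the tree structure of $\caa$, as such a piecewise-$S$ function on at most $c(\nu)\,n$ disjoint rings/atoms (the $n$ chosen atoms cut $\bigcup A_i$ into $O(n)$ cells). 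Both reductions cost only a fixed multiplicative constant in the number of terms, which is harmless for the claimed $n^{\pm\beta}$ rates.

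For the Bernstein inequality (ii) I would argue as follows. Write $f\in\Sigma_n^{\cc}$ as a piecewise-$S$ function on $K\le c(\nu)n$ disjoint cells. Let $\Pi$ be any finite family of disjoint rings/atoms. A ring $R\in\Pi$ contained in a single cell has $E_p(f,R)=0$, so only the rings meeting at least two cells are active; each such ring strictly contains at least one cell boundary, and since the rings of $\Pi$ are disjoint while there are at most $K-1$ cell boundaries, at most $K$ rings of $\Pi$ are active. For each active $R$ the trivial bound $E_p(f,R)\le\|f\charfun_R\|_p$ holds, so Hölder's inequality with exponent $p/\sigma>1$ over the at most $K$ disjoint active rings gives
\[
\sum_{R\in\Pi}E_p(f,R)^\sigma \le K^{\,1-\sigma/p}\Big(\sum_{R\in\Pi}\|f\charfun_R\|_p^p\Big)^{\sigma/p}\le K^{\,1-\sigma/p}\|f\|_p^\sigma .
\]
Taking the supremum over $\Pi$ and raising to the power $1/\sigma$ yields $|f|_{V_{\sigma,p}}\le K^{\beta}\|f\|_p\le c\,n^{\beta}\|f\|_p$, since $(1-\sigma/p)/\sigma=1/\sigma-1/p=\beta$.

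The Jackson inequality (i) is the substantive direction, and here I would follow the stopping-time construction of \cite{hky.2000}. Set $V:=|f|_{V_{\sigma,p}}$ and $\gamma:=\sigma/p\in(0,1)$. For each dyadic level $2^{-k}$ let $\Pi_k$ be the partition into maximal atoms $A$ with $E_p(f,A)\le 2^{-k}V$; these refine as $k$ grows, and for any single $\Pi_k$ the estimate $\sum_{A\in\Pi_k}E_p(f,A)^p\le (2^{-k}V)^{p-\sigma}\sum_{A\in\Pi_k}E_p(f,A)^\sigma$ combined with the variation bound controls the $L^p$-error of the associated piecewise-$S$ approximant. The key quantitative input is that the variation bound limits, at each level, the number of rings carrying error of size $\approx 2^{-k}V$ by $O(2^{k\sigma})$; feeding this into a near-best adaptive partition (equivalently, greedily subdividing the cell of largest local error) and summing the per-level contributions geometrically -- this is exactly where $\sigma<p$ and the value $\beta=1/\sigma-1/p$ enter -- produces, for each $n$, a partition into $O(n)$ rings whose best piecewise-$S$ approximant $y$ satisfies $\|f-y\|_p\le c\,V n^{-\beta}$. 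Combined with the dictionary translation of the first paragraph this gives $\sigma_{c'n}(f,\cc)\le c\,V n^{-\beta}$, which is the assertion after reindexing.

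I expect the main obstacle to be precisely this bookkeeping in the Jackson step: simultaneously controlling the cardinality of the stopping-time partition and its $L^p$-error. A single threshold $\Pi_k$ is provably too crude, since it can spend far more than $2^{k\sigma}$ pieces on regions of negligible error, so the argument must allocate the $n$ available pieces adaptively across the error levels and then sum the level contributions, with the superadditivity of $E_p(f,\cdot)^p$ ensuring that refinement never increases the error budget. This level-counting and geometric summation is the heart of the \cite{hky.2000} argument and the reason its proof is deferred to the appendix.
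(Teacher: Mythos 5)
Your overall strategy matches the paper's (and that of \cite{hky.2000}): Bernstein via counting the rings on which the local error is nonzero followed by H\"older, and Jackson via a stopping-time partition whose cardinality is certified by the variation. The superadditivity of $E_p(f,\cdot)^p$ that you isolate at the outset is indeed the hypothesis the paper feeds into its key partition lemma. But both halves have a gap as written.

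For the Bernstein inequality, the counting step ``each active ring strictly contains at least one cell boundary, and there are at most $K-1$ cell boundaries, so at most $K$ rings of $\Pi$ are active'' is not an available argument here: $\Omega$ is an abstract probability space with no topology, so ``cell boundary'' is undefined, and the cells $A_i\setminus\bigcup_{A_j\subsetneq A_i}A_j$ are in general not rings (a ring is $A\setminus B$ for a \emph{single} pair of atoms), so your dictionary translation does not even produce them. The conclusion is nonetheless correct, and the paper obtains it differently: Lemma~\ref{lem.bernst} shows, by an exhaustive case analysis on the relative positions in the tree $\caa$ of the atom $A$ and the atoms defining $R$, that for a \emph{single} dictionary element $f\charfun_A$ at most one member of a disjoint family has $E_p(f\charfun_A,R)\neq 0$; since $E_p(\sum_i g_i,R)=0$ whenever each $E_p(g_i,R)=0$ (because $S_R$ is a linear space), summing over the $n$ terms of $g\in\Sigma_n^\cc$ gives $\card\{R\in\Pi:E_p(g,R)>0\}\leq n$, after which your H\"older step is exactly the paper's. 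You need this one-set-per-term lemma (or a rigorous substitute); the boundary heuristic is not one.

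For the Jackson inequality, the entire content is the construction, for each $m$, of a partition $\cpp_\varepsilon$ of $\Omega$ into $O(m)$ disjoint atoms or rings with $E_p(f,K)^p\leq\varepsilon:=m^{-p/\sigma}|f|_{V_{\sigma,p}}^p$ on each piece, and your sketch does not supply it. Moreover your diagnosis is off: you assert that a single threshold is ``provably too crude'' and propose a multi-level allocation with geometric summation over dyadic error levels, whereas the paper's proof uses exactly one threshold and no summation over levels. The cardinality control comes from the companion family in Theorem~\ref{app.0.theo.cdpx} (Theorem~6.1 of \cite{cdpx.1999}, adapted to $\nu$-ary filtrations): alongside the partition $\cpp_\varepsilon$ one constructs a \emph{disjoint} family $\tilde\cpp_\varepsilon$ with $E_p(f,K)^p>\varepsilon$ for every $K\in\tilde\cpp_\varepsilon$ and $\card\cpp_\varepsilon\leq 2\nu\,\card\tilde\cpp_\varepsilon$; the variation bound applied to $\tilde\cpp_\varepsilon$ forces $\card\tilde\cpp_\varepsilon\leq m$, hence $\card\cpp_\varepsilon\leq 2\nu m$ and $\sigma_{2\nu m}(f,\cc)^p\lesssim\card\cpp_\varepsilon\cdot\varepsilon\lesssim m^{-p\beta}|f|_{V_{\sigma,p}}^p$ in one step. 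Your legitimate worry --- that the naive ``maximal atoms with small error'' partition may have uncontrolled cardinality --- is precisely what this ring-based construction with its certified bad family resolves; without proving (or explicitly importing) that combinatorial lemma, the Jackson half remains unproved.
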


Theorem \ref{theo.101} follows by Proposition \ref{th.kfun.c}  via a standard argument, see \cite[Chapter 7, Theorem 9.1]{constr.approx}.
We give the proofs of  Proposition \ref{th.kfun.c} and Theorem~\ref{theo.101}  in  Appendix \ref{app.0.ss1}.

An additional tool in the proof of Proposition~\ref{th.kfun.c} is the following lemma, 
which is a variant -- or a special case -- of Lemma 13 in \cite{hky.2000}.
The  proof of Lemma \ref{lem.bernst} follows the lines of the proof of  Lemma 13 in \cite{hky.2000}
and the details can be found in Appendix~\ref{app.0.ss1} as well.

\begin{lem}\label{lem.bernst}Let $f\in S_A$, $A\in{\mathscr A}$. Let ${\mathcal P}$ be a family of pairwise disjoint atoms or rings. Then
\begin{equation*}
\card\{ R\in{\mathcal P} : E_p(f,R) \neq 0 \} \leq 1.
\end{equation*}
\end{lem}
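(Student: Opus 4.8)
The plan is to reduce the statement to an elementary property of how the atom $A$ carrying $f$ can straddle the boundary of a disjoint ring or atom $R$, and then exploit the nestedness of atoms in the filtration. Throughout write $f = g\charfun_A$ with $g\in S$, so that on any measurable set $R$ we have $f\charfun_R = g\charfun_{A\cap R}$. First I would record a necessary condition for $E_p(f,R)\neq 0$: if $|R\cap A| = 0$ then $f\charfun_R = 0\in S_R$, while if $R\subseteq A$ up to a null set then $f\charfun_R = g\charfun_R\in S_R$; in either case $E_p(f,R)=0$. Hence $E_p(f,R)\neq 0$ forces $R$ to meet both $A$ and its complement in positive measure, i.e. $|R\cap A|>0$ and $|R\setminus A|>0$. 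Writing every member of $\mathcal P$ as $R = B\setminus C$ with atoms $C\subsetneq B$ (allowing $C=\emptyset$ when $R=B$ is an atom) and using that atoms are pairwise nested or disjoint, these two conditions translate into $A\subsetneq B$ and $A\not\subseteq C$ (equivalently $|A\setminus C|>0$).

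Next I would argue by contradiction. Suppose $R_1 = B_1\setminus C_1$ and $R_2 = B_2\setminus C_2$ are two distinct, hence disjoint, members of $\mathcal P$ with $E_p(f,R_i)\neq 0$. By the previous step the outer atoms $B_1,B_2$ both strictly contain $A$, so $A\subseteq B_1\cap B_2$ has positive measure; thus $B_1,B_2$ are not disjoint and therefore nested, say $B_1\subseteq B_2$. Then $R_2\cap B_1 = B_1\setminus C_2$, and since $R_1\subseteq B_1$, disjointness $R_1\cap R_2=\emptyset$ gives $(B_1\setminus C_1)\cap(B_1\setminus C_2) = B_1\setminus(C_1\cup C_2) = \emptyset$, i.e. $B_1\subseteq C_1\cup C_2$ modulo null sets. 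In particular $A\subseteq C_1\cup C_2$.

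The final step is the combinatorial claim that an atom $A$ contained in $C_1\cup C_2$ must lie in $C_1$ or in $C_2$. Testing $A$ for nestedness against each $C_i$, if $A\not\subseteq C_1$ and $A\not\subseteq C_2$ then (using that $A$ cannot be disjoint from a set whose union with another covers it) one is forced into $C_1,C_2\subsetneq A$ with $A = C_1\cup C_2$; but this contradicts $A\subsetneq B_1\subseteq C_1\cup C_2$. Hence $A\subseteq C_1$ or $A\subseteq C_2$, which contradicts $A\not\subseteq C_1$ and $A\not\subseteq C_2$ from the first step. This contradiction shows that at most one $R\in\mathcal P$ can satisfy $E_p(f,R)\neq 0$.

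I expect the main obstacle to be the bookkeeping in the last two paragraphs: unifying atoms and rings under a single representation $R=B\setminus C$, and verifying the nestedness case distinctions behind the implication $A\subseteq C_1\cup C_2 \Rightarrow A\subseteq C_1 \text{ or } A\subseteq C_2$, all while interpreting inclusions modulo null sets so that the positive-measure hypotheses are invoked correctly. The approximation-theoretic input (the characterization of when $E_p(f,R)=0$) is light; the essence is the tree geometry of the filtration.
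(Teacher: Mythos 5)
Your proof is correct. It differs in organization from the paper's argument, which is worth noting. The paper fixes one set $R\in\mathcal P$ with $E_p(f,R)\neq 0$, enumerates the possible positions of the supporting atom relative to $R$ (either the supporting atom sits inside the ring, so that every other member of $\mathcal P$ misses it entirely, or the ring's inner atom is strictly inside the supporting atom, which is strictly inside the outer atom), and then runs a second exhaustive case analysis over all configurations of an arbitrary other atom or ring $P$ disjoint from $R$ to verify $E_p(f,P)=0$ in each case. You instead distill a single necessary condition -- writing $R=B\setminus C$ with $C$ possibly empty, $E_p(f,R)\neq 0$ forces $A\subsetneq B$ and $A\not\subseteq C$ -- and then argue symmetrically: both outer atoms contain $A$, hence are nested, disjointness forces $B_1\subseteq C_1\cup C_2$, and the fact that an atom covered by two atoms must either lie in one of them or equal their union (the latter excluded by $A\subsetneq B_1\subseteq C_1\cup C_2$) yields the contradiction. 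Your route eliminates the second layer of case distinctions and treats atoms and rings uniformly via $C=\emptyset$, at the cost of the small combinatorial lemma in your last step; the paper's version is longer but checks each configuration explicitly. Both rest on the same geometric input, namely that any two atoms of the filtration are nested or disjoint.
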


Next we give an equivalent form of the $K$-functional for the interpolation pair appearing in Theorem \ref{theo.101}.
We discuss such a form and thereby follow paper \cite{hky.2000}.

The following Definition \ref{def.108} should be compared to an analogous definition in \cite{hky.2000}, cf. page 453 of \cite{hky.2000}:
\begin{defn}\label{def.108}
  For $0<\sigma<p<\infty$, let the modulus of smoothness be given by
\begin{equation*}
\cw_S(f,t)_{\sigma,p} = \sup_{\Pi} \min\Big(t,\frac{1}{\card\Pi}\Big)^\beta \left( \sum_{R\in\Pi} E_p(f,R)^\sigma\right)^{1/\sigma},
\end{equation*}
where $\beta = 1/\sigma - 1/p$ and the supremum is taken over all finite partitions $\Pi$ of $\Omega$ into disjoint atoms or rings.
\end{defn}

This modulus of smoothness is related to Theorem \ref{theo.101} via the following Theorem~\ref{theo.234}, which corresponds to  \cite[Theorem~5]{hky.2000}.

\begin{theo}\label{theo.234} For all $0<\sigma<p<\infty$, there exist constants $c_1,c_2>0$ such that for all $f\in L^p(S)$
\begin{equation*}
c_1  \cw_S(f,t)_{\sigma,p} \leq K(f,t^\beta, L^p(S), V_{\sigma,p}) \leq c_2 \cw_S(f,t)_{\sigma,p}.
\end{equation*}
\end{theo}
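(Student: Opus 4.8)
The plan is to prove the two inequalities $c_1\cw_S(f,t)_{\sigma,p}\le K(f,t^\beta,L^p(S),V_{\sigma,p})$ and $K(f,t^\beta,L^p(S),V_{\sigma,p})\le c_2\cw_S(f,t)_{\sigma,p}$ separately, following the scheme of the proof of \cite[Theorem~5]{hky.2000}. The first inequality is a soft decomposition argument, whereas the second requires constructing, for each $t$, an explicit near-optimal competitor $g\in V_{\sigma,p}(S)$ realizing the $K$-functional; the delicate point will be the estimate of its seminorm $|g|_{V_{\sigma,p}}$.

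For the lower bound, fix $g\in V_{\sigma,p}(S)$ and a finite partition $\Pi$ of $\Omega$ into disjoint atoms or rings; write $N=\card\Pi$ and $m=\min(t,1/N)$. For each $R\in\Pi$ the triangle inequality in $L^p(R)$ gives $E_p(f,R)\le \|f-g\|_{L^p(R)}+E_p(g,R)$. Summing in $\ell^\sigma$ and applying the ($\sigma$-dependent) quasi-triangle inequality, I would split the quantity $\big(\sum_{R}E_p(f,R)^\sigma\big)^{1/\sigma}$ into two parts. For the part carrying $\|f-g\|_{L^p(R)}$, the embedding $\ell^\sigma\hookrightarrow\ell^p$ over $N$ indices yields $\big(\sum_R\|f-g\|_{L^p(R)}^\sigma\big)^{1/\sigma}\le N^\beta\big(\sum_R\|f-g\|_{L^p(R)}^p\big)^{1/p}\le N^\beta\|f-g\|_p$, and since $m\le 1/N$ one has $m^\beta N^\beta\le 1$; for the part carrying $E_p(g,R)$, the definition of the seminorm gives $\big(\sum_R E_p(g,R)^\sigma\big)^{1/\sigma}\le |g|_{V_{\sigma,p}}$, while $m\le t$ gives $m^\beta\le t^\beta$. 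Hence $m^\beta\big(\sum_R E_p(f,R)^\sigma\big)^{1/\sigma}\lesssim \|f-g\|_p+t^\beta|g|_{V_{\sigma,p}}$, and taking the supremum over $\Pi$ and then the infimum over $g$ proves the claim.

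For the upper bound I would follow the adaptive construction of \cite{hky.2000}. Fix $t$ and run a greedy, threshold-based subdivision along the filtration starting from $\Omega$: an atom is subdivided as long as its local error $E_p(f,\cdot)$ exceeds a threshold calibrated to $t$ (equivalently, until a cell count $N\sim 1/t$ is reached), producing a finite atom-partition $\Pi^*=\{R_1,\dots,R_N\}$. Define $g$ by letting $g|_{R_j}$ be the best $L^p(R_j)$-approximation to $f$ from $S_{R_j}$; then $g\in L^p(S)$ and, being locally optimal, $\|f-g\|_p=\big(\sum_j E_p(f,R_j)^p\big)^{1/p}$. Calibrating the stopping rule so that the cells are organized into dyadic error levels, each level's contribution to $\|f-g\|_p$ is matched against a term $\min(t,1/\card\Pi)^\beta\big(\sum_{R\in\Pi}E_p(f,R)^\sigma\big)^{1/\sigma}$ appearing in $\cw_S(f,t)_{\sigma,p}$, which should give $\|f-g\|_p\lesssim\cw_S(f,t)_{\sigma,p}$.

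The main obstacle is the estimate $t^\beta|g|_{V_{\sigma,p}}\lesssim\cw_S(f,t)_{\sigma,p}$, since the crude Bernstein inequality of Proposition~\ref{th.kfun.c}(ii) only bounds $|g|_{V_{\sigma,p}}$ by $N^\beta\|g\|_p\sim t^{-\beta}\|f\|_p$, which is far too large. Instead I would estimate $\big(\sum_{R'\in\Pi'}E_p(g,R')^\sigma\big)^{1/\sigma}$ for an arbitrary disjoint family $\Pi'$ directly, exploiting that $g$ is piecewise in $S$ with respect to $\Pi^*$: if a test cell $R'$ is contained in a single $R_j$ then $g$ coincides on $R'$ with an element of $S$, whence $E_p(g,R')=0$, so only the cells $R'$ straddling several $R_j$ contribute. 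Lemma~\ref{lem.bernst}, applied to each local piece $g\charfun_{R_j}\in S_{R_j}$, localizes these straddling errors---on each stopping cell at most one member of $\Pi'$ carries nonzero error---and their sizes are controlled by the local quantities $E_p(f,R_j)$. Bookkeeping this along the tree structure of the filtration, counting how many test cells can straddle each stopping cell, should reduce the sum to a constant multiple of $\big(\sum_j E_p(f,R_j)^\sigma\big)^{1/\sigma}$, which together with $N\sim 1/t$ yields the desired bound. Combining the two directions completes the proof; the genuinely hard part, where the filtration geometry and Lemma~\ref{lem.bernst} enter essentially, is this control of $|g|_{V_{\sigma,p}}$.
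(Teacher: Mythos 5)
Your overall architecture coincides with the paper's: the lower bound is exactly the paper's argument (quasi-subadditivity of $\cw_S$ in $f$ together with the two elementary bounds $\cw_S(h,t)_{\sigma,p}\le\|h\|_p$ and $\cw_S(h,t)_{\sigma,p}\le t^\beta|h|_{V_{\sigma,p}}$, i.e.\ Fact~\ref{fact.kfun.a}), and your upper bound is the paper's Jackson inequality $\sigma_n(f,\cc)\lesssim\cw_S(f,1/n)_{\sigma,p}$ (proved via the adaptive-partition Theorem~\ref{app.0.theo.cdpx}) combined with a Bernstein-type bound on $|g|_{V_{\sigma,p}}$ for $g\in\Sigma_n^\cc$ based on the counting Lemma~\ref{lem.bernst}. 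So you have identified the right ingredients.

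However, the step you yourself single out as the crux is mis-described in a way that would not go through as written. You claim that for a test family $\Pi'$ the straddling errors $E_p(g,R')$ are ``controlled by the local quantities $E_p(f,R_j)$'' and that the sum reduces to a constant multiple of $\bigl(\sum_j E_p(f,R_j)^\sigma\bigr)^{1/\sigma}$. This purely local reduction fails: if $R'$ meets several stopping cells $R_{j}$, then $E_p(g,R')$ measures the mutual discrepancy of the \emph{different} local approximants $g_{j}$ on $R'$, and $\|g_{j_0}-g_{j}\|_{L^p(R_{j}\cap R')}$ is not controlled by $E_p(f,R_{j})$ alone, since $g_{j_0}$ has no approximation property outside $R_{j_0}$. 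The correct route (which is what Fact~\ref{fact.kfun.b}(ii) encodes) is to pass through $f$: $E_p(g,R')\lesssim E_p(f,R')+\|f-g\|_{L^p(R')}$, then use Lemma~\ref{lem.bernst} only to bound the number of contributing $R'$ by $O(N)$, apply H\"older over those $O(N)$ cells to get the extra term $N^\beta\|f-g\|_p$, and finally re-invoke the Jackson bound $\|f-g\|_p\lesssim\cw_S(f,t)_{\sigma,p}$ to absorb it. In other words, the bound is $t^\beta|g|_{V_{\sigma,p}}\lesssim\cw_S(f,t)_{\sigma,p}+\|f-g\|_p$, not a local expression in the $E_p(f,R_j)$; this is precisely the chain $n^{-\beta}|g|_{V_{\sigma,p}}\lesssim\cw_S(g,1/n)\lesssim\cw_S(f,1/n)+\cw_S(g-f,1/n)\lesssim\cw_S(f,1/n)+\|f-g\|_p$ used in the paper. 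A second, smaller gap: your stopping rule must be the two-family construction of Theorem~\ref{app.0.theo.cdpx} (a partition $\cpp_\varepsilon$ with small errors together with a disjoint family $\tilde\cpp_\varepsilon$ of cells with error $>\varepsilon$ satisfying $\card\cpp_\varepsilon\le 2\nu\,\card\tilde\cpp_\varepsilon$); merely ``subdividing until the error drops below a threshold'' does not by itself yield the cardinality bound $N\lesssim 1/t$, which is obtained by testing the modulus against $\tilde\cpp_\varepsilon$. With these two repairs your plan becomes the paper's proof.
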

The details of the proof can be found in Appendix~\ref{sec:appendix_K_equiv}.

\subsection{Comparison of variation spaces corresponding to binary 
and $\nu$-ary refinements}

In this section we discuss the differences between variation spaces 
corresponding to binary and $\nu$-ary refinement for $\nu > 2$. 
It turns out that the resulting variation spaces are in general not the same. 
In order to show this, we provide an example defined on 
some self-similar fractal, equipped with the corresponding Hausdorff measure.
This example is given for any  $\nu\geq 4$.
The special geometry of this example and the corresponding function $f$ 
that are contained in one variation space, but not in the other, is chosen such that it is easy 
to count non-overlapping  rings $R$ of fixed size for which $E_p(f,R) \neq 0$.

For an example defined on the simpler set $[0,1)^2$ equipped with the 
Lebesgue measure, we refer to Appendix~\ref{sec:tensor_example}.
We remark that the example given there  works
only for $\nu = n_1 \cdot n_2$ with integers $n_1,n_2\geq 2$. 

We now present the announced
example involving self-similar fractals. For more information in that 
direction, see for instance the book \cite{Falconer1986} by K.J.~Falconer.
Fix $n \geq 2$ and choose an integer $\nu$ in the closed, non-empty interval $[n+2,n^2]$.
For $j \geq 1$, let
$$
\Gamma_j  =  \{ \vk = (k_1, \ldots, k_j): 1 \leq k_i \leq \nu\},
\quad
\Gamma_j^*  =  \{ \vk = (k_1, \ldots, k_j): 1 \leq k_i \leq n\}.
$$
Moreover, let
$$
Q = [0,1)^2, \quad \Delta = \{ (t,t): 0 \leq t < 1\},
$$
$$
\Lambda = \Big\{ \Big( \frac{i_1}{n}, \frac{i_2}{n} \Big): 0 \leq i_1, i_2 \leq n-1 \Big\}.
$$
Fix $\nu$ distinct points $\xi_0, \ldots, \xi_{\nu-1} \in \Lambda $ by the following rule: first, let $\xi_i = (i/n, i/n)$ for $i = 0, \ldots, n-1$,  
and choose $\nu-n$ more arbitrary points 
$\xi_{n},\ldots, \xi_{\nu-1} \in \Lambda $ so that the set $\{\xi_0, \ldots, \xi_{\nu-1}\}$ consists of $\nu$ points.

%
%
%

\begin{figure}
\begin{tikzpicture}[x=10cm,y=10cm]
\draw[draw=black,dashed] (0,0) rectangle (1,1);
\draw[draw=black] (0,0) -- (1,1);
\node[anchor=north west] (A) at (1/2,1/2)  {$\Delta$};
\filldraw[black] (0,0) circle (2pt) node[anchor=north west]{$\xi_0$};
\filldraw[black] (1/3,1/3) circle (2pt) node[anchor=north west]{$\xi_1$};
\filldraw[black] (2/3,2/3) circle (2pt) node[anchor=north west]{$\xi_2$};
\filldraw[black] (0,2/3) circle (2pt) node[anchor=north west]{$\xi_3$};
\filldraw[black] (2/3,1/3) circle (2pt) node[anchor=north west]{$\xi_4$};

\draw[draw=black,thick] (0,0) rectangle (1/3,1/3);
\draw[draw=black,thick] (1/3,1/3) rectangle (2/3,2/3);
\draw[draw=black,thick] (2/3,2/3) rectangle (1,1);
\draw[draw=black,thick] (2/3,1/3) rectangle (1,2/3);
\draw[draw=black,thick] (0,2/3) rectangle (1/3,1);

\node[anchor = south east] (A)  at (1/3,2/3) {$g_3(Q)$};
\node[anchor = south east] (A)  at (1,2/3) {$g_2(Q)$};
\node[anchor = south east] (A)  at (2/3,1/3) {$g_1(Q)$};
\node[anchor = south east] (A)  at (1/3,0) {$g_0(Q)$};
\node[anchor = south east] (A)  at (1,1/3) {$g_4(Q)$};

\end{tikzpicture}
\caption{The unit square $Q$, the points $(\xi_i)$, and the functions $g_i$ in the case $\nu = 5 = n+2$.  }
    \label{fig:counter2_alt}
\end{figure}
Consider similitudes  $g_i :\rr^2 \to \rr^2$  given by $g_i (x) = x/n + \xi_i$ for $i=0, \ldots, \nu-1$. 
In Figure~\ref{fig:counter2_alt}, those data are depicted for the parameters $n=3$, $\nu = 5$ and 
some special choice of $\xi_3,\xi_4$. 
Let $K$ be the attractor of 
the iterated function system  $(Q,\{g_1, \ldots, g_\nu \})$.
Note that this iterated function system satisfies the open set condition, with the set $V=(0,1)\times (0,1)$, 
i.e. we have  $g_i(V) \subset V$ for every $i = 1,\ldots,\nu$ and $g_i(V)\cap g_j(V)=\emptyset$ for 
every $i\neq j$.
For $\vk =(k_1, \ldots, k_j) \in \Gamma_j$ denote $g_{\vk} = g_{k_1} \circ \cdots \circ g_{k_j}$.
Moreover, let $K_{\vk} = g_{\vk} (K)$, $Q_{\vk} = g_{\vk} (Q)$.
Recall the following facts about the attractor $K$.
\begin{enumerate}
\item  $K = \bigcup_{\ell=1}^\nu K_\ell = \bigcup_{\vk \in \Gamma_j} K_{\vk}$ for each $j\geq 1$.
\item   $K_\vk = \bigcup_{\ell=1}^\nu K_{( \vk,\ell)}$ for $\vk \in \Gamma_j$ and $j\geq 1$.
\item $K = \bigcap_{j=1}^\infty \big( \bigcup_{\vk \in \Gamma_j} Q_{\vk} \big)$.
\item The Hausdorff dimension of $K$ equals 
$d = \log \nu / \log n $, and $0< \mu_d(K) < \infty$, where $\mu_d$ is $d$-dimensional Hausdorff measure.
In addition, 
$\mu_d (K_\vk \cap K_{\vk'} ) = 0$ for $\vk, \vk' \in \Gamma_j $, $\vk \neq \vk'$.
\item Because of the choice of $\xi_1, \ldots, \xi_n$, the diagonal $\Delta$ 
is contained in $K$ and $\Delta = \sum_{\vk \in \Gamma_j^*} g_\vk(\Delta)$; it follows that $\Delta \subset \bigcup_{\vk \in \Gamma_j^*} K_\vk$.
In particular, if ${\vk \in \Gamma_j^*}$ we have  $g_\vk(\Delta) = \Delta \cap Q_\vk = \Delta \cap K_\vk$.
\item \label{it:hd_ifs} For $\vk \in \Gamma_j$ we have ${\rm diam} K_\vk =  1/n^j \cdot {\rm diam} K $ and $\mu_d(K_\vk) =  1/\nu^j \cdot \mu_d(K)$.
\end{enumerate}

%

\begin{figure}
    \includegraphics[width=\textwidth]{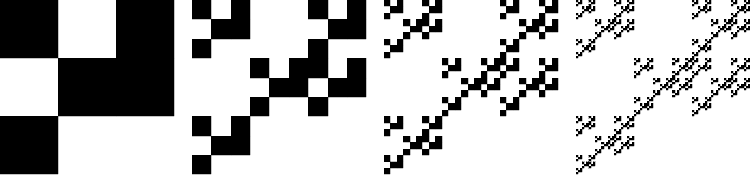}
    \caption{The first four iterations of the fractal $K$, with the same choice 
    of $\nu$ and $(\xi_i)$ as in Figure~\ref{fig:counter2_alt}.}
    \label{fig:ifs2}
\end{figure}

Now, we take $\Omega = K$, $\mathbb P (A)= \mu_d (A\cap K)/ \mu_d(K)$ and 
$\bigcup_{j=1}^\infty \{K_\vk, \vk \in \Gamma_j\}$ as the collection of atoms. 
Each atom $K_\vk$ is split into $\nu$ atoms of the next generation, each having equal measure. 
For some parameters $0<\sigma<p<\infty$,
denote by $V_{p,\sigma}$ the ring variation space corresponding to this splitting.
Moreover, consider a binary refinement of those atoms
 such that at some stage each $K_{(\vk,n+1)} \cup K_{(\vk,n+2)}$ is an atom. Let $\tilde{V}_{p,\sigma}$ 
 be the ring variation space corresponding to this binary filtration. 
 Note that since every atom/ring in the $\nu$-ary filtration is an atom/ring in the 
 corresponding binary filtration, we have the inequality
 \[
    |f|_{V_{\sigma,p}} \leq |f|_{\tilde{V}_{\sigma,p}},\qquad f\in \tilde{V}_{\sigma,p}.
 \]
 We now show that the reverse inequality is in general not true.

For $\vk \in \Gamma_j^*$, put $a_\vk = \charfun_{K_{(\vk, n+1)}} - \charfun_{K_{(\vk, n+2)}}$. 
Moreover, let $\alpha_j = \nu^{j/p} n^{-j/\sigma}$, and 
$$
f_j = \alpha_j \sum_{\vk \in \Gamma_j^*} a_\vk, \qquad \phi = \sum_{j=1}^\infty f_j.
$$
Observe that by item \eqref{it:hd_ifs} of the above list we have $\| a_{\vk} \|_p^p =2 \nu^{-j-1}$ for $\vk \in \Gamma_j^*$.
Let us first check that the function $\phi$ is contained in $L^p$.
Note that the supports of the functions $a_{\vk}$ are disjoint, and therefore 
also the supports of the functions $f_j$.
Therefore, since $\card \Gamma_j^* = n^j$, 
%
\[
\| f_j \|_p^p = 2 \alpha_j ^p n^{j}/\nu^{j+1} = 2 n^{j(1 - p/\sigma)}/\nu.
\]
Consequently,
\[
 \| \phi \|_p^p = \sum_{j=1}^\infty \| f _j \|_p^p \leq 2/\nu \cdot \sum_{j=1}^\infty n^{j(1 - p/\sigma)} < \infty,
\]
since $p/\sigma >1$.

Next, we prove that $| \phi |_{\tilde{V} _{p,\sigma}} = \infty$.
Indeed, by the disjointness of the supports of $a_{\vk}$ again, we can use 
those supports, which are atoms in our binary filtration by construction,
to estimate for fixed $J$,
\begin{eqnarray*}
| \phi |_{\tilde{V} _{p,\sigma}}^\sigma  
\geq \sum_{j=1}^J \sum_{\vk \in \Gamma_j^*} \alpha_j^\sigma \| a_\vk \|_p^\sigma 
= \sum_{j=1}^J n^j \alpha_j^\sigma 2^{\sigma/p} \nu^{-(j+1)\sigma/p} = (2/\nu)^{\sigma/p} \cdot J,
\end{eqnarray*}
implying $ |\phi|_{\tilde{V}_{\sigma,p}} = \infty$.

Lastly, we show that the variation norms of  the functions $f_j$ in the 
space $V_{\sigma,p}$ are uniformly bounded, i.e., $\sup_J | \phi_J |_{{V} _{p,\sigma}} < \infty$.
To this end, observe first that each $f_j$ is 
constant on $K_{\vk}$ if $\vk\in \Gamma_s \setminus \Gamma_s^*$  for some $s$.
Therefore, the same is true for the function 
$\phi_J$. Consequently, if $R \subset K_\vk$ with $\vk \in \Gamma_s \setminus \Gamma_s^*$ for some $s$, then $E_p (f,R) = 0$.

If $\vk \in \Gamma_s^*$ we get, since $p/\sigma > 1$,
\[
\| \phi \|_{L^p(K_\vk)}^p   =  \sum_{j=s}^\infty \sum_{\vz \in \Gamma_j^*: K_\vz \subset K_\vk} \alpha_j^p \| a _\vz \|_p^p
\simeq \sum_{j=s} ^J n^{j-s} n^{-jp/\sigma} \nu^j \nu^{-(j+1)} \simeq n^{-sp/\sigma}.
\]
Consequently, for a ring $R = K_\vk \setminus K_{\vk'}$ with $\vk \in \Gamma_s^*$ and $\vk' \in \Gamma_t$ with $t>s$,
$$
E_p(\phi,R) \leq \| \phi \|_{L^p(K_\vk)} \simeq n^{-s/\sigma}.
$$
Next, note that 
$| \Delta \cap R | \simeq |\Delta \cap K_\vk | = n^{-s} |\Delta| = n^{-s}$, where $| \cdot |$ denotes 
the  1-dimensional Lebesgue measure on the diagonal $\{(t,t): t \in \rr \}$.
Combining this with the  above we find
$$
E_p(\phi,R)^\sigma \lesssim n^{-s} \simeq | \Delta \cap R|.
$$
Furthermore, it is clear that if two rings  $R$ and $R'$ are disjoint, so are the sets 
$\Delta\cap R$ and $\Delta \cap R'$.
Therefore, if $\tilde{\Pi}$ is a collection of pairwise disjoint atoms or rings of the form $R =  K_\vk \setminus B$ with
$\vk \in \Gamma_s^* $ and either $B = \emptyset$ or $B= K_{\vk'}$ with $\vk' \in \Gamma_t$ with $t>s$, we conclude
$$
\sum_{R \in \tilde{\Pi}} E_p(\phi,R) ^\sigma \lesssim \sum_{R \in \tilde{\Pi}} | R \cap \Delta| \leq |\Delta|=1,
$$
yielding that $|\phi|_{V_{p,\sigma}} < \infty$.
Summarizing, this means that the spaces $V_{\sigma,p}$ and $\tilde{V}_{\sigma,p}$ 
do not coincide.

\subsection{\label{ssq.2} Analysis of the variation space $V_{\sigma,p}$}
Because of Theorem \ref{theo.101}, it is of interest to understand the space $V_{\sigma,p}$ better.
For this, for each $f \in L^p(S)$, we are going to give sequences $W_\mu(f)$ such that $\| f - W_\mu f \|_p \to 0$ as $\mu \to \infty$, and simultaneously $|f |_{V_{\sigma,p}} \simeq \sup_{\mu} | W_\mu(f) |_{V_{\sigma,p}}$.

Clearly we have

\begin{prop}\label{prop.Ep.zero}For all $f\in L^p(\Omega)$ and $A\in\mathscr F$, we have
\[
E_p(f,A) = 0 \Leftrightarrow f\charfun_A \in S_A.
\]
\end{prop}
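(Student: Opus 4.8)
The plan is to identify $E_p(f,A)$ as the $L^p$-distance from $f\charfun_A$ to the subspace $S_A$, and then to exploit that $S_A$ is finite-dimensional, hence closed; a point has zero distance to a closed set if and only if it belongs to it.

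First I would unwind the definition. For $g = h\charfun_A$ with $h\in S$ we have $\|f-g\|_{L^p(A)} = \|(f-h)\charfun_A\|_p$, since $g$ agrees with $h$ on $A$. Consequently
\[
E_p(f,A) = \inf_{h\in S}\|(f-h)\charfun_A\|_p = \operatorname{dist}_{L^p}\big(f\charfun_A,\, S_A\big).
\]
The implication $f\charfun_A\in S_A \Rightarrow E_p(f,A)=0$ is then immediate: writing $f\charfun_A = h\charfun_A$ with $h\in S$ and choosing $g = h\charfun_A\in S_A$ gives $\|f-g\|_{L^p(A)}=0$.

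For the converse, suppose $E_p(f,A)=0$ and pick $h_n\in S$ so that $g_n := h_n\charfun_A$ satisfies $\|f\charfun_A - g_n\|_p \to 0$. The essential observation is that $S_A = \{h\charfun_A : h\in S\}$ is the image of the finite-dimensional space $S$ under the linear map $h\mapsto h\charfun_A$, and is therefore itself finite-dimensional (of dimension at most $\dim S$). Since every finite-dimensional subspace of a complete (quasi-)normed space is closed, $S_A$ is closed in $L^p$. Hence the $L^p$-limit $f\charfun_A$ of the sequence $(g_n)\subset S_A$ lies in $S_A$, which is exactly the claim.

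I do not expect a genuine obstacle here: the entire content of the statement is the closedness of the finite-dimensional subspace $S_A$. The only point worth a remark is the range $0<p<1$, where $L^p$ is merely a quasi-Banach space; there one works with the translation-invariant metric $d(u,v)=\int_\Omega|u-v|^p\dif\mathbb P$ in place of a norm, but $L^p$ is still complete and finite-dimensional subspaces remain closed, so the argument goes through verbatim. Note also that no use is made of the stability condition \eqref{eq:L1Linfty} or of $A$ being an atom; the result holds for arbitrary $A\in\mathscr F$.
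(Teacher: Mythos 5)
Your proof is correct; the paper offers no argument at all for this proposition (it is introduced with ``Clearly we have''), and your reduction to the closedness of the finite-dimensional subspace $S_A$ in the complete (quasi-)normed space $L^p$ is exactly the standard justification one would supply, including the remark about $0<p<1$. Nothing further is needed.
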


 Let $M \geq 1$.
For each $f \in L^p$ and $A \in \caa$, let $f_A \in S$ be such that 
\begin{equation}\label{txt.eq.3a}
\| f_A - g \|_{L^p(A)} \leq M \| f -g \|_{L^p(A)} \quad \hbox{ for all } g \in S.
\end{equation}
Clearly, $f_A$ depends also on $M$, but we do not show this in the notation.
We next state the following lemma, whose proof is straightforward. 
It provides us with the explicit relation between the constants $M$ in \eqref{txt.eq.3a} and $L$ 
in \eqref{eq:defL}.
\begin{lem}\label{txt.lem.11} Let $0< p < \infty$ and  let $\rho = \min(1,p)$.
Let $f \in L^p$ and $A \in \caa$.

Then, 
\begin{enumerate}[(i)]
\item  If $f_A \in S$ satisfies \eqref{txt.eq.3a}, then 
$$
\| f - f_A \|_{L^p(A)} \leq (M^\rho +1)^{1/\rho} E_p (f,A).
$$

\item  Fix $L \geq 1$, and let $f_A \in S$ be such that 
\begin{equation}\label{eq:defL}
\| f - f_A \|_{L^p(A)} \leq L E_p (f,A).
\end{equation}
Then $f_A$ satisfies \eqref{txt.eq.3a} with $M = (L^\rho +1)^{1/\rho}$.
Consequently, for each $M \geq 2^{1/\rho}$, $f$ and $A$, there exists $f_A \in S$ satisfying \eqref{txt.eq.3a}.
\end{enumerate}
\end{lem}

Now, fix $M \geq 1$. For $f \in L^p$ and $\mu \geq 0$, put
\begin{equation}\label{eq.Wmu.f}
W_{\mu,M} f = W_\mu(f) := \sum_{A\in\caa_\mu} f_A\charfun_A,
\end{equation}
where $f_A$ satisfies  \eqref{txt.eq.3a} with given $M$.

We note that $W_\mu(f)$ is not unique, moreover, $W_\mu$ need not be  a linear operator. And we have guaranteed the existence of  
$W_\mu f$ only for $M$ sufficiently large, cf. Lemma \ref{txt.lem.11} (ii). 

The main result of this section is the following:

\begin{theo}
\label{theo.txt.1}
Let $0 < \sigma <  p < \infty$. Fix $M$ and let $W_\mu$ be given by formula \eqref{eq.Wmu.f} 
with this fixed $M$.

  Then for each $f \in L^p(S)$ we have
$W_\mu f \to f $ in $L^p(\Omega)$.

Moreover, for   any $f\in L^p(S)$
\[
|f|_{V_{\sigma,p}} \leq \liminf_{\mu\to+\infty} |W_\mu f|_{V_{\sigma,p}} \leq \limsup_{\mu \to +\infty} |W_\mu f|_{V_{\sigma,p}} \leq
\sup_{\mu\in\nn} |W_\mu f|_{V_{\sigma,p}} \leq M |f|_{V_{\sigma,p}}.
\]
That is,  if one of the values in the above chain is $\infty$, then all of them are  $\infty$. If one of these values is finite, then all of them are finite.
\end{theo}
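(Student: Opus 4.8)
The statement has two independent parts: the $L^p$-convergence $W_\mu f \to f$, and the four-term chain of inequalities relating $|f|_{V_{\sigma,p}}$ to the liminf, limsup, and sup of $|W_\mu f|_{V_{\sigma,p}}$. The plan is to dispatch the convergence first, then prove the two nontrivial endpoints of the chain (the outer inequalities), since the middle inequalities $\liminf \le \limsup \le \sup$ are trivial from the definitions of those operations.

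For the convergence $W_\mu f \to f$ in $L^p$, first I would observe that on each atom $A \in \caa_\mu$ we have, by Lemma~\ref{txt.lem.11}(i), $\|f - f_A\|_{L^p(A)} \le (M^\rho+1)^{1/\rho} E_p(f,A)$ with $\rho = \min(1,p)$. Summing the $p$-th powers (for $p \ge 1$; a $\rho$-power version for $p<1$) over all $A \in \caa_\mu$ gives $\|f - W_\mu f\|_p^{\min(p,1)} \lesssim \sum_{A \in \caa_\mu} E_p(f,A)^{\min(p,1)}$. Since $f \in L^p(S)$ and $L^p(S) = \overline{\Span}_{L^p}\cc$, the right-hand side tends to $0$ as $\mu \to \infty$: this is the standard fact that the conditional-expectation-type projections onto $S_A$ over finer and finer partitions converge to $f$. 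I would make this precise by approximating $f$ by a finite linear combination $g$ of elements of $\cc$; for such $g$, once $\mu$ is large enough that every atom supporting a piece of $g$ is refined, $E_p(g,A)=0$ on all relevant atoms, and then a density/triangle-inequality argument transfers convergence to general $f$.

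For the upper bound $\sup_\mu |W_\mu f|_{V_{\sigma,p}} \le M|f|_{V_{\sigma,p}}$, fix $\mu$ and a finite collection $\Pi$ of disjoint atoms or rings; I must bound $\sum_{R\in\Pi} E_p(W_\mu f, R)^\sigma$ by $M^\sigma |f|_{V_{\sigma,p}}^\sigma$. The key structural observation is that $W_\mu f$ is, on each atom of $\caa_\mu$, an element of $S$; hence for any ring or atom $R$ that sits inside a single atom of $\caa_\mu$ we have $E_p(W_\mu f, R)=0$ by Proposition~\ref{prop.Ep.zero}, and for rings $R$ that are unions of whole $\caa_\mu$-atoms the defining property \eqref{txt.eq.3a} of $f_A$ converts the best $S$-approximation of $W_\mu f$ on $R$ into the best $S$-approximation of $f$ on the same pieces, gaining exactly the factor $M$. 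Concretely, I expect $E_p(W_\mu f, R) \le M\, E_p(f,R)$ (or a suitable aggregate over the sub-atoms comprising $R$), after which taking the supremum over $\Pi$ yields the claim. This is where the precise interaction between \eqref{txt.eq.3a} and the definition of $E_p$ must be handled carefully, and it is the main obstacle: one has to verify that refining $\Pi$ to align with the $\caa_\mu$ grid does not increase the $V_{\sigma,p}$-sum beyond the factor $M$.

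Finally, for the lower bound $|f|_{V_{\sigma,p}} \le \liminf_{\mu} |W_\mu f|_{V_{\sigma,p}}$, I would fix an arbitrary finite admissible collection $\Pi$ and show $\big(\sum_{R\in\Pi} E_p(f,R)^\sigma\big)^{1/\sigma} \le \liminf_\mu |W_\mu f|_{V_{\sigma,p}}$; taking the supremum over $\Pi$ then gives the result. For this, each $R \in \Pi$ is an atom or ring built from finitely many atoms, so for $\mu$ large enough every $R \in \Pi$ is a union of $\caa_\mu$-atoms, i.e. $R$ remains an admissible ring in the $\mu$-th grid. The point is then lower semicontinuity: since $W_\mu f \to f$ in $L^p$, and $E_p(\cdot, R)$ is continuous in $L^p(R)$ (being an infimum of $\|\cdot - g\|_{L^p(R)}$ over $g \in S_R$, hence $1$-Lipschitz in the $L^p(R)$-norm), we get $E_p(f,R) = \lim_\mu E_p(W_\mu f, R)$ for each fixed $R \in \Pi$. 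Summing the finitely many $\sigma$-powers and using that $\sum_{R\in\Pi} E_p(W_\mu f,R)^\sigma \le |W_\mu f|_{V_{\sigma,p}}^\sigma$ for every $\mu$, I pass to the liminf to conclude. The combination of these three pieces closes the chain.
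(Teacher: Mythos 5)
Your overall route is the same as the paper's: the convergence $W_\mu f\to f$ via Lemma~\ref{txt.lem.11}(i) and additivity of $\|\cdot\|_p^p$ over the atoms of $\caa_\mu$ (this is Proposition~\ref{prop.Wmu.f.est}); the lower bound via lower semicontinuity of $E_p(\cdot,R)$ under $L^p$-convergence applied to each fixed finite collection $\Pi$ (Proposition~\ref{fact.1}); and the upper bound via the estimate $E_p(W_\mu f,R)\leq M\,E_p(f,R)$ for rings aligned with the grid $\caa_\mu$ (Proposition~\ref{fact.2}). The first two parts of your argument are complete and correct; note only that for $0<p<1$ no separate ``$\rho$-power version'' is needed, since $\|\cdot\|_{L^p}^p$ is additive over disjoint sets for every $0<p<\infty$, and that your remark about each $R\in\Pi$ becoming a union of $\caa_\mu$-atoms for large $\mu$ is superfluous for the lower bound, because $|W_\mu f|_{V_{\sigma,p}}$ is by definition a supremum over all collections of atoms or rings of the filtration, not only those at level $\mu$.

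The one genuine gap is exactly the step you flag as ``the main obstacle'' in the upper bound and then leave unresolved. Your two cases --- $R$ contained in a single atom of $\caa_\mu$ (so $E_p(W_\mu f,R)=0$) and $R$ a union of whole $\caa_\mu$-atoms (so the factor $M$ appears) --- do not exhaust the possibilities: a ring $R=A\setminus B$ in $\Pi$ may have $A$ a superset of $\caa_\mu$-atoms while $B$ is a \emph{strict subset} of some atom of $\caa_\mu$, and then neither case applies. The paper closes this with Lemma~\ref{lem.fact.2.gen}: after discarding the members of $\Pi$ with $E_p(W_\mu f,\cdot)=0$, any such $R=A\setminus B$ can be replaced by the full atom $A$; the point is that no other surviving member of the collection can be contained in $B$ (it would have $E_p(W_\mu f,\cdot)=0$ there), so the modified family is still pairwise disjoint, and $E_p(W_\mu f,R)\leq E_p(W_\mu f,A)$ shows the sum does not decrease. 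Iterating this coarsening produces a disjoint family aligned with $\caa_\mu$, on which your factor-$M$ estimate applies. (Also, the needed operation is a coarsening, not a ``refining'' of $\Pi$ as you write.) Without this reduction the bound $\sup_\mu|W_\mu f|_{V_{\sigma,p}}\leq M|f|_{V_{\sigma,p}}$ is only established for collections already aligned with the grid.
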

We remark that in particular  for  $M=1$, we have
$|f|_{V_{\sigma,p}} =  \lim_{\mu\to+\infty} |W_\mu f|_{V_{\sigma,p}}
= \sup_{\mu\in\nn} |W_\mu f|_{V_{\sigma,p}}$.

We split the proof of Theorem \ref{theo.txt.1} into a series of auxiliary results.

\begin{prop}\label{prop.Wmu.f.est}
 Let $W_\mu$ be given by formula \eqref{eq.Wmu.f} with  fixed $M$. Let $\rho = \min(1,p)$ and $L = (M^\rho+1)^{1/\rho}$.

Then for each $f\in L^p(S)$ and $\mu \to \infty$
\[
\Vert f - W_\mu(f)\Vert_{L^p(\Omega)} \leq L \Big( \sum_{A\in \caa_\mu} E_p(f, A)^p\Big)^{1/p} \to 0.
\]
\end{prop}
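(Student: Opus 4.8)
The plan is to prove the two assertions separately: first the pointwise bound, then the convergence of its right-hand side. For the inequality the decisive structural fact is that the atoms of $\caa_\mu$ partition $\Omega$, so by \eqref{eq.Wmu.f} I can write $f - W_\mu f = \sum_{A\in\caa_\mu}(f-f_A)\charfun_A$ as a sum of functions with pairwise disjoint supports. Hence $\Vert f - W_\mu f\Vert_{L^p(\Omega)}^p = \sum_{A\in\caa_\mu}\Vert f - f_A\Vert_{L^p(A)}^p$ for every $0<p<\infty$. I would then invoke Lemma~\ref{txt.lem.11}(i), which bounds each local term by $\Vert f - f_A\Vert_{L^p(A)} \le L\, E_p(f,A)$ with $L=(M^\rho+1)^{1/\rho}$; summing and taking $p$-th roots gives the inequality. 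This half is routine.

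The substance lies in showing that the right-hand side tends to zero, i.e. $\Sigma_\mu(f):=\sum_{A\in\caa_\mu}E_p(f,A)^p\to 0$. The first step is to verify this on the dense class $\Span\cc$. For $g=\sum_{i=1}^k c_i h_i\charfun_{B_i}$ with $h_i\in S$ and $B_i\in\caa$, each $B_i$ is an atom of some $\mathscr F_{n_i}$; since the filtration is increasing, for $\mu\ge n_0:=\max_i n_i$ every atom $A\in\caa_\mu$ is contained in a unique atom of $\mathscr F_{n_i}$, hence is either contained in or disjoint from each $B_i$. On such an $A$ the function $g$ then agrees with the single element $\sum_{i:\,A\subset B_i}c_i h_i\in S$, so $g\charfun_A\in S_A$ and, by Proposition~\ref{prop.Ep.zero}, $E_p(g,A)=0$. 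Therefore $\Sigma_\mu(g)=0$ for all $\mu\ge n_0$.

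Finally I would pass from $\Span\cc$ to a general $f\in L^p(S)=\overline{\Span}_{L^p}\cc$ by density combined with a (quasi-)triangle inequality for $E_p$. Given $\varepsilon>0$, choose $g\in\Span\cc$ with $\Vert f-g\Vert_p<\varepsilon$. Applying the $L^p$-(quasi-)triangle inequality inside the infimum defining $E_p$ yields, with $\rho=\min(1,p)$,
\[
E_p(f,A)^\rho \le E_p(g,A)^\rho + \Vert f-g\Vert_{L^p(A)}^\rho,\qquad A\in\caa_\mu .
\]
Summing over $A\in\caa_\mu$, using $\Sigma_\mu(g)=0$ for $\mu\ge n_0$ and the partition identity $\sum_{A\in\caa_\mu}\Vert f-g\Vert_{L^p(A)}^p=\Vert f-g\Vert_p^p$, I obtain $\Sigma_\mu(f)\lesssim \Vert f-g\Vert_p^p<\varepsilon^p$, with implied constant $1$ when $p\le 1$ and $2^{p-1}$ when $p>1$; for $p\ge 1$ one may instead apply Minkowski's inequality in $\ell^p$ directly to the sequences $(E_p(g,A))_A$ and $(\Vert f-g\Vert_{L^p(A)})_A$. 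As $\varepsilon$ is arbitrary, $\Sigma_\mu(f)\to 0$. I expect the main obstacle to be precisely this convergence half: isolating the fact that finite dictionary combinations have eventually vanishing local errors and then transferring this to arbitrary $f$ with the correct power of the $L^p$-(quasi-)norm, uniformly across the whole range $0<p<\infty$.
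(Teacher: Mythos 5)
Your proposal is correct and follows essentially the same route as the paper: the inequality comes from the disjointness of the atoms in $\caa_\mu$ together with Lemma~\ref{txt.lem.11}(i), and the convergence is exactly the (needed direction of the) paper's assertion that $f\in L^p(S)$ iff $\sum_{A\in\caa_\mu}E_p(f,A)^p\to 0$, which the paper leaves as "clear" and which you prove in the standard way via vanishing of the local errors on $\Span\cc$ for large $\mu$ plus a density/quasi-triangle argument. Your write-up simply supplies the details the paper omits.
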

\begin{proof} The first inequality is a consequence of Lemma \ref{txt.lem.11} 
  (i) and the definition of $W_\mu f$, cf. \eqref{eq.Wmu.f}.

It is clear that $f \in L^p(S)$ if and only if 
$  \sum_{A\in \caa_\mu} E_p(f,A)^p \to 0$ as $\mu \to \infty$.
\end{proof}

\begin{prop}\label{fact.1}
  Let $0<\sigma<p<\infty$ and $f\in L^p(S)$ with $f_n \to f$ in $L^p(S)$.

Then we have
\begin{equation}
|f|_{V_{\sigma,p}(S)} \leq \liminf_{n\to\infty} |f_n|_{V_{\sigma,p}(S)}. 
\end{equation}
In particular, choosing $f_n = W_n f$,
\[
 |f|_{V_{\sigma,p}(S)} \leq \liminf_{n\to\infty} |W_n f|_{V_{\sigma,p}(S)}. 
\]
\end{prop}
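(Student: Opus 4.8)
The plan is to exploit that $|\cdot|_{V_{\sigma,p}(S)}$ is a supremum over a family of functionals, each of which is continuous with respect to convergence in $L^p(\Omega)$; a supremum of continuous functionals is automatically lower semicontinuous, which is precisely the asserted inequality. Concretely, I would fix an arbitrary \emph{finite} collection $\Pi$ of pairwise disjoint atoms or rings admissible in Definition~\ref{def.var}, and reduce the statement to showing
\[
\Big(\sum_{R\in\Pi} E_p(f,R)^\sigma\Big)^{1/\sigma} \leq \liminf_{n\to\infty} |f_n|_{V_{\sigma,p}(S)},
\]
after which taking the supremum over all such $\Pi$ on the left-hand side yields the claim.

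The key step is the continuity of the local approximation errors $E_p(\cdot,R)$ under $L^p$-convergence. I would set $\rho = \min(1,p)$, so that the $L^p$-quasinorm obeys $\|u+v\|_{L^p(R)}^\rho \leq \|u\|_{L^p(R)}^\rho + \|v\|_{L^p(R)}^\rho$ (the ordinary triangle inequality when $p\geq 1$, and $p$-subadditivity when $0<p<1$). Fixing a ring or atom $R$ and any $g\in S_R$, this gives
\[
E_p(f,R)^\rho \leq \|f-g\|_{L^p(R)}^\rho \leq \|f-f_n\|_{L^p(R)}^\rho + \|f_n - g\|_{L^p(R)}^\rho,
\]
and taking the infimum over $g\in S_R$ produces $E_p(f,R)^\rho \leq \|f-f_n\|_{L^p(R)}^\rho + E_p(f_n,R)^\rho$. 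Exchanging the roles of $f$ and $f_n$ yields the reverse bound, so that $|E_p(f,R)^\rho - E_p(f_n,R)^\rho| \leq \|f-f_n\|_{L^p(R)}^\rho \leq \|f-f_n\|_{L^p(\Omega)}^\rho$. Since $f_n\to f$ in $L^p(\Omega)$ and $t\mapsto t^{1/\rho}$ is continuous, this forces $E_p(f_n,R)\to E_p(f,R)$ for every fixed $R$.

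It remains to combine the finitely many convergences. Because $\Pi$ is finite, the map $h\mapsto \sum_{R\in\Pi} E_p(h,R)^\sigma$ is a finite sum of continuous functionals, so
\[
\sum_{R\in\Pi} E_p(f,R)^\sigma = \lim_{n\to\infty}\sum_{R\in\Pi} E_p(f_n,R)^\sigma \leq \liminf_{n\to\infty} |f_n|_{V_{\sigma,p}(S)}^\sigma,
\]
where the last inequality uses that for each $n$ the collection $\Pi$ is admissible, hence $\sum_{R\in\Pi} E_p(f_n,R)^\sigma \leq |f_n|_{V_{\sigma,p}(S)}^\sigma$. Taking the supremum over $\Pi$ and then $\sigma$-th roots yields the first inequality, and the ``in particular'' assertion follows at once by choosing $f_n = W_n f$, which converges to $f$ in $L^p$ by Proposition~\ref{prop.Wmu.f.est}. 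The only genuine subtlety is the quasinorm bookkeeping for $0<p<1$, cleanly absorbed by the exponent $\rho$; the interchange of limit and sum is harmless precisely because each admissible $\Pi$ is finite, so no difficulty arises from the outer supremum ranging over an infinite family.
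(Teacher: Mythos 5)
Your proof is correct and follows essentially the same route as the paper: both arguments rest on the pointwise convergence $E_p(f_n,R)\to E_p(f,R)$ for each fixed atom or ring $R$ (which you justify in detail via the quasinorm inequality with $\rho=\min(1,p)$, while the paper merely asserts it) together with the finiteness of every admissible collection $\Pi$. Your organization --- fix $\Pi$, pass to the limit, then take the supremum --- is a slightly cleaner packaging of the same lower-semicontinuity argument and neatly avoids the paper's separate treatment of the cases $|f|_{V_{\sigma,p}}<\infty$ and $|f|_{V_{\sigma,p}}=\infty$.
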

\begin{proof}

We begin with the observation that for each atom or ring $R$ we have 
 $E_p (f_\mu , R) \to E_p (f,R)$ as $\mu \to \infty$,
 which is a consequence of the convergence $f_\mu \to f$ in $L^p$.

Consider first the case  $|f|_{V_{\sigma,p}(S)}<\infty$. Fix  $\varepsilon>0$ and a 
finite family $\crr$ of disjoint atoms or rings such that 
\[
\sum_{R\in \crr} E_p(f,R)^\sigma \geq  |f|_{V_{\sigma,p}(S)}^\sigma - \varepsilon.
\]
Let $r$ be the number of sets in the collection $\crr$ and
take $\mu_0 = \mu_0(r, \varepsilon)$  such that
for all $\mu \geq \mu_0$ and $R\in \crr$,
\[
 E_p(f_\mu,R)^\sigma \geq  E_p( f,R)^\sigma - \varepsilon/r.
\]
This is possible by $E_p(f_\mu,R)\to E_p(f,R)$ as $\mu\to\infty$.
Then for all $\mu\geq \mu_0$ we have
\[
 \sum_{R\in\crr} E_p(f_\mu,R)^\sigma \geq \sum_{R\in\crr} E_p( f,R)^\sigma - \varepsilon \geq |f|_{V_{\sigma,p}(S)}^\sigma - 2\varepsilon
\]
and consequently for all $\mu\geq \mu_0$
\[
 |f_\mu|_{V_{\sigma,p}(S)}^\sigma \geq |f|_{V_{\sigma,p}(S)}^\sigma - 2\varepsilon.
\]
It follows that $\liminf_{\mu \to \infty} |f_\mu|_{V_{\sigma,p}(S)} \geq 
|f|_{V_{\sigma,p}(S)}$.

Next consider the case that  $f\in L^p(S)$ and  $|f|_{V_{\sigma,p}(S)}=\infty$. Fix $m\in\nn$ and select a finite family $\crr$ of pairwise disjoint atoms or rings such that
\[
\sum_{R\in\crr}  E_p( f,R)^\sigma \geq m+1.
\]
Similarly as before, with $r$ being the number 
of sets in the collection $\crr$, we take $\mu_0=\mu_0(r)$ so that 
for all $\mu\geq \mu_0$ and $R\in\crr$,
\[
 E_p(f_\mu,R)^\sigma \geq  E_p( f,R)^\sigma - 1/r.
\]
Then 
\[
 |f_\mu|_{V_{\sigma,p}(S)} ^\sigma\geq  \sum_{R\in\crr}  E_p( f,R)^\sigma - 1 \geq m,
\]
which implies in this case $\liminf_{\mu \to \infty} |f_\mu|_{V_{\sigma,p}(S)} = \infty$.
\end{proof}
\label{eq:Pimu}

In the sequel we will need a family of all atoms and rings generated by 
some finite partition $\mathscr U$ of $\Omega$ into disjoint rings or 
atoms corresponding to the filtration $(\mathscr F_n)$. Given such $\mathscr U$,
we first define an associated $\sigma$-algebra $\mathscr F_{\mathscr U}$ as follows.
We write each $U\in\mathscr U$ as $U = A_U \setminus B_U$ with the possibility 
of $B_U = \emptyset$ when $U$ is an atom. If $B_U = \emptyset$, let 
$\mu = \mu(U)$ be minimal such that $A_U\in\mathscr A_\mu$.
If $B_U \neq \emptyset$, let $\mu=\mu(U)$ be minimal such that $B_U\in\mathscr A_\mu$.
Then, define the $\sigma$-algebra $\mathscr F_{\mathscr U}$ to be such that 
\[
  \mathscr F_{\mathscr U} \cap U = \mathscr F_{\mu(U)} \cap U\qquad \text{for each $U\in\mathscr U$.}  
\]

Next, we define the collection $\Pi_{\mathscr U}$ of all rings or atoms coarser than $\mathscr U$
 by saying that the set $X\subset \Omega$
is contained in $\Pi_{\mathscr U}$ if and only if 
\begin{enumerate}
  \item either $X\in\mathscr A$ and $X$ is a strict superset of an atom of $\mathscr F_{\mathscr U}$
  \item or $X$ is a ring $X=A\setminus B$ with $A$ being a strict superset of an atom 
      of $\mathscr F_{\mathscr U}$ and $B$ being a superset of an atom of $\mathscr F_{\mathscr U}$.
\end{enumerate}

\begin{lem}\label{lem.fact.2.gen}
  Let $\mathscr U$ be a finite partition of $\Omega$ into 
  disjoint rings or atoms.
  Define 
\[
f = \sum_{U\in\mathscr U} f_U, \hskip1cm \text{for some functions }f_U\in S_{U}.
\]
Then
\[
|f|_{V_{\sigma,p}(S)} = \max_{\crr\in \crr(\mathscr U)} \left( \sum_{R\in\crr} E_p(f,R)^\sigma \right)^{1/\sigma},
\]
where $\crr({\mathscr U})$ is the set of families of disjoint elements of $\Pi_{\mathscr U}$.
\end{lem}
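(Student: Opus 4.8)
The plan is to reduce the supremum over \emph{all} finite families of disjoint atoms or rings (the definition of $|f|_{V_{\sigma,p}}$) to the supremum over the restricted, combinatorial family $\crr(\mathscr U)$, and then to note that the latter supremum is attained, hence a maximum. The function $f$ is constant-modulo-$S$ on each piece of $\mathscr U$: more precisely, $f\charfun_U = f_U\charfun_U\in S_U$ for each $U\in\mathscr U$, and hence $f\charfun_A\in S_A$ for any $A$ which is an atom of $\mathscr F_{\mathscr U}$. By Proposition \ref{prop.Ep.zero}, this means $E_p(f,A)=0$ for every atom $A$ of $\mathscr F_{\mathscr U}$, and more generally $E_p(f,R)=0$ whenever $R$ is a ring or atom which is a \emph{subset} of a single atom of $\mathscr F_{\mathscr U}$. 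This is the geometric heart of the reduction: only atoms or rings that are \emph{coarse relative to $\mathscr F_{\mathscr U}$} — i.e.\ that strictly contain an atom of $\mathscr F_{\mathscr U}$, as in the definition of $\Pi_{\mathscr U}$ — can contribute a nonzero term $E_p(f,R)^\sigma$.

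The first step is to show $|f|_{V_{\sigma,p}}\geq \max_{\crr\in\crr(\mathscr U)}(\sum_{R\in\crr}E_p(f,R)^\sigma)^{1/\sigma}$, which is immediate since every $\crr\in\crr(\mathscr U)$ is in particular a finite family of disjoint atoms or rings, so is an admissible competitor in Definition \ref{def.var}; finiteness of $\Pi_{\mathscr U}$ (it is built from the finitely many atoms of the finite $\sigma$-algebra $\mathscr F_{\mathscr U}$) guarantees the max is attained. The second, substantial step is the reverse inequality: given an \emph{arbitrary} finite family $\Pi$ of disjoint atoms or rings, I would produce a family $\crr\in\crr(\mathscr U)$ with $\sum_{R\in\Pi}E_p(f,R)^\sigma\leq\sum_{R'\in\crr}E_p(f,R')^\sigma$. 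First discard from $\Pi$ every member $R$ with $E_p(f,R)=0$; by the observation above, the surviving members are exactly those $R=A\setminus B$ whose outer atom $A$ strictly contains an atom of $\mathscr F_{\mathscr U}$. For such $R$, one replaces $B$ by the largest superset $\widehat B\supset B$ that is still a union of atoms of $\mathscr F_{\mathscr U}$ and satisfies $\widehat B\subsetneq A$ appropriately, thereby enlarging the ring to $\widehat R = A\setminus\widehat B\in\Pi_{\mathscr U}$; since on $A\setminus\widehat B$ the function $f$ agrees with the values on $R$ away from a set where $f\charfun{}\in S$, one checks $E_p(f,\widehat R)\geq E_p(f,R)$ (enlarging the ring only within the $S$-trivial cells cannot decrease the local approximation error). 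Disjointness of the $\Pi$-members must be preserved under this snapping-to-$\mathscr F_{\mathscr U}$ operation.

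The main obstacle I anticipate is precisely this \emph{snapping} construction: showing that an arbitrary disjoint family $\Pi$ can be replaced, member by member, by a disjoint family drawn from $\Pi_{\mathscr U}$ \emph{without decreasing} the $\sigma$-sum and \emph{without destroying disjointness}. The delicate point is that when $R=A\setminus B$ has $A$ straddling the boundaries of $\mathscr F_{\mathscr U}$-atoms, one must adjust both $A$ (shrinking it to an element of $\mathscr A$ that is coarse w.r.t.\ $\mathscr F_{\mathscr U}$) and $B$ (enlarging it to align with $\mathscr F_{\mathscr U}$), and one has to verify that the pieces removed or added carry no $E_p$-contribution — this is where Lemma \ref{lem.bernst} and Proposition \ref{prop.Ep.zero} do the work, since $f$ restricted to any $\mathscr F_{\mathscr U}$-atom lies in $S$. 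Once each $R\in\Pi$ has been replaced by a member of $\Pi_{\mathscr U}$ with at least as large an $E_p$-value and mutual disjointness is confirmed, the resulting family lies in $\crr(\mathscr U)$ and yields the reverse inequality, completing the identification of $|f|_{V_{\sigma,p}}$ with the maximum.
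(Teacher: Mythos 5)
Your overall skeleton matches the paper's: discard the members $R$ of an arbitrary disjoint family with $E_p(f,R)=0$, observe that every survivor must have its (outer) atom strictly containing an atom of $\mathscr F_{\mathscr U}$ because $f$ restricted to any atom of $\mathscr F_{\mathscr U}$ lies in $S$, and then modify the remaining rings so that the family lands in $\Pi_{\mathscr U}$ without decreasing the $\sigma$-sum; the lower bound and the attainment of the maximum are handled as in the paper. However, the one step you yourself flag as the main obstacle --- the ``snapping'' of a ring $R=A\setminus B$ whose inner atom $B$ is strictly contained in an atom $C$ of $\mathscr F_{\mathscr U}$ --- is carried out in the wrong direction, and as written it fails. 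You propose to replace $B$ by a larger set $\widehat B\supset B$ aligned with $\mathscr F_{\mathscr U}$, which produces $\widehat R=A\setminus\widehat B\subseteq R$, i.e.\ it \emph{shrinks} the ring (despite the text calling this ``enlarging''), and $E_p$ is then monotone the wrong way: since
\[
\|f-g\|_{L^p(A\setminus B)}^p=\|f-g\|_{L^p(A\setminus \widehat B)}^p+\|f-g\|_{L^p(\widehat B\setminus B)}^p\geq \|f-g\|_{L^p(A\setminus\widehat B)}^p,
\]
one only gets $E_p(f,\widehat R)\leq E_p(f,R)$, and the inequality can be strict: take $S$ the constants, $\mathscr U=\{A\setminus C,\,C\}$, $f=0$ on $A\setminus C$ and $f=1$ on $C$; then $E_p(f,A\setminus C)=0$ while $E_p(f,A\setminus B)>0$ for $B\subsetneq C$. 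The fact that $f\charfun_{C\setminus B}$ lies in $S_{C\setminus B}$ does not help, because on that piece $f$ may be a \emph{different} element of $S$ than the near-best approximant on $\widehat R$. (A minor further issue: your $\widehat B$, a ``union of atoms of $\mathscr F_{\mathscr U}$'', need not be an atom of $\mathscr A$, so $A\setminus\widehat B$ need not be a ring in the sense of the paper.)

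The correct move, which is what the paper does, is the opposite one: when $B$ is a strict subset of an atom of $\mathscr F_{\mathscr U}$, replace the ring $R=A\setminus B$ by the whole atom $A$ (i.e.\ take $\widehat B=\emptyset$). Then $\widehat R=A\supseteq R$, so $E_p(f,A)\geq E_p(f,R)$ trivially by monotonicity, and $A\in\Pi_{\mathscr U}$. Disjointness is preserved because no member of the reduced family can be contained in $B$ (it would sit inside an atom of $\mathscr F_{\mathscr U}$ and hence have vanishing $E_p$), so $A$ meets no other member of the family, and the replacement can be iterated over all offending rings. No adjustment of the outer atom $A$ is ever needed (shrinking $A$, as you suggest in your ``obstacle'' paragraph, would again decrease $E_p$): nonvanishing of $E_p(f,R)$ already forces $A$ to strictly contain an atom of $\mathscr F_{\mathscr U}$, as you correctly note earlier. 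With this reversal the argument closes exactly as you intend.
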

\begin{proof}

  Let $\cpp$ be any finite family of atoms or rings that are pairwise disjoint. Instead of $\cpp$, it is enough to consider
\[
\cpp' = \{ R\in \cpp : E_p(f,R)\neq 0\}.
\]
We distinguish the cases of atoms and rings contained in $\mathcal P'$.
\begin{enumerate}
\item Assume that  $Q\in \mathcal P'$ is an atom such that 
$Q\notin \Pi_{\mathscr U}$, which means that $Q$ is a 
 subset of some atom of $\mathscr F_{\mathscr U}$.
Then we have by definition 
of $\mathscr F_{\mathscr U}$ that $Q\subseteq U$ for some $U\in\mathscr U$.
This implies that $E_p(f,Q)=0$, but this is not possible by definition of $\mathcal P'$.
Therefore, we have that $Q\in \Pi_{\mathscr U}$.

\item Let  $R$ be a ring such that $R\in \cpp'$ with   $R=A_R\setminus B_R$.  Then  $A_R\in\Pi_{\mathscr U}$
by the same reason as in (1). 
Next we distinguish the possibilities for $B_R$.
\begin{enumerate}
\item If $B_R$ is a superset of some atom of $\mathscr F_{\mathscr U}$ as well, 
then the ring $R\in\Pi_{\mathscr U}$. 
\item If $B_R$ is a strict subset of some atom of $\mathscr F_{\mathscr U}$, 
then there is no atom or ring $Q\in \cpp'$ such that $Q\subseteq B_R$. It follows that
$A_R$ is disjoint from all elements of $\cpp'$ other than $R$.
Therefore, the modified family $\cpp'_R = (\cpp' \setminus \{R\}) \cup \{A_R\}$
is again a family of pairwise disjoint rings or atoms.
As $ E_p(f,R)^\sigma \leq E_p(f,A_R)^\sigma$, we have 
$$
\sum_{U \in \cpp'} E_p(f,U)^\sigma \leq  \sum_{U \in \cpp'_R} E_p(f,U)^\sigma .
$$
Applying the above procedure to each $R \in \cpp'$ with $R \not
\in \Pi_{\mathscr U}$, we arrive at a family $\tilde \cpp'\subset\Pi_{\mathscr U}$ -- consisting of pairwise disjoint atoms and rings -- and such that
\[
\sum_{R\in \cpp} E_p(f,R)^\sigma = \sum_{R\in \cpp'} E_p(f,R)^\sigma \leq \sum_{R\in \tilde \cpp'} E_p(f,R)^\sigma. \qedhere
\]
\end{enumerate}
\end{enumerate}
\end{proof}

\begin{prop}\label{fact.2}
  Let $0<\sigma < p<\infty$.
For all $f\in V_{\sigma,p}(S)$ and $\mu \geq 0$, we have 
\begin{equation}
 |W_\mu f|_{V_{\sigma,p}(S)} \leq  M \cdot |f|_{V_{\sigma,p}(S)} .
\end{equation}
Moreover,  if $\sup_{\mu \geq 0} |W_\mu f|_{V_{\sigma,p}(S)} = \infty$, then $ |f|_{V_{\sigma,p}(S)} = \infty$.
\end{prop}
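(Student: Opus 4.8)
The plan is to compute $|W_\mu f|_{V_{\sigma,p}}$ through the formula furnished by Lemma~\ref{lem.fact.2.gen} and then to dominate each local error $E_p(W_\mu f, R)$ by $E_p(f,R)$. First I would observe that $W_\mu f = \sum_{A\in\caa_\mu} f_A\charfun_A$ from \eqref{eq.Wmu.f} is exactly of the form treated in Lemma~\ref{lem.fact.2.gen}, with the partition $\mathscr U = \caa_\mu$ of $\Omega$ into atoms and with $f_A\charfun_A\in S_A$. Hence that lemma gives
\[
|W_\mu f|_{V_{\sigma,p}} = \max_{\crr\in\crr(\caa_\mu)} \Big( \sum_{R\in\crr} E_p(W_\mu f,R)^\sigma \Big)^{1/\sigma},
\]
so it suffices to bound $\sum_{R\in\crr} E_p(W_\mu f,R)^\sigma$ for each admissible family $\crr$ of pairwise disjoint elements of $\Pi_{\caa_\mu}$.

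The key step is the pointwise estimate $E_p(W_\mu f,R)\leq M\,E_p(f,R)$, valid whenever $R$ is a union of atoms of $\caa_\mu$. To prove it, given $\varepsilon>0$ I would choose $g_0\in S$ with $\|f-g_0\|_{L^p(R)}\leq E_p(f,R)+\varepsilon$. Since $W_\mu f=f_A$ on each atom $A\in\caa_\mu$ with $A\subseteq R$, the additivity of $\int_R|\cdot|^p$ over these atoms together with the near-best property \eqref{txt.eq.3a}, applied on each $A$ with the single fixed $g_0\in S$, yields
\[
\|W_\mu f-g_0\|_{L^p(R)}^p = \sum_{A\in\caa_\mu,\,A\subseteq R} \|f_A-g_0\|_{L^p(A)}^p \leq M^p\sum_{A\in\caa_\mu,\,A\subseteq R} \|f-g_0\|_{L^p(A)}^p = M^p\|f-g_0\|_{L^p(R)}^p.
\]
As $g_0\charfun_R\in S_R$, the left-hand side dominates $E_p(W_\mu f,R)^p$, and letting $\varepsilon\to 0$ gives the claim; note this argument is valid for all $0<p<\infty$ since the additivity and \eqref{txt.eq.3a} hold in that full range.

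Next I would verify that every $R\in\Pi_{\caa_\mu}$ is indeed a union of atoms of $\caa_\mu$. Atoms of the filtration are pairwise nested or disjoint, so an atom that strictly contains an atom of $\caa_\mu$, and likewise a set $B$ that is a superset of an atom of $\caa_\mu$, is a union of atoms of $\caa_\mu$; hence so is every ring $A\setminus B\in\Pi_{\caa_\mu}$. Therefore the key estimate applies to each $R\in\crr$, and since $\crr$ consists of pairwise disjoint atoms or rings, Definition~\ref{def.var} gives
\[
\sum_{R\in\crr} E_p(W_\mu f,R)^\sigma \leq M^\sigma \sum_{R\in\crr} E_p(f,R)^\sigma \leq M^\sigma |f|_{V_{\sigma,p}}^\sigma.
\]
Taking the maximum over $\crr$ and inserting the displayed formula from Lemma~\ref{lem.fact.2.gen} yields $|W_\mu f|_{V_{\sigma,p}}\leq M|f|_{V_{\sigma,p}}$. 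The \emph{Moreover} assertion is then the contrapositive: this bound holds for every $\mu\geq 0$ (trivially so when $|f|_{V_{\sigma,p}}=\infty$), so $|f|_{V_{\sigma,p}}<\infty$ forces $\sup_{\mu\geq 0}|W_\mu f|_{V_{\sigma,p}}\leq M|f|_{V_{\sigma,p}}<\infty$.

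I expect the only real obstacle to be the bookkeeping verification that all elements of $\Pi_{\caa_\mu}$ are unions of atoms of $\caa_\mu$, which is where the nesting structure of the filtration enters; the analytic heart — the estimate $E_p(W_\mu f,R)\leq M\,E_p(f,R)$ — is a short computation once a single near-best approximant $g_0$ is fixed simultaneously on all atoms inside $R$.
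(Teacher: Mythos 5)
Your proposal is correct and follows essentially the same route as the paper: reduce via Lemma~\ref{lem.fact.2.gen} with $\mathscr U=\mathscr A_\mu$ to families in $\Pi_{\mathscr A_\mu}$, establish $E_p(W_\mu f,R)\leq M\,E_p(f,R)$ by testing against a single competitor $g\in S$ on all atoms of $\mathscr A_\mu$ inside $R$ simultaneously, and sum. The only difference is cosmetic (a near-minimizer $g_0$ and a limit $\varepsilon\to 0$ instead of an infimum over all $g\in S$), and your explicit check that elements of $\Pi_{\mathscr A_\mu}$ are unions of atoms of $\mathscr A_\mu$ is a detail the paper states without comment.
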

\begin{proof}
Fix $\mu \geq 0$ and $R \in \Pi_{\mathscr A_\mu}$. 
Note that $R =  \bigcup_{A\in \caa_\mu, A\cap R\neq\emptyset} A$.
Applying the definition of  $W_\mu f$, we get for all $g\in S$
\begin{eqnarray*}
E_p(W_\mu f,R) & \leq & \| W_\mu f - g \|_{L^p(R)} 
= \Big( \sum_{A\in \caa_\mu, A\cap R\neq\emptyset} \| (f_A - g)\charfun_A \|_p^p \Big)^{1/p}
\\ & 
\leq & M \Big( \sum_{A\in \caa_\mu, A\cap R\neq\emptyset} \| (f - g)\charfun_A \|_p^p \Big)^{1/p} 
= M \| f -g \|_{L^p(R)} .
\end{eqnarray*}
This implies
$$
E_p(W_\mu f,R) \leq M E_p( f,R).
$$
Therefore, applying Lemma \ref{lem.fact.2.gen} in the special 
case $\mathscr U = \mathscr A_\mu$, we find
$$
|W_\mu f|_{V_{\sigma,p}(S)} \leq M \max_{\crr\in \crr(\mathscr A_\mu)} \left( \sum_{R\in\crr} E_p(f,R)^\sigma \right)^{1/\sigma}.
$$
This is enough to finish the proof.
\end{proof}

We are now ready to prove the main result of this section:
\begin{proof}[Proof of Theorem \ref{theo.txt.1}]
Theorem \ref{theo.txt.1} is a consequence of Propositions \ref{prop.Wmu.f.est}, \ref{fact.1}, and \ref{fact.2}.
\end{proof}

We conclude Section~\ref{ssq.2} by  giving some more explicit examples of $W_\mu$ 
in case of $1 \leq p < \infty$.
 In particular, it is possible to then set $W_\mu = P_\mu$,
 where $P_\mu$ denotes the linear orthoprojector onto the space $S_\mu$ of 
 functions contained piecewise in $S$ on each atom of $\mathscr F_\mu$.
 The properties of $P_\mu$ were already analyzed in \cite{part1,part2}. In particular,
 Theorem~4.4 of \cite{part1} states that
 there exists a constant $M$, depending only on the  constants $c_1,c_2$ from 
 the stability condition \eqref{eq:L1Linfty}, such that 
 \begin{equation}\label{eq:Pmu}
    \| P_\mu : L^p \to (S_\mu,\|\cdot\|_p)\| \leq M.
 \end{equation}
 This implies that $f_A := P_\mu(f)|_{A}$ satisfies  \eqref{txt.eq.3a}
 with the same constant $M$, and 
Proposition~\ref{fact.2}
implies
$$\| P_\mu : V_{\sigma,p} \to  V_{\sigma,p} \| \leq M,$$ with
$\| f \|_{ V_{\sigma,p} } = \| f \|_p + | f |_{ V_{\sigma,p} } $.
Moreover, since the orthoprojectors $P_\mu$ satisfy 
$P_\mu P_\nu  = P_\nu P_\mu = P_{\min(\mu,\nu)}$, we also obtain 
\begin{equation}\label{eq:nested}
|P_\mu f|_{V_{\sigma,p}} \leq M |P_\nu f|_{V_{\sigma,p}}, \qquad \mu\leq \nu.
\end{equation}
By specialization of Theorem~\ref{theo.txt.1} to this linear case, we 
get

\begin{co}\label{co:liminf}
Let $1\leq p<\infty$ and  $0 < \sigma <  p$. 

Then, for   any $f\in L^p(S)$
\[
|f|_{V_{\sigma,p}} \leq \liminf_{\mu\to+\infty} |P_\mu f|_{V_{\sigma,p}} \leq \limsup_{\mu \to +\infty} |P_\mu f|_{V_{\sigma,p}} \leq
\sup_{\mu\in\nn} |P_\mu f|_{V_{\sigma,p}} \leq M |f|_{V_{\sigma,p}},
\]
with the same constant $M$ as in \eqref{eq:Pmu}.
\end{co}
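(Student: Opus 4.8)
The plan is to recognize the orthoprojector $P_\mu$ as an admissible instance of the (generally nonlinear) operator $W_\mu$ from \eqref{eq.Wmu.f}, and then to invoke Theorem~\ref{theo.txt.1} verbatim. Concretely, I would first record that $S_\mu = S(\mathscr F_\mu)$ splits as an orthogonal direct sum over the atoms $A\in\caa_\mu$, since membership in $S_\mu$ is a condition imposed separately on each atom and functions supported on distinct atoms are $L^2$-orthogonal. Hence $P_\mu$ is local: on each atom $A\in\caa_\mu$ we have $(P_\mu f)|_A = Q_A(f\charfun_A)$, where $Q_A$ is the $L^2(A)$-orthoprojection onto $S_A$. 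Setting $f_A := (P_\mu f)|_A\in S$, this exhibits $P_\mu f = \sum_{A\in\caa_\mu} f_A\charfun_A$ in exactly the form \eqref{eq.Wmu.f}.

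The substantive step is to check that this particular choice of $f_A$ obeys the defining inequality \eqref{txt.eq.3a} with $M$ the constant from \eqref{eq:Pmu}. First I would promote the global bound \eqref{eq:Pmu} to a local one: applying \eqref{eq:Pmu} to a function supported on a single atom $A$ and using locality yields $\|Q_A h\|_{L^p(A)} \leq M\|h\|_{L^p(A)}$ for every $h\in L^p(A)$. Then, for any $g\in S$, since $g\charfun_A\in S_A$ we have $Q_A(g\charfun_A)=g\charfun_A$, so by linearity of $Q_A$,
\[
\|f_A - g\|_{L^p(A)} = \big\|Q_A\big((f-g)\charfun_A\big)\big\|_{L^p(A)} \leq M\,\|f-g\|_{L^p(A)},
\]
which is precisely \eqref{txt.eq.3a}. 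This is the observation already recorded in the paragraph preceding the statement, and it is the only point at which the restriction $1\leq p<\infty$ — the range in which \eqref{eq:Pmu} is available from \cite[Theorem~4.4]{part1} — enters.

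With $P_\mu$ identified as a valid $W_\mu$ for this $M$, the entire chain of inequalities is immediate from Theorem~\ref{theo.txt.1}, and the constant is the asserted one. I expect no genuine obstacle: the only step requiring care is the passage from the global operator norm \eqref{eq:Pmu} to the atomwise estimate for $Q_A$, which is handled by the locality of $P_\mu$; everything else is a direct specialization of the already-proved theorem.
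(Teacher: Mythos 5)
Your proposal is correct and follows the same route as the paper: identify $P_\mu$ as an admissible $W_\mu$ by checking that $f_A := (P_\mu f)|_A$ satisfies \eqref{txt.eq.3a} with the constant $M$ from \eqref{eq:Pmu}, then specialize Theorem~\ref{theo.txt.1}. The only difference is that you spell out the locality argument (reducing the global bound \eqref{eq:Pmu} to the atomwise bound for $Q_A$ and using $Q_A(g\charfun_A)=g\charfun_A$), which the paper merely asserts in the paragraph preceding the corollary.
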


In particular, in the case of piecewise constant functions, i.e.
with $S = \operatorname{span}\{\charfun_{\Omega}\}$,
the operator $P_\mu$ is the conditional expectation $\mathbb E_\mu$ with respect to $\mathscr F_\mu$,
satisfying \eqref{eq:Pmu} with the constant $M = 1$, giving equality of all the terms 
in Corollary~\ref{co:liminf}. Additionally, by \eqref{eq:nested}, the sequence $(|\mathbb E_\mu f|_{V_{\sigma,p}})_\mu$
is non-decreasing.

\subsection{Variation spaces need not be separable}
In this section we show that our variation spaces $V_{\sigma,p}(S)$,
similarly as the space $\operatorname{BV}(\rr^d)$,
are not separable in general. This implies in particular that we cannot expect a general convergence 
result of the form $| W_\mu f - f|_{V_{\sigma,p}} \to 0$ as $\mu\to\infty$, 
cf. Theorem~\ref{theo.txt.1}.

We start by proving the non-separability of $V_{\sigma,p}$ in the dyadic Haar case.
That means, we consider $\Omega = [0,1]$ with the Borel $\sigma$-algebra and Lebesgue measure $\lambda = |\cdot|$. Moreover,
let $\mathscr D_n$ be the dyadic $\sigma$-algebra of order $n$ and
$\mathscr A_n$ be the collection of all atoms $A$ of $\mathscr D_n$.
Denote by $\mathbb E_n$ the conditional expectation with respect to the 
$\sigma$-algebra $\mathscr D_n$.
We consider the space $S = \operatorname{span}(\charfun_{[0,1]})$.
Note that in this section, we switch to a level-wide refinement of atoms 
in the passage from $\mathscr A_{n-1}$  to $\mathscr A_n$, instead of 
our usual refinement of only one atom.

For atoms $A\in\mathscr A_n$, we have $|A| = 2^{-n}$.
Let $(r_n)_{n=1}^\infty$ be the collection of Rademacher functions so
that $r_n$ is $\mathscr D_n$-measurable, but not $\mathscr D_{n-1}$-measurable.
Let $u:[0,1]\to \mathbb R$ be a measurable function. Then, by a change of
variables, we have for each atom $A\in \mathscr A_n$
\begin{equation}
    \int_A u\Big( \sum_{j=n+1}^\infty a_{j} r_j \Big) d\lambda = 2^{-n} \int_0^1 u\Big( \sum_{j=1}^\infty a_{j+n} r_j\Big) d\lambda.
\end{equation}
In particular, if we take $u(x) = |x|^p$ for $0<p<\infty$, we have by Khintchine's
inequality
\begin{equation}\label{eq:local_khintchine}
    \Big( \int_A \big|\sum_{j=n+1}^\infty a_j r_j \big|^p d\lambda \Big)^{1/p} \simeq 2^{-n/p} \Big( \sum_{j=n+1}^\infty |a_{j}|^2\Big)^{1/2}.
\end{equation}
In the present setting, $ E_p(f,R) = \inf_{g\in S} \| f - g\|_{L^p(R)} = \inf_{c\in \mathbb R} \big(\int_R | f - c|^p\big)^{1/p}$.
Then, we note that for every $0<p<\infty$ and every $f = \sum_{j=1}^\infty a_j r_j$, 
\begin{equation}\label{eq:bestappr0}
    \|f\|_p \lesssim E_p(f,[0,1]).
\end{equation}
Indeed, since $f$ and $-f$ have the same distribution, we obtain for every $c\in\mathbb R$
\[
    \|f\|_p = \big\| \frac{f}{2} + \frac{c}{2}  +\frac{f}{2} - \frac{c}{2} \big\|_p \lesssim
    \frac{1}{2} \big( \| f+c\|_p + \|f-c\|_p \big) = \|f-c\|_p.
\]
By scaling, we get from \eqref{eq:bestappr0} for every $n\in\mathbb N$ and every atom $A\in \mathscr A_{n-1}$ that
\begin{equation}\label{eq:bestappr}
    \Big\| \sum_{j=n}^\infty a_jr_j \Big\|_{L^p(A)}\lesssim
    E_p\Big( \sum_{j=n}^\infty a_j r_j, A\Big) =
    E_p\Big( \sum_{j=1}^\infty a_j r_j, A\Big).
\end{equation}

\begin{prop}\label{prop:counter2}
    Let $0<\sigma <p<\infty$ and $\beta = 1/\sigma - 1/p > 0$. Let 
    $\Lambda \subset \mathbb N$ arbitrary.
     Define the function
    \begin{equation}\label{eq:fct_f}
        f_\Lambda = \sum_{j\in\Lambda} 2^{-j\beta} r_j.
    \end{equation}

    Then we have
    \begin{enumerate}
        \item $f_\Lambda \in V_{\sigma,p}$,
        \item for every $\Gamma\subset \mathbb N$ with 
         $\Gamma \neq \Lambda$, 
         we have $| f_\Gamma - f_\Lambda|_{V_{\sigma,p}} \gtrsim 1$,
          with a constant independent of $\Gamma$ or $\Lambda$. 
    \end{enumerate}
\end{prop}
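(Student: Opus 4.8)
The plan is to prove the two claims separately, in each case reducing everything to the local Khintchine estimate \eqref{eq:local_khintchine} together with the best-approximation bound \eqref{eq:bestappr}, and using repeatedly that $1/p + \beta = 1/\sigma$.

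For the upper bound (1) I would first record a pointwise estimate for the local errors. On an atom $A\in\mathscr A_n$ the Rademacher functions $r_j$ with $j\le n$ are constant, so subtracting that constant part and applying \eqref{eq:local_khintchine} gives
\[
E_p(f_\Lambda, A) \le \Big\| \sum_{j\in\Lambda,\, j>n} 2^{-j\beta} r_j \Big\|_{L^p(A)} \simeq 2^{-n/p}\Big(\sum_{j\in\Lambda,\, j>n} 2^{-2j\beta}\Big)^{1/2} \lesssim 2^{-n/\sigma},
\]
where the geometric sum is controlled by its first term. By monotonicity of $E_p$ in the underlying set, the same bound $E_p(f_\Lambda,R)\lesssim 2^{-n/\sigma}$ holds for any ring $R=A\setminus B$ whose outer atom $A$ lies in $\mathscr A_n$. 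The key elementary observation is that, since the removed atom $B$ is a single dyadic interval either disjoint from $A$ or strictly contained in it, one has $|R|\ge |A|/2 = 2^{-n-1}$; hence $E_p(f_\Lambda,R)^\sigma \lesssim 2^{-n}\le 2|R|$ for every atom or ring $R$ with outer atom at level $n$. Summing over a finite family $\Pi$ of pairwise disjoint atoms or rings and using disjointness, $\sum_{R\in\Pi}E_p(f_\Lambda,R)^\sigma \lesssim \sum_{R\in\Pi}|R|\le 1$, which yields $|f_\Lambda|_{V_{\sigma,p}}\lesssim 1$ and proves (1).

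For the separation estimate (2) I would write $g := f_\Gamma - f_\Lambda = \sum_{j\in\Delta}\epsilon_j 2^{-j\beta} r_j$ with $\Delta := \Gamma\triangle\Lambda\neq\emptyset$ and $\epsilon_j\in\{\pm 1\}$, and set $m := \min\Delta$. I would then test the variation seminorm against the single partition $\Pi = \mathscr A_{m-1}$ and bound each local error from below. For $A\in\mathscr A_{m-1}$ every index appearing in $g$ satisfies $j\ge m$, so \eqref{eq:bestappr} gives $\|g\|_{L^p(A)}\lesssim E_p(g,A)$, while \eqref{eq:local_khintchine} together with $m\in\Delta$ yields $\|g\|_{L^p(A)}\simeq 2^{-(m-1)/p}(\sum_{j\in\Delta}2^{-2j\beta})^{1/2}\gtrsim 2^{-m/p}2^{-m\beta}=2^{-m/\sigma}$. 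Summing over the $2^{m-1}$ atoms of $\mathscr A_{m-1}$ then gives $|g|_{V_{\sigma,p}}^\sigma \ge \sum_{A\in\mathscr A_{m-1}}E_p(g,A)^\sigma \gtrsim 2^{m-1}\cdot 2^{-m}\simeq 1$, which is independent of $m$ and hence of $\Gamma,\Lambda$; this is the desired uniform lower bound.

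The routine parts are the two applications of Khintchine and the bookkeeping identity $1/p+\beta=1/\sigma$; the only genuinely delicate point is the upper bound (1), where I expect the main obstacle to be controlling the sum over an \emph{arbitrary} disjoint family of rings, since the outer atoms of distinct rings may overlap heavily and so cannot simply be counted. The elementary measure estimate $|R|\ge |A_R|/2$ — valid precisely because a ring here removes a single dyadic subinterval — is what converts the level-wise bound $E_p^\sigma\lesssim 2^{-n}$ into the summable quantity $|R|$ and thereby circumvents any counting of the outer atoms.
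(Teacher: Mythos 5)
Your proposal is correct and follows essentially the same route as the paper's proof: for (1) the level-wise Khintchine bound $E_p(f_\Lambda,R)\lesssim 2^{-n/\sigma}=|A|^{1/\sigma}$ combined with the dyadic measure estimate $|A|\leq 2|R|$ and disjointness, and for (2) testing against the partition $\mathscr A_{m-1}$ at the first level $m$ of the symmetric difference and using \eqref{eq:bestappr} together with \eqref{eq:local_khintchine}. The only difference is cosmetic: you spell out the inequality $|R|\geq |A_R|/2$ that the paper dismisses as ``clear for dyadic partitions.''
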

\begin{proof}
    We begin with the proof of (1). Let $\Pi$ be an arbitrary partition of
    $[0,1]$ consisting of disjoint rings and atoms. For $R\in \Pi$, we let
    $A,B\in \mathscr A:=\cup_n \mathscr A_n$ such that $R = A\setminus B$ if $R$
    is a ring and $R = A\in\mathscr A$ if $R$ is an atom. For this set $R$,
    let $n=n(A)$ be the smallest index such that $A\in\mathscr A_n$.

    Then we
    estimate
    \begin{align*}
        E_p(f_\Lambda,R) & \leq \|f_\Lambda - \mathbb E_n f_\Lambda\|_{L^p(R)} \leq \|f_\Lambda-\mathbb E_nf_\Lambda\|_{L^p(A)}         \\
                 & = \Big\| \sum_{j\in\Lambda, j>n}2^{-j\beta} r_j\Big\|_{L^p(A)}  \simeq 2^{-n/p}
        \Big(\sum_{j\in\Lambda, j>n}2^{-2j\beta} \Big)^{1/2},
    \end{align*}
    by Khintchine's inequality \eqref{eq:local_khintchine}. Since the latter sum is
    a geometric series, we further obtain
    \begin{align*}
        E_p(f_\Lambda,R) \lesssim 2^{-n/p} 2^{-n\beta} =2^{-n/\sigma} = |A|^{1/\sigma}\leq (2|R|)^{1/\sigma},
    \end{align*}
    where the latter inequality is clear for dyadic partitions. Summing this inequality over $R\in\Pi$
    yields
    \[
        \sum_{R\in\Pi} E(f,R)^\sigma \lesssim \sum_{R\in\Pi} |R| =1.
    \]
    Since this is true for every partition $\Pi$ of $[0,1]$ into rings or atoms, we get
    \[
        |f|_{V_{\sigma,p}} \lesssim 1,
    \]
    concluding the proof of (1).

    Next, we prove (2) and let $\Gamma\subset \mathbb N$ with $\Gamma\neq \Lambda$ arbitrary.
    Let $n$ be the smallest integer that is present in exactly one of the sets $\Gamma$ or $\Lambda$.
    Phrased differently, $n$ is the smallest number contained in the symmetric 
    difference $\Gamma\Delta \Lambda$ of $\Gamma$ and $\Lambda$.
To estimate $|f_\Gamma - f_\Lambda|_{V_{\sigma,p}}$ from
    below, we choose the particular partition $\mathscr A_{n-1}$ of atoms in $\mathscr D_{n-1}$
    and write 
    \begin{align*}
        |f_\Gamma-f_\Lambda|_{V_{\sigma,p}}^\sigma & \geq  \sum_{A\in\mathscr A_{n-1}} E_p(f_\Gamma - f_\Lambda, A)^\sigma        
                                                    \gtrsim \sum_{A\in\mathscr A_{n-1}} \|f_\Gamma - f_\Lambda\|_{L^p(A)}^\sigma,
    \end{align*}
    where the latter inequality follows from \eqref{eq:bestappr}. We continue and
    estimate
    \begin{align*}
        |f_\Gamma - f_\Lambda|_{V_{\sigma,p}}^\sigma & \gtrsim \sum_{A\in\mathscr A_{n-1}}
        \Big\| \sum_{j\in \Gamma\Delta\Lambda} 2^{-j\beta} r_j \Big\|_{L^p(A)}^\sigma \simeq
        \sum_{A\in\mathscr A_{n-1}}  2^{-n\sigma/p} \Big(\sum_{j\in\Gamma\Delta\Lambda} 2^{-2j\beta}\Big)^{\sigma/2}
    \end{align*}
    by \eqref{eq:local_khintchine}. Since $n\in\Gamma\Delta \Lambda$, we 
    further obtain
    \begin{align*}
        |f_\Gamma-f_\Lambda|_{V_{\sigma,p}}^\sigma \gtrsim \sum_{A\in\mathscr A_{n-1}} 2^{-n\sigma/p} 2^{-n\sigma\beta}
        = 2^{n-1} 2^{-n\sigma/p} 2^{-n\sigma\beta} = 1/2,
    \end{align*}
    concluding the proof of (2).
\end{proof}

As a corollary, we get 
\begin{theo}\label{thm:sep}
  For every choice of parameters $0 < \sigma < p < \infty$,
  the space $V_{\sigma,p}(S)$ for $S = \operatorname{span}\{\charfun_{[0,1]}\}$ and 
  the dyadic filtration $(\mathscr D_n)$, is not separable.
\end{theo}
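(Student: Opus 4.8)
The plan is to deduce non-separability of $V_{\sigma,p}(S)$ directly from Proposition~\ref{prop:counter2}, which is precisely engineered for this purpose. The key observation is that Proposition~\ref{prop:counter2} produces an \emph{uncountable} family of elements of $V_{\sigma,p}$ that are uniformly separated from one another in the variation seminorm. First I would recall the standard criterion: a quasi-normed space is non-separable as soon as it contains an uncountable subset whose pairwise distances are bounded below by a fixed positive constant, since any dense subset would then need to contain at least one point in each of the disjoint balls of half that radius centered at the members of the family.

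With this criterion in hand, the proof is essentially a counting argument. I would consider the family $\{f_\Lambda : \Lambda \subseteq \mathbb{N}\}$ indexed by all subsets of $\mathbb{N}$, where $f_\Lambda = \sum_{j\in\Lambda} 2^{-j\beta} r_j$ as in \eqref{eq:fct_f}. By part~(1) of Proposition~\ref{prop:counter2}, every $f_\Lambda$ lies in $V_{\sigma,p}$, so this is genuinely a subset of our space. The power set of $\mathbb{N}$ is uncountable, so we have an uncountable indexing set; moreover distinct subsets $\Lambda \neq \Gamma$ yield distinct functions. By part~(2), for any $\Lambda \neq \Gamma$ we have $|f_\Gamma - f_\Lambda|_{V_{\sigma,p}} \gtrsim 1$ with a constant independent of the particular sets, hence $\|f_\Gamma - f_\Lambda\|_{V_{\sigma,p}} \geq |f_\Gamma - f_\Lambda|_{V_{\sigma,p}} \gtrsim 1$ as well, using $\|\cdot\|_{V_{\sigma,p}} = \|\cdot\|_p + |\cdot|_{V_{\sigma,p}}$. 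This furnishes exactly the uncountable, uniformly separated family required by the criterion.

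The remaining step is to assemble these facts into the conclusion. Let $\delta > 0$ be the uniform lower bound from part~(2). Suppose toward a contradiction that $D \subseteq V_{\sigma,p}(S)$ were a countable dense set. For each $\Lambda$ choose some $d_\Lambda \in D$ with $\|f_\Lambda - d_\Lambda\|_{V_{\sigma,p}} < \delta/2$; this is possible by density (in the quasi-normed setting one may need to invoke the quasi-triangle inequality with its constant, but choosing the radius small enough relative to that constant keeps the argument intact). The map $\Lambda \mapsto d_\Lambda$ must then be injective, since $d_\Lambda = d_\Gamma$ would force $\|f_\Lambda - f_\Gamma\|_{V_{\sigma,p}} < \delta$ by the (quasi-)triangle inequality, contradicting part~(2). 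An injection from the uncountable power set of $\mathbb{N}$ into the countable set $D$ is impossible, giving the desired contradiction.

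I do not anticipate a genuine obstacle here, since all the analytic work is already carried out in Proposition~\ref{prop:counter2}; the only point demanding a small amount of care is the quasi-normed (rather than normed) setting, where the triangle inequality carries a multiplicative constant $C \geq 1$. The clean way to handle this is to note that $V_{\sigma,p}$ is a quasi-Banach space and to replace $\delta/2$ by $\delta/(2C)$ throughout, so that two elements of $D$ within that radius of distinct $f_\Lambda, f_\Gamma$ cannot coincide. With that adjustment the counting argument goes through verbatim, and the theorem follows.
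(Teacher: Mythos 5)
Your proof is correct and follows essentially the same route as the paper: both deduce non-separability from Proposition~\ref{prop:counter2} by observing that $\{f_\Lambda : \Lambda\subset\mathbb N\}$ is an uncountable, uniformly separated subset of $V_{\sigma,p}$. The paper simply states this in one sentence, whereas you spell out the standard counting argument (including the quasi-norm adjustment), which is fine.
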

\begin{proof}
  By the above proposition, the uncountable 
  set of functions $\{ f_\Lambda : \Lambda\subset \mathbb N\}$ 
  is contained in $V_{\sigma,p}$ and two different functions 
  from this set have a uniformly positive distance to 
  each other in $V_{\sigma,p}$.
\end{proof}

We now come back to the case of general binary filtrations $(\mathscr F_n)$ 
and finite-dimensional spaces $S\subset L^\infty(\Omega)$ satisfying the 
stability condition~\eqref{eq:L1Linfty}. It is possible 
to generalize the result of Propositon~\ref{prop:counter2} and thus 
Theorem~\ref{thm:sep}, as follows:
\begin{theo}\label{thm:not_sep}
  Let $(\mathscr F_n)$ be a binary filtration with 
  $\lim_{n\to\infty}\max_{A\in\mathscr A_n} |A| = 0$
  and let $S\subset L^\infty(\Omega)$
  be finite-dimensional satisfying~\eqref{eq:L1Linfty}.
  Assume additionally that  there exists a constant $L$ such that each chain $(X_i)_{i=1}^L$
  of atoms of length $L$ satisfies $|X_L| \leq |X_1| / 2$.

  Then, for every choice of parameters $0 < \sigma < p < \infty$,
   the corresponding variation space $V_{\sigma,p}(S)$ is not separable.
\end{theo}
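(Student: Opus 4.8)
The plan is to generalize the construction underlying Proposition~\ref{prop:counter2} and Theorem~\ref{thm:sep}: I will produce an uncountable family $\{f_\Lambda\}$ of elements of $V_{\sigma,p}(S)$ that are pairwise separated by a fixed positive constant in $|\cdot|_{V_{\sigma,p}}$, which immediately precludes separability. The Rademacher functions, which relied on the exact dyadic structure, must be replaced by generalized Haar functions living on a \emph{balanced refinement tree} extracted from the filtration. The two hypotheses enter precisely here: $\lim_n\max_{A\in\caa_n}|A|=0$ guarantees that every atom is eventually subdivided (so the tree is infinite along each branch and resolves $\Omega$), while the $L$-step halving condition lets me find, inside each atom, two \emph{comparable} descendants.

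Concretely, I would build a tree $T$ of atoms as follows. The root is $\Omega$. Given a node $D$, follow the chain of successively larger children $D=Z_0\supset Z_1\supset\cdots$; by the $L$-step condition the first index $i_0$ with $|Z_{i_0}|\le|D|/2$ satisfies $i_0\le L-1$ and $|Z_{i_0}|>|D|/4$, while the siblings split off along the way have total measure $\ge|D|/2$, so by pigeonhole one of them, $W$, has $|W|\ge|D|/\bigl(2(L-1)\bigr)$. I declare $B_1(D):=Z_{i_0}$ and $B_2(D):=W$ to be the two \emph{distinguished} children; together with the remaining split-off atoms they partition $D$, every child has measure $\le|D|/2$, and both distinguished children are disjoint with measure $\ge c|D|$, $c=c(L)$. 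Recursing on every child, the nodes at tree-depth $t$, which I call $G_t$, form a partition of $\Omega$ into atoms, and along any branch the measures decay by a factor $\le1/2$ per level. For each node $D$ I fix a generalized Haar function $\psi_D$, i.e. a function in the span of $\Phi$ that is piecewise in $S$ on the children of $D$ and oscillates across $B_1(D),B_2(D)$, normalised so that $\|\psi_D\|_{L^\infty(D)}\simeq1$; these are mutually orthogonal and orthogonal to $S_D$, by \eqref{eq:L1Linfty} one has $\|\psi_D\|_{L^p(D)}\simeq|D|^{1/p}$, and by the arguments of \cite{part1} also $E_p(\psi_D,D)\simeq|D|^{1/p}$. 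Finally I choose a lacunary set of levels $t_0<t_1<\cdots$ with gaps $t_{j+1}-t_j\ge G$ (for a large constant $G$ fixed below) and, for $\Lambda\subseteq\{t_j\}_{j\ge0}$, set
\[
f_\Lambda \;=\; \sum_{t\in\Lambda}\ \sum_{D\in G_t} |D|^{\beta}\,\psi_D ,\qquad \beta=\tfrac1\sigma-\tfrac1p .
\]

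For the uniform upper bound $|f_\Lambda|_{V_{\sigma,p}}\lesssim1$ (giving $f_\Lambda\in V_{\sigma,p}$ with a constant independent of $\Lambda$), the key geometric observation is that if the jump $\psi_D$ oscillates inside a ring or atom $R=A\setminus C$, then $R$ meets both $B_1(D)$ and $B_2(D)$; since the removed atom $C$ can be contained in at most one of these disjoint atoms, the other lies fully in $R$, whence $|R|\ge c|D|$. Thus for a fixed $R$ every contributing node satisfies $|D|\lesssim|R|$, and, grouping the contributing nodes by their tree-depth below the finitely many pairwise disjoint coarsest ones, I use that nodes of a given depth are disjoint (so the $\psi_D$ have disjoint supports and their $L^p$-norms add exactly) together with the decay $|D|\le2^{-k}|R|$ to sum a convergent geometric series by the quasi-triangle inequality. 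This yields $E_p(f_\Lambda,R)^\sigma\lesssim|R|$ for every $R$, uniformly in $\Lambda$; summing over a disjoint family gives $\sum_R E_p(f_\Lambda,R)^\sigma\lesssim\sum_R|R|\le|\Omega|=1$. Since this step uses only disjointness and geometric decay, it is valid for the whole range $0<p<\infty$ and needs no Khintchine inequality.

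For the separation, given $\Lambda\neq\Gamma$ let $t_0=\min(\Lambda\Delta\Gamma)$ and test the seminorm against the \emph{covering} family $G_{t_0}$. On each $D\in G_{t_0}$ the function $f_\Lambda-f_\Gamma$ equals $\pm|D|^{\beta}\psi_D$ plus a tail coming only from levels $\ge t_0+G$; by lacunarity every tail term is carried by a node at tree-depth $\ge G$ below $D$, so the tail is bounded in $L^\infty(D)$ by $\lesssim2^{-G\beta}|D|^{\beta}$, hence in $L^p(D)$ by $\lesssim2^{-G\beta}|D|^{1/\sigma}$. Choosing $G$ large and combining the quasi-triangle inequality with $E_p(|D|^\beta\psi_D,D)\simeq|D|^{1/\sigma}$ gives $E_p(f_\Lambda-f_\Gamma,D)\gtrsim|D|^{1/\sigma}$, so that
\[
|f_\Lambda-f_\Gamma|_{V_{\sigma,p}}^{\sigma}\ \ge\ \sum_{D\in G_{t_0}}E_p(f_\Lambda-f_\Gamma,D)^{\sigma}\ \gtrsim\ \sum_{D\in G_{t_0}}|D|\ =\ 1 .
\]
As $\{f_\Lambda:\Lambda\subseteq\{t_j\}\}$ is uncountable, this proves non-separability. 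I expect the main difficulty to lie in the tree construction of the first two paragraphs: one must simultaneously keep the distinguished children \emph{comparable} to their parent (needed for the geometric estimate in the upper bound) and keep each $G_t$ a \emph{covering} partition of $\Omega$ (needed for the lower bound), and these two demands pull against each other because a general binary split may cleave off arbitrarily small pieces. Designating a comparable \emph{pair} at each node and deferring the small leftovers to deeper generations is what reconciles them; the lacunary spacing, which replaces the exact symmetry of the Rademacher system used in \eqref{eq:bestappr0}, is then what makes the lower bound survive for general $S$ and for the full range $0<p<\infty$, including $p<1$.
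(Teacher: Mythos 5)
Your overall architecture coincides with the paper's: a subsequence of the filtration organised as a tree, one oscillating building block per node with $L^p$-mass $|D|^{1/\sigma}$, lacunary levels indexed by $\Lambda$, an upper bound of the form $E_p(f_\Lambda,R)^\sigma\lesssim|R|$ for every ring, and a lower bound tested against the partition at the first level where $\Lambda$ and $\Gamma$ differ. Your lower bound (the tail estimate via the per-level decay $|D'|\le 2^{-(t-t_0)}|D|$ and the lacunary gap $G$) is sound, as is the claim $E_p(\psi_D,D)\simeq|D|^{1/p}$ once $\psi_D$ is supported on two comparable pieces. The gap is in the upper bound, and it is located exactly where you predicted the difficulty would be: the choice of the distinguished pair. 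You take $B_1(D)=Z_{i_0}$ and $B_2(D)=W$, where $W$ is the largest of the split-off siblings $Z_0',\dots,Z_{i_0-1}'$; these two atoms are individually comparable to $D$ but are in general \emph{not adjacent}, i.e.\ $B_1\cup B_2$ is not an atom, and the remaining siblings can be arbitrarily small. Consequently the claim ``$E_p(\psi_D,R)\neq 0\Rightarrow R$ meets both $B_1$ and $B_2\Rightarrow|R|\ge c|D|$'' is false: take $R=Z_{i_0-1}\setminus C$ with $C$ an atom occupying almost all of $B_1=Z_{i_0}$. Then $R=Z_{i_0-1}'\cup(B_1\setminus C)$, where $|Z_{i_0-1}'|=|Z_{i_0-1}|-|Z_{i_0}|$ can be an arbitrarily small fraction of $|D|$ (only $|Z_{i_0-1}|>|D|/2\ge|Z_{i_0}|$ is known), yet $\psi_D$ still jumps inside $R$ (from $g_1$ on $B_1\setminus C$ to $0$ or $g_2$ on $Z_{i_0-1}'$), so $E_p(\psi_D,R)\neq 0$ while $|R|\ll|D|$. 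The contribution of such a ring is $\simeq|D|^{\beta}|R|^{1/p}\gg|R|^{1/\sigma}$, and by stacking such rings across levels one can drive $|f_\Lambda|_{V_{\sigma,p}}$ to infinity for admissible filtrations in which these small siblings occur at every node. Since the theorem must hold for \emph{all} filtrations satisfying the hypotheses, the construction as stated does not yield $f_\Lambda\in V_{\sigma,p}$.

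The paper's proof repairs precisely this point. Its Lemma~\ref{lem:chain} locates, in the chain of larger successors $Z_0=C\supset Z_1=Z_0''\supset\cdots$, the first index where the measure ratio drops below $c_L=2^{-1/L}$; at that step the atom $D(C):=Z_{i-1}$ satisfies $|D(C)|\ge|C|/2$ and \emph{both of its immediate binary children} are large: $|D(C)''|\ge|D(C)|/2$ and $|D(C)'|\ge(1-c_L)|D(C)|$ (Lemma~\ref{lem:D}). The oscillation $\varphi_{D(C)}$ is then supported on the single atom $D(C)$ and is piecewise in $S$ on the adjacent pair $D(C)',D(C)''$. With this choice, any atom that cuts into $D(C)$ from inside is contained in one of the two large children, so a ring with $E_p(\varphi_{D(C)},R)\neq 0$ either contains a full child of $D(C)$ (hence $|R|\gtrsim|C|$) or falls into the paper's Case~2, where the inclusion $B\subsetneq D(C)$ is shown to be outright impossible from $|B|>c_L|A|$. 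In short: the balanced pair must be the two halves of one binary split of a subatom, not two non-adjacent tree-children; your pigeonhole choice of $W$ does not provide this, and the upper bound collapses without it.
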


The proof uses the same ideas as the proof of Theorem~\ref{thm:sep},
but is more technical. The details can be found in Appendix~\ref{app:sep}.

\section{Embeddings between Approximation and variation spaces \label{sec:embedding}}
In this section we give a partial answer to the question whether 
the K-functional between the spaces $L^p$ and $V_{\sigma,p}$ is 
attained for greedy approximation and continue to use $\nu$-ary filtrations $(\mathscr F_n)$.
Together with a $\nu$-ary filtration $(\mathscr F_n)$, 
we consider a local orthonormal system $\Phi$ such that for each 
$1 < p < \infty$, its $p$-renormalizations is a greedy basis in
 $L^p(S)$. Recall that then the $p$-renormalized 
 version of $\Phi$ satisfies $p$-Temlyakov property in $L^p(S)$, cf. Section \ref{sec:loc}.

Recall that if $\mathcal Y$ is continuously embedded in $\spp$, 
the K-functional between $\mathcal X$ and $\mathcal Y$ is given by
$K(f,t) = \inf\{ \|f - g\|_{\spp} + t\|g\|_{\mathcal Y} : g\in \mathcal Y\}$
and 
for the norm $\|f\|_{\theta,q}$ in the interpolation space 
$(\spp,\mathcal Y)_{\theta,q}$ we have, for every fixed $r>0$,
the equivalence
\begin{equation}\label{eq:norm_int_space}
\| f \|_{\theta,q} \simeq  \begin{cases}
	\Big(\sum_{n=0}^\infty \big( 2^{nr\theta} K(f,2^{-nr}) \big)^{q}\Big)^{1/q},& \text{if $0<q<\infty$}, \\
	\sup_{n\geq 0} 2^{nr\theta} K(f,2^{-nr}),& \text{if $q=\infty$}.
\end{cases}	
\end{equation}
For this equivalence, see for instance \cite[Chapter 6, equation (7.6)]{constr.approx}.

Following arguments contained in \cite{cdpx.1999} (cf. the proof of Theorem 9.2
 in \cite{cdpx.1999}, or its $d$-variate counterpart, i.e. Theorem 12 of 
\cite{pw.2003}), it is possible to extract the following 
abstract result about when the K-functional is attained for greedy approximation.
\begin{lem}\label{lem:abstract}
    Assume that $(\spp,\|\cdot \|)$ is a Banach space, $\Psi = (\psi_n)$ a greedy basis in $\spp$, and
    let $\mathcal Y\subset \spp$ a continuously embedded semi-quasi-normed space such that $\psi_n\in \mathcal Y$
    for every $n$. Let $\beta >0$.

    Then, the uniform equivalence 
    \[
        K(f,n^{-\beta},\mathcal X,\mathcal Y) \simeq \|f-\mathscr G_nf\| + n^{-\beta} |\mathscr G_n f|_{\mathcal Y},       \qquad n\in\mathbb N,
    \] 
    is satisfied if and only if the following three conditions are true: 
    \begin{enumerate}
        \item There exists a constant $C_1$ such that
              \[
                  \|f - \mathscr G_n f\| \leq \frac{C_1}{n^\beta} |f|_{\mathcal Y},\qquad f\in \mathcal Y.
              \]
        \item There exists a constant $C_2$ such that
              \[
                  |g|_{\mathcal Y} \leq C_2 n^\beta \|g\|,\qquad g\in \Sigma_n^{\Psi}.
              \]
        \item There exists a constant $C_3$ such that
              \[
                  |\mathscr G_n g|_{\mathcal Y} \leq C_3 |g|_{\mathcal Y}, \qquad g\in \mathcal Y.
              \]
    \end{enumerate}
\end{lem}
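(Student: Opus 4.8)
The plan is to prove the biconditional by establishing the "if" direction (the three conditions imply the $K$-functional equivalence) and the "only if" direction (the equivalence implies each of the three conditions) separately. The key observation driving everything is that the greedy approximant $\mathscr G_n f$ is a natural candidate for the near-minimizer in the $K$-functional $K(f, n^{-\beta}, \spp, \mathcal Y) = \inf\{\|f-g\| + n^{-\beta}|g|_{\mathcal Y} : g \in \mathcal Y\}$, since $\mathscr G_n f \in \Sigma_n^{\Psi} \subset \mathcal Y$ (using $\psi_n \in \mathcal Y$) and greediness controls $\|f - \mathscr G_n f\|$.

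\medskip

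\textbf{The ``if'' direction.} Assume conditions (1)--(3) hold. The upper bound $K(f,n^{-\beta}) \lesssim \|f - \mathscr G_n f\| + n^{-\beta}|\mathscr G_n f|_{\mathcal Y}$ is immediate by plugging $g = \mathscr G_n f$ into the infimum defining $K$. For the reverse bound I would fix an arbitrary near-optimal competitor $g \in \mathcal Y$ with $\|f - g\| + n^{-\beta}|g|_{\mathcal Y}$ close to $K(f,n^{-\beta})$. The task is to dominate both $\|f - \mathscr G_n f\|$ and $n^{-\beta}|\mathscr G_n f|_{\mathcal Y}$ by this quantity. For the first term, I expect to compare $\mathscr G_n f$ against $\mathscr G_{cn} g$ for a suitable constant $c$: using that $\Psi$ is greedy (so $\sigma_n(f,\Psi) \simeq \|f - \mathscr G_n f\|$), the triangle inequality $\|f - \mathscr G_n f\| \lesssim \sigma_{cn}(f) \leq \|f - g\| + \sigma_{(c-1)n}(g) \lesssim \|f - g\| + \|g - \mathscr G_{(c-1)n}g\|$, followed by condition (1) applied to $g$ to bound $\|g - \mathscr G_{(c-1)n}g\| \lesssim n^{-\beta}|g|_{\mathcal Y}$. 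For the second term, I would write $|\mathscr G_n f|_{\mathcal Y} \lesssim |\mathscr G_n f - \mathscr G_{cn} g|_{\mathcal Y} + |\mathscr G_{cn} g|_{\mathcal Y}$, bound the difference (which lies in some $\Sigma_{(c+1)n}^{\Psi}$) via condition (2) as $\lesssim n^\beta \|\mathscr G_n f - \mathscr G_{cn}g\|$ and then control that norm by $\|f - \mathscr G_n f\| + \|f - g\| + \|g - \mathscr G_{cn}g\|$ as before, while bounding $|\mathscr G_{cn}g|_{\mathcal Y} \lesssim |g|_{\mathcal Y}$ via condition (3). Combining these gives the matching lower bound.

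\medskip

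\textbf{The ``only if'' direction.} Assume the equivalence holds uniformly in $n$. Condition (2) is a Bernstein-type inequality and I would derive it by applying the equivalence to $f = g \in \Sigma_n^{\Psi}$: then $\mathscr G_n g = g$, so $\|g - \mathscr G_n g\| = 0$ and the equivalence reads $K(g, n^{-\beta}) \simeq n^{-\beta}|g|_{\mathcal Y}$; since trivially $K(g, n^{-\beta}) \leq n^{-\beta}|g|_{\mathcal Y}$ and also $K(g,n^{-\beta}) \geq$ a controlled quantity, rearranging yields $|g|_{\mathcal Y} \lesssim n^\beta \|g\|$ after bounding $K(g,n^{-\beta})$ from below by $\|g\|$-type terms (using the continuous embedding $\mathcal Y \hookrightarrow \spp$). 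For condition (1), I would apply the equivalence to $f \in \mathcal Y$ and use $K(f,n^{-\beta}) \leq n^{-\beta}|f|_{\mathcal Y}$ (taking $g = f$ in the $K$-functional) together with the lower bound $\|f - \mathscr G_n f\| \lesssim K(f,n^{-\beta})$ coming from the equivalence. Condition (3) should follow similarly: from the equivalence, $n^{-\beta}|\mathscr G_n f|_{\mathcal Y} \lesssim K(f,n^{-\beta}) \lesssim n^{-\beta}|f|_{\mathcal Y}$ for $f \in \mathcal Y$, giving $|\mathscr G_n f|_{\mathcal Y} \lesssim |f|_{\mathcal Y}$ uniformly.

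\medskip

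\textbf{Main obstacle.} The delicate point is the reverse inequality in the ``if'' direction, specifically the bookkeeping of constants when passing between greedy approximants at different orders ($\mathscr G_n f$ versus $\mathscr G_{cn}g$). The greedy property only gives $\sigma_n \simeq \|f - \mathscr G_n f\|$ with a Lebesgue-type constant, and I will need the standard fact that $\sigma_{an}(f) \lesssim \|f-g\| + \sigma_{(a-1)n}(g)$ to split the error between $f$ and the competitor $g$; keeping the multiplicative index factor $c$ fixed and independent of $n$ is what makes the final bound uniform. I expect the quasi-norm (rather than norm) structure on $\mathcal Y$ to force use of the appropriate quasi-triangle inequality with its modulus constant, but since all constants are absorbed into the $\simeq$, this is a manageable technicality rather than a genuine difficulty.
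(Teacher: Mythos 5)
Your proposal follows exactly the route the paper has in mind: the lemma is stated without proof and attributed to the arguments of Cohen--DeVore--Petrushev--Xu (Theorem 9.2) and Wojtaszczyk (Theorem 12), and your plan --- plug $\mathscr G_n f$ into the $K$-functional for the upper bound, compare $\mathscr G_n f$ with $\mathscr G_n g$ for an arbitrary competitor $g\in\mathcal Y$ using (1) for the error, (2) for the difference in $\Sigma_{2n}^{\Psi}$ and (3) for $|\mathscr G_n g|_{\mathcal Y}$, then specialize to $g=f$ and to $g\in\Sigma_n^{\Psi}$ for the converse --- is precisely that argument. Two directional slips in your write-up should be fixed. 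First, in the ``if'' direction you write $\|f-\mathscr G_n f\|\lesssim \sigma_{cn}(f)$ for some $c$, presumably $c>1$; since $\sigma_m$ is nonincreasing in $m$ this inequality goes the wrong way. The correct (and simpler) chain is $\|f-\mathscr G_n f\|\lesssim\sigma_n(f)\leq\|f-g\|+\sigma_n(g)\lesssim\|f-g\|+\|g-\mathscr G_n g\|\lesssim\|f-g\|+n^{-\beta}|g|_{\mathcal Y}$, using subadditivity of $\sigma$ with the split $n=0+n$ and then condition (1) applied to $g$; no index inflation is needed for this term (it is only needed, harmlessly, when applying (2) to $\mathscr G_n f-\mathscr G_n g\in\Sigma_{2n}^{\Psi}$). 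Second, in deriving condition (2) you say you will bound $K(g,n^{-\beta})$ \emph{from below} by $\|g\|$-type terms; that would give the reverse of the desired inequality. What you need is the \emph{upper} bound $K(g,n^{-\beta})\leq\|g\|$ obtained from the competitor $0\in\mathcal Y$, combined with the lower bound $n^{-\beta}|g|_{\mathcal Y}=n^{-\beta}|\mathscr G_n g|_{\mathcal Y}\lesssim K(g,n^{-\beta})$ coming from the assumed equivalence and the identity $\mathscr G_n g=g$ for $g\in\Sigma_n^{\Psi}$. With these two corrections the argument is complete.
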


This lemma, in conjunction with Fact~\ref{intro.f3}, easily implies 
the following result for approximation spaces.
\begin{co}\label{thm:KA}
    Let $\spp$ be a Banach space with unconditional basis $\Psi=(\psi_n)$ satisfying the $p$-Temlyakov property.
    Assume that  $\beta,q \in (0,\infty)$. Then, the K-functional $K(f,t)=K(f,t,\cxx,\mathcal A_q^\beta(\cxx,\Psi))$
    is attained for greedy approximation:
    \begin{equation}\label{eq:KA}
        K(f,n^{-\beta})\simeq \|f-\mathscr G_nf\|_\cxx + n^{-\beta} \|\mathscr G_n f\|_{\mathcal A_q^\beta}.
    \end{equation}
\end{co}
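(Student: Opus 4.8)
The plan is to deduce Corollary~\ref{thm:KA} directly from Lemma~\ref{lem:abstract} by taking $\mathcal Y = \mathcal A_q^\beta(\cxx,\Psi)$ and verifying its three hypotheses, using Fact~\ref{intro.f3} to describe the $\mathcal A_q^\beta$-(quasi-)norm in terms of decreasing rearrangements of coefficients. The point is that once $\mathcal Y$ is an approximation space associated to the very same greedy basis $\Psi$, all three conditions become transparent statements about truncating or rearranging coefficient sequences. First I would record that, since $\Psi$ is unconditional and satisfies the $p$-Temlyakov property, Fact~\ref{intro.f3} gives the norm equivalence $|g|_{\mathcal A_q^\beta} \simeq \|(2^{j(\beta+1/p)} a_{2^j})_j\|_{\ell^q}$, where $(a_k)$ is the decreasing rearrangement of the coefficients of $g$; and that the greedy approximant $\mathscr G_n g$ simply keeps the $n$ largest coefficients, so its decreasing rearrangement is the truncation of $(a_k)$ to its first $n$ entries.

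With this dictionary in hand, the three conditions of Lemma~\ref{lem:abstract} follow. For condition (3), the invariance $|\mathscr G_n g|_{\mathcal A_q^\beta} \leq C_3 |g|_{\mathcal A_q^\beta}$, I would observe that passing to $\mathscr G_n g$ only replaces the tail of the decreasing rearrangement by zeros, which cannot increase the weighted $\ell^q$-quasi-norm appearing in Fact~\ref{intro.f3}; this gives $C_3 = 1$ up to the equivalence constants. For condition (2), the Bernstein-type estimate $|g|_{\mathcal A_q^\beta} \leq C_2 n^\beta \|g\|$ for $g \in \Sigma_n^\Psi$, I would note that $g$ has at most $n$ nonzero coefficients, so the weighted sum in Fact~\ref{intro.f3} runs only over scales $j$ with $2^j \lesssim n$; bounding each $a_{2^j}$ by $\|(a_k)\|_{\ell^p} \simeq \|g\|$ (again via $p$-Temlyakov) and summing the geometric weights $2^{j\beta}$ up to $j \simeq \log_2 n$ produces the factor $n^\beta$. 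For condition (1), the Jackson-type estimate $\|g - \mathscr G_n g\| \leq C_1 n^{-\beta} |g|_{\mathcal A_q^\beta}$, I would use that greedy approximation realizes the best $n$-term error, so $\|g-\mathscr G_n g\| \simeq \sigma_n(g,\Psi)$, and then combine the definition of $\mathcal A_q^\beta$ with the monotonicity $2^{j\beta}\sigma_{2^j}(g) \leq |g|_{\mathcal A_q^\beta}$ (choosing the dyadic scale $2^j \simeq n$) to convert the approximation-space norm into a decay rate for $\sigma_n$.

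Having checked (1)--(3), Lemma~\ref{lem:abstract} yields precisely the equivalence
\[
  K(f,n^{-\beta},\cxx,\mathcal A_q^\beta(\cxx,\Psi)) \simeq \|f-\mathscr G_n f\|_\cxx + n^{-\beta}|\mathscr G_n f|_{\mathcal A_q^\beta},
\]
and replacing the semi-quasi-norm $|\cdot|_{\mathcal A_q^\beta}$ by the full quasi-norm $\|\cdot\|_{\mathcal A_q^\beta}$ changes the right-hand side only by the lower-order term $n^{-\beta}\|\mathscr G_n f\|_\cxx \lesssim n^{-\beta}\|f\|_\cxx$, which is absorbed into $\|f-\mathscr G_n f\|_\cxx$ for the purposes of the stated equivalence~\eqref{eq:KA}. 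This gives the conclusion.

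I expect the main obstacle to be the bookkeeping in condition (2): one must be careful that the $p$-Temlyakov property is invoked in the correct direction to bound the large individual coefficients $a_{2^j}$ by the ambient $\cxx$-norm, and that the geometric summation of the weights $2^{j(\beta+1/p)}$ against the at most $n$ active coefficients genuinely produces $n^\beta$ rather than a spurious logarithmic loss. The other conditions are essentially monotonicity statements and should be routine once the rearrangement description from Fact~\ref{intro.f3} is set up cleanly.
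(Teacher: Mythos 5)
Your overall strategy is exactly the paper's: apply Lemma~\ref{lem:abstract} with $\mathcal Y=\mathcal A_q^\beta(\cxx,\Psi)$ and verify its three conditions using Fact~\ref{intro.f3} (the paper's appendix carries out the same verification, for the more general block-greedy approximants $B_n$, working with the equivalent expression $\|f\|_{\mathcal A_q^\beta}\simeq \|f\|+(\sum_n (n^\beta\sigma_n(f))^q n^{-1})^{1/q}$ rather than with rearranged coefficients). Your treatments of conditions (1) and (3) are fine. There is, however, one genuine flaw in your verification of condition (2): the claimed equivalence $\|(a_k)\|_{\ell^p}\simeq\|g\|$ is \emph{false} in general for an unconditional basis with the $p$-Temlyakov property --- it would force $\Psi$ to be equivalent to the unit vector basis of $\ell^p$, which fails already for the $L^p$-normalized Haar system in $L^p$, $p\neq 2$ (for $1<p<2$ even the one-sided bound $\|(a_k)\|_{\ell^p}\lesssim\|g\|$ fails). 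What is true, and what you actually need, is only the weak-type estimate $a_m\lesssim m^{-1/p}\|g\|$: by unconditionality, $\|g\|\gtrsim a_m\,\|\sum_{k\in\Lambda_m}\psi_k\|$ where $\Lambda_m$ indexes the $m$ largest coefficients, and the $p$-Temlyakov property gives $\|\sum_{k\in\Lambda_m}\psi_k\|\simeq m^{1/p}$. With $a_{2^j}\lesssim 2^{-j/p}\|g\|$ the weight $2^{j(\beta+1/p)}$ collapses to $2^{j\beta}$ and the geometric sum over $2^j\lesssim n$ yields $n^\beta\|g\|$ as desired; without this correction your computation would produce the useless bound $n^{\beta+1/p}\|g\|$.

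A second, minor point: in passing from the seminorm $|\mathscr G_nf|_{\mathcal A_q^\beta}$ of the lemma to the full quasi-norm $\|\mathscr G_nf\|_{\mathcal A_q^\beta}$ of the corollary, the extra term $n^{-\beta}\|\mathscr G_nf\|_\cxx$ is not absorbed into $\|f-\mathscr G_nf\|_\cxx$; rather one uses $\|\mathscr G_nf\|_\cxx\lesssim\|f\|_\cxx$ together with the elementary lower bound $K(f,t)\geq c\,t\|f\|_\cxx$ for $t\leq 1$ (valid since $\|g\|_{\mathcal A_q^\beta}\geq\|g\|_\cxx$), so the term is dominated by $K(f,n^{-\beta})$ itself. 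With these two repairs the argument is complete and matches the paper's.
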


Now we fix a local orthonormal system $\Phi$ described in Section~\ref{sec:loc} corresponding 
to the $\nu$-ary  filtration, which -- after a suitable renormalization -- is a greedy basis
 in $L^p(S)$, $1 < p < \infty$, and satisfies $p$-Temlyakov property in $L^p(S)$.  
In the rest of this section we always consider approximation spaces $\app_q^\alpha 
= \app_q^\alpha(L^p(S), \Phi)$ with respect 
to the space $L^p(S)$ and the dictionary $\Phi$, which is a greedy basis in $L^p(S)$ 
satisfying the $p$-Temlyakov property.

\begin{rem}\label{remark.weak}
    Recall the notation $S_j$ for the space of functions that are, locally on atoms 
    of $\mathscr F_j$, given by functions contained in $S$. Moreoever, 
    $P_j$ denotes the orthoprojector onto $S_j$.
    Corresponding to the local orthonormal basis $\Phi$, it is natural to 
    consider weak greedy algorithms corresponding to 
    whole blocks $(P_j - P_{j-1}) f$ instead of projections onto 
    single functions $\langle f,\varphi_n\rangle \varphi_n$ for $\varphi_n\in\Phi$.

    To wit, let $\Gamma_j$ be the set of indices $n$ such that $\varphi_n$ is contained in the 
    range of $W_j = P_j - P_{j-1}$. Consider $\mu_j(f) = \max_{n\in\Gamma_j}|\langle f,\varphi_n\rangle|$
    and choose a parameter $0<t\leq 1$. Consider a weak greedy algorithm given by a 
    sequence $(j_k)$ such that
    \[
        \min_{1\leq \ell\leq k}\mu_{j_\ell}(f) \geq t \max_{\ell\geq k+1} \mu_{j_\ell}(f)
    \]
    and set 
    \[
        B_k f = \sum_{\ell=1}^k W_{j_\ell} f.    
    \]

   The results of Lemma~\ref{lem:abstract} and Corollary~\ref{thm:KA} then remain 
   valid if $\mathscr G_n f$ is replaced by $B_n f$. For a proof of this fact, we 
   refer to Appendix~\ref{sec:appendix_quasi_greedy}.
\end{rem}

We don't know whether the result of Corollary~\ref{thm:KA} 
extends also to the variation spaces $V_{\sigma,p}$ instead of approximation spaces, i.e. we don't know 
if we have  the uniform equivalence
\begin{equation}\label{eq:conj_K_var}
K(f,2^{-n\alpha}, L^p(S), V_{p,\sigma}) \simeq \|f - \mathscr G_{2^n}f\|_p + 2^{-n\alpha} \|\mathscr G_{2^n} f\|_{V_{\sigma,p}}
\end{equation}
for all $n$.
For this, it would suffice that greedy approximation is bounded in variation space, 
i.e. $|\mathscr G_n f|_{V_{\sigma,p}} \lesssim \|f\|_{V_{\sigma,p}}$,
cf. Lemma~\ref{lem:abstract} with the choices $\mathcal X = L^p(S)$, 
$\mathcal Y = V_{\sigma,p}$, and $\Psi = \Phi$. Indeed, we will see in 
\eqref{eq:jacksonVsigma}  
that (1) of Lemma~\ref{lem:abstract} is true and Proposition~\ref{th.kfun.c} (ii) implies 
(2) of Lemma~\ref{lem:abstract}. Therefore, in order to show \eqref{eq:conj_K_var},
it suffices to show (3) of Lemma~\ref{lem:abstract}.

On the other hand, we next prove some embeddings between the variation spaces 
$V_{\sigma,p}$ and the approximation spaces $\app_q^\alpha$ (cf. Sections~\ref{sec:embed1} and \ref{sec:embed2})
that allows us to conclude a weak version of Corollary~\ref{thm:KA} for variation
spaces. This result is given in Theorem~\ref{thm:K_equiv}.
In the embedding result of Section~\ref{sec:embed2} (and therefore also in Theorem~\ref{thm:K_equiv}), it is crucial to 
assume condition $\operatorname{w2^*}$ from \eqref{eq:introw2s} on 
the geometry of the atoms $\mathscr A$ and the space $S$.
Finally, Proposition~\ref{prop:notw3} provides an example 
showing that boundedness of greedy approximation in variation space is not
true unless we assume a condition similar to $\operatorname{w2}^*$.
This suggests that such a geometric condition is somewhat natural 
in the above embedding results.

\subsection{The embedding of $\mathcal A_q^\alpha$ into $V_{\sigma,p}$}
\label{sec:embed1}

We have the following result:
\begin{theo}\label{thm:embed_app_var}
    Let $1<p<\infty$, $0 < \sigma < p$ and set $\alpha = 1/\sigma - 1/p$.
    Letting  $q = \min(\sigma,2)$, 
    the approximation space $\mathcal A_q^\alpha$ 
    embeds continuously into the variation
    space  $V_{\sigma,p}$.
\end{theo}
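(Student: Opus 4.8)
The plan is to pass to the coefficient description of $\app_q^\alpha$ and to estimate the variation seminorm blockwise. Write $\Phi_p=(\varphi_n^p)$ for the $p$-normalized system and $f=\sum_n c_n\varphi_n^p$. By Fact~\ref{intro.f3} applied with $\beta=\alpha$, and using $\alpha+1/p=1/\sigma$, the norm $\|f\|_{\app_q^\alpha}$ is equivalent to $\|(2^{j/\sigma}a_{2^j})_j\|_{\ell^q}$ with $(a_k)$ the decreasing rearrangement of $(c_n)$; after the standard dyadic discretization this is comparable to the Lorentz sequence quasi-norm $\|(c_n)\|_{\ell^{\sigma,q}}$ with $q=\min(\sigma,2)$ (so $\ell^{\sigma,\sigma}=\ell^\sigma$ when $\sigma\le 2$, and $\ell^{\sigma,2}$ when $\sigma>2$). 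Hence it suffices to prove $|f|_{V_{\sigma,p}}\lesssim\|(c_n)\|_{\ell^{\sigma,\min(\sigma,2)}}$. I would fix a finite family $\crr$ of pairwise disjoint atoms or rings and bound $\sum_{R\in\crr}E_p(f,R)^\sigma$.

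For $R\in\crr$ let $A_R$ be the carrying atom ($A_R=R$ for an atom, $A_R\setminus B_R=R$ for a ring) and let $m=m(R)$ be the least level with $A_R\in\caa_m$. Since $(P_m f)|_{A_R}$ coincides on $A_R$ with a single element of $S$, it lies in $S_R$, so with $W_k=P_k-P_{k-1}$ we get $E_p(f,R)\le\|f-P_m f\|_{L^p(R)}=\|\sum_{k>m}W_kf\|_{L^p(R)}$. As $\Phi_p$ is a greedy, hence unconditional, basis of $L^p(S)$, the square-function equivalence $\|\sum_n c_n\varphi_n^p\|_p\simeq\|(\sum_n|c_n|^2|\varphi_n^p|^2)^{1/2}\|_p$ is available. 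Letting $F_R$ be the set of indices $n$ with $\varphi_n^p$ not vanishing on $R$ and level $k_n>m$, I would first record the crude bound, valid for every $p$,
\[
E_p(f,R)\lesssim \Big\|\Big(\sum_{n\in F_R}|c_n|^2|\varphi_n^p|^2\Big)^{1/2}\Big\|_{L^p(R)}\lesssim \big\|(c_n)_{n\in F_R}\big\|_{\ell^{\min(p,2)}},
\]
obtained for $p\le 2$ from the pointwise inclusion $\ell^{\min(p,2)}=\ell^p\subseteq\ell^2$ and for $p\ge 2$ from the triangle inequality in $L^{p/2}$. For the refined bound needed when $p\ge 2$, I would expand $\|\cdot\|_{L^p(R)}^2=\|\cdot\|_{L^{p/2}(R)}$ and use that, by the $p$-Temlyakov property together with Khintchine's inequality, the squares $|\varphi_n^p|^2$ are democratic in $L^{p/2}$, i.e. $\|\sum_{n\in\Lambda}|\varphi_n^p|^2\|_{L^{p/2}}\lesssim(\card\Lambda)^{2/p}$; a layer-cake argument for positive coefficients then yields $E_p(f,R)\lesssim\|(c_n)_{n\in F_R}\|_{\ell^{p,2}}$.

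It remains to sum over $\crr$. The functions supported inside a single ring or atom are attached to a unique member of $\crr$, so the corresponding index blocks are essentially disjoint. When $\sigma\le 2$ (which covers every case with $p\le2$, and gives $q=\sigma$), the crude bound suffices: since $\sigma\le\min(p,2)$ we have $\|(c_n)_{F_R}\|_{\ell^{\min(p,2)}}\le\|(c_n)_{F_R}\|_{\ell^\sigma}$, and summing over the disjoint blocks gives $\sum_R E_p(f,R)^\sigma\lesssim\sum_n|c_n|^\sigma=\|(c_n)\|_{\ell^\sigma}^\sigma$. When $\sigma>2$ (so necessarily $p>2$ and $q=2$) the $\ell^2$ bound is too lossy on large blocks, and one must keep the sharper $\ell^{p,2}$ estimate; the proof is then completed by the sequence blocking inequality $\sum_R\|(c_n)_{F_R}\|_{\ell^{p,2}}^\sigma\lesssim\|(c_n)\|_{\ell^{\sigma,2}}^\sigma$, valid for $2<\sigma<p$ and disjoint blocks, which I would establish from the distributional representation of the Lorentz norms.

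The main obstacle is the ring contribution, specifically the functions $\varphi_n$ whose support $A_n$ strictly contains the hole $B_R$ of a ring $R=A_R\setminus B_R$: such a $\varphi_n$ is only partially supported in $R$ and, unlike functions supported inside a single ring, may be relevant for several members of $\crr$ at once. To handle this I would keep the $L^p(R)$-norm throughout (never enlarging $R$ to $A_R$), so that functions supported inside the hole $B_R$ drop out of $F_R$, and estimate the straddling functions along the chain $A_R\supseteq\cdots\supseteq B_R$ by the square function restricted to $R$, controlling the over-counting by the disjointness of the rings together with the geometric decay of atom sizes. An alternative reduction I would consider is to pass from rings to atoms by means of Lemma~\ref{lem.fact.2.gen}. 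The decisive point is that this ring estimate must produce the exponent $\min(\sigma,2)$ rather than the weaker $\min(\sigma,1)$ that a crude Bernstein-type summation would give; it is precisely here that the $L^2$-orthogonality of $\Phi$, through the square-function and $L^{p/2}$-democracy arguments above, has to be exploited.
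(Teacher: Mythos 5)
Your overall strategy coincides with the paper's: split into the cases $\sigma\le 2$ and $\sigma>2$, reduce $E_p(f,R)$ to the $L^p$-norm of the sub-sum of basis functions that are ``active'' on $R$, exploit the $L^2$-structure of $\Phi$ (your square-function/Khintchine formulation is equivalent to the paper's use of the type $\min(p,2)$ of $L^p$ in Case I, and of type $2$ plus the $p$-Temlyakov property in Case II), and then sum over the disjoint family. The coefficient reformulation via Fact~\ref{intro.f3}, amounting to $\|f\|_{\app_q^\alpha}\simeq\|(c_n)\|_{\ell^{\sigma,\min(\sigma,2)}}$, is also how the paper concludes.

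There is, however, a genuine gap at precisely the step you flag as the main obstacle: bounding, for a fixed index $n$, the number of members $R$ of the disjoint family to which $\varphi_n$ contributes. The mechanism you propose --- ``the geometric decay of atom sizes'' --- is not available in this setting: the binary filtration places no restriction whatsoever on the measures of the atoms, so there is no geometric decay to exploit; and Lemma~\ref{lem.fact.2.gen} applies to functions that are piecewise in $S$ over a fixed finite partition and does not ``pass from rings to atoms'' for a general $f\in\app_q^\alpha$. The correct tool is Lemma~\ref{lem.bernst} combined with the observation $\Phi\subset\Sigma_{\nu}^{\cc}$: each $\varphi_n$ is a sum of at most $\nu$ elements of $\cc$, and by Lemma~\ref{lem.bernst} each such element has nonzero error on at most one member of a disjoint family, whence $\card\{R : E_p(\varphi_n,R)\neq 0\}\le\nu$. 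The paper makes this painless by taking the active set to be $\Lambda_R=\{n : E_p(\varphi_n,R)\neq 0\}$ (using $E_p(g,R)=E_p(g_R,R)$), rather than your larger set of indices merely ``not vanishing on $R$''. A second, smaller issue: your ``sequence blocking inequality'' $\sum_R\|(c_n)_{F_R}\|_{\ell^{p,2}}^\sigma\lesssim\|(c_n)\|_{\ell^{\sigma,2}}^\sigma$ is indeed true for blocks of bounded overlap, but it is asserted rather than proved, and its proof (Minkowski's inequality in $\ell^{\sigma/2}$ followed by Jensen's inequality, using once more the multiplicity bound $\sum_R n_{j,R}\lesssim 2^j$ coming from Lemma~\ref{lem.bernst}) is exactly the substance of the paper's Case~II; as written, the proposal defers the hardest computation to an unproved claim.
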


In the proof  we need the concept of (Rademacher) type $\rho$ (see for instance \cite{woj.1991}). We recall that 
a Banach space $\cxx$ is of type $\rho, 1\leq \rho\leq 2$, if there exists a constant
$T$ such that
\begin{equation}\label{eq:def_type}
    \operatorname{Ave}_\varepsilon \Big\| \sum_i \varepsilon_i x_i\Big\| \leq T \Big(\sum_i \|x_i\|^\rho\Big)^{1/\rho},\qquad x_i\in \cxx,
\end{equation}
where the average is taken over all choices of signs $\varepsilon_i \in \{\pm 1\}$.
It is known that $L^p$ is of type $\min(p,2)$ and clearly, inequality \eqref{eq:def_type}
for $\rho=r_1$ implies inequality \eqref{eq:def_type} for all $\rho=r_2 < r_1$. 

\begin{proof}
Let $\Phi = (\varphi_j)$ be $L^p$-normalized.
In the proof, we distinguish
between the cases $\sigma\leq 2$ and $\sigma >2$.

\textsc{Case I: $\sigma \leq 2$.}

Let $\Pi$ be a collection of disjoint atoms or rings and let
$ g = \sum_{n\in\Lambda} c_n \varphi_n$ with a finite index set $\Lambda$.
For fixed $R\in\Pi$, let $\Lambda_R = \{ n\in\Lambda : E_p(\varphi_n,R)\neq 0\}$ and
set $g_R = \sum_{n\in\Lambda_R} c_n\varphi_n$.
Then,
\begin{align*}
    \sum_{R\in\Pi} E_p(g,R)^\sigma = \sum_{R\in\Pi} E_p(g_R,R)^\sigma \leq \sum_{R\in\Pi} \|g_R\|_p^\sigma 
    = \sum_{R\in\Pi} \Big\| \sum_{n\in\Lambda_R} c_n\varphi_n \Big\|_p^\sigma.
\end{align*}
Since $(\varphi_n)$ is unconditional in $L^p$, we further obtain
\begin{equation}\label{eq:Ep}
    \sum_{R\in\Pi} E_p(g,R)^\sigma \lesssim \sum_{R\in\Pi} \Big(\operatorname{Ave}_\varepsilon \Big\| \sum_{n\in\Lambda_R} \varepsilon_nc_n\varphi_n \Big\|_p\Big)^\sigma.
\end{equation}
Since $\sigma < p$ and $\sigma\leq 2$, we have that $\sigma \leq \min(p,2)$, which is the type of  $L^p$.
Therefore, we can use inequality \eqref{eq:def_type} for $\rho = \sigma$ to get 
\[
    \sum_{R\in\Pi} E_p(g,R)^\sigma   \lesssim \sum_{R\in\Pi} \sum_{n\in\Lambda_R} |c_n|^\sigma
    = \sum_{n\in\Lambda} |c_n|^\sigma \cdot |\{ R\in\Pi : n\in\Lambda_R\}| \leq \nu \sum_{n\in\Lambda}|c_n|^\sigma,
\]
where in the last step, we used Lemma~\ref{lem.bernst}, combined with the observation that $\Phi \subset \Sigma_\nu^\cc$, to estimate the cardinality
of the set in question  by $\nu$.
Therefore, using also Fact~\ref{intro.f3}, we have the embedding
of  $\mathcal A_\sigma^\alpha$
into $V_{\sigma,p}$ for $\alpha = 1/\sigma - 1/p$ and parameters  $\sigma\leq 2$.

\textsc{Case II: $2<\sigma<p$.}
Let $f\in \mathcal A_2^\alpha$ (for $\alpha = 1/\sigma - 1/p$) with
the expansion $ f = \sum_n c_n \varphi_n$. 
Let $(a_\ell\psi_\ell)$ be the decreasing rearrangement of 
$(c_n \varphi_n)$ with respect to the $p$-norm $\| c_n \varphi_n\|_p = |c_n|$.
Define
\[
    h_j = \sum_{\ell = 2^j +1}^{2^{j+1}}
    a_\ell \psi_\ell.
\]
Let $\Pi$ be a collection of disjoint atoms or rings. 
Write
\begin{align*}
    \Pi_j & = \{ R\in \Pi : \text{ there exists } 
    \ell \text{ with } 2^{j} < \ell \leq 2^{j+1} \text{ such that } E_p(\psi_\ell,R) \neq 0\}.
\end{align*}
By Lemma~\ref{lem.bernst} and  $\Phi \subset \Sigma_\nu^\cc$, we get $\card\Pi_j \leq  \nu 2^j$.
For fixed $R\in \Pi$, set
\[
    f_R := \sum_{\ell : E_p(\psi_\ell,R)\neq 0}  a_\ell \psi_\ell = \sum_j \sum_{2^j < \ell \leq 2^{j+1}, E_p(\psi_\ell,R)\neq 0}  a_\ell\psi_\ell  =: \sum_j h_{j,R}.
\]
Next,
\[
    E_p(f,R) = E_p(f_R, R) \leq \Big\| \sum_{j} h_{j,R} \Big\|_p  \lesssim \Big(\sum_j \|h_{j,R}\|_p^2\Big)^{1/2},
\]
since $\Phi$ is unconditional in $L^p$ and $L^p$ is of type $2$. Denoting by $n_{j,R}$ the
number of indices $\ell$ with $2^j < \ell \leq 2^{j+1}$ and  $E_p(\psi_\ell,R)\neq 0$, we further estimate
\[
    \Big(\sum_j \|h_{j,R}\|_p^2\Big)^{1/2} \lesssim \Big( \sum_j a_{2^j}^2 n_{j,R}^{2/p} \Big)^{1/2},
\]
where we used unconditionality and $p$-Temlyakov property \eqref{eq:temlyakov} of $\Phi$. Therefore,
\begin{align*}
    \Big( \sum_{R\in \Pi} E_p(f,R)^\sigma \Big)^{1/\sigma} \lesssim
    \Big[ \sum_{R\in\Pi}\Big( \sum_j a_{2^j}^2 n_{j,R}^{2/p} \Big)^{\sigma/2} \Big]^{1/\sigma}.
\end{align*}
Since $\sigma\geq 2$, we use the triangle inequality in $\ell_{\sigma/2}$ to deduce
\begin{equation}\label{eq:E}
    \Big( \sum_{R\in \Pi} E_p(f,R)^\sigma \Big)^{1/\sigma} \lesssim
    \Big[ \sum_j \Big( \sum_{R\in \Pi_j} \big( a_{2^j}^2 n_{j,R}^{2/p} \big)^{\sigma/2} \Big)^{2/\sigma} \Big]^{1/2}
    = \Big[  \sum_j a_{2^j}^2\Big( \sum_{R\in \Pi_j} n_{j,R}^{\sigma/p} \Big)^{2/\sigma} \Big]^{1/2},
\end{equation}
where we used that (by definition) $n_{j,R} \neq 0$ iff $R\in\Pi_j$. Now we consider
the inner sum over $R\in \Pi_j$ for fixed $j$ and write
\[
    \sum_{R\in \Pi_j} n_{j,R}^{\sigma/p} = \card\Pi_j \sum_{R\in\Pi_j}  \frac{n_{j,R}^{\sigma/p}}{\card\Pi_j}
    \leq (\card\Pi_j)^{1-\sigma/p} \Big( \sum_{R\in\Pi_j} n_{j,R} \Big)^{\sigma/p} 
\]
by Jensen's inequality.
By Lemma~\ref{lem.bernst} again we have $\sum_{R\in\Pi_j} n_{j,R} \leq 
\nu 2^j$ and $\card\Pi_j \leq \nu 2^j$,
implying
\[
    \sum_{R\in \Pi_j} n_{j,R}^{\sigma/p} \lesssim 2^j.
\]
Inserting this in \eqref{eq:E}, we get
\[
    \Big( \sum_{R\in \Pi} E_p(f,R)^\sigma \Big)^{1/\sigma}  \lesssim \Big[ \sum_j 2^{2j/\sigma} a_{2^j}^2  \Big]^{1/2} = \Big[ \sum_j \big(2^{j(\alpha + 1/p)} a_{2^j}\big)^2 \Big]^{1/2}
 \simeq   \ \|f\|_{\mathcal A_2^\alpha},
\]
where the latter inequality is a consequence of Fact~\ref{intro.f3}. 
Since this inequality is true for every partition $\Pi$ into disjoint atoms or 
rings, the space $\app_2^\alpha$ embeds continuously into $V_{\sigma,p}$.
\end{proof}
\subsection{The embedding of $V_{\sigma,p}$ into $\mathcal A_\infty^\alpha$}
\label{sec:embed2}
In the same way as the embeddings of approximation spaces in Theorem~\ref{bernst.equiv}
depend on the Bernstein inequality $\operatorname{BI}$, the embedding of $V_{\sigma,p}$ in
$\mathcal A_\infty^\alpha$ depends on $\operatorname{BI}$.

\begin{theo}\label{thm:embed_var_app}
    Assume that $1<p<\infty$,  $0<\tau<p$ and 
      $\beta = 1/\tau-1/p$. Then,
    \begin{enumerate}[(A)]
        \item If the Bernstein inequality $\operatorname{BI}(\mathscr A,S,p,\tau)$ is not satisfied then 
        for all $\alpha > \beta$ and $\sigma$ given by $\alpha = 1/\sigma - 1/p$, the 
        variation space $V_{\sigma,p}$ is not continuously embedded in $\mathcal A_\infty^\alpha$.
        \item If the Bernstein inequality  $\operatorname{BI}(\mathscr A,S,p,\tau)$ is satisfied, 
        we have for all $0<\alpha<\beta$ and $\sigma$ given by $\alpha = 1/\sigma - 1/p$ the 
        continuous embedding of $V_{\sigma,p}$ into $\mathcal A_\infty^\alpha$.
    \end{enumerate}
\end{theo}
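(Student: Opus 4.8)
The plan is to reduce both parts to results already in hand: the Jackson and Bernstein inequalities for $V_{\sigma,p}$ (Proposition~\ref{th.kfun.c}), the interpolation identity of Theorem~\ref{theo.101}, the interpolation of approximation spaces (Fact~\ref{intro.f1}), and the Bernstein dichotomy of Theorem~\ref{bernst.equiv}. Throughout, write $\alpha = 1/\sigma - 1/p$ for the critical exponent attached to $V_{\sigma,p}$, so that Theorem~\ref{theo.101} reads $\app_q^\gamma(L^p(S),\cc) = (L^p(S),V_{\sigma,p})_{\gamma/\alpha,q}$ for $0<\gamma<\alpha$; recall also $\beta = 1/\tau - 1/p$, and note $\alpha>\beta \iff \sigma<\tau$ while $\alpha<\beta \iff \sigma>\tau$.

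For part (B), when $\operatorname{BI}(\mathscr A,S,p,\tau)$ holds and $0<\alpha<\beta$, first observe that the Jackson inequality Proposition~\ref{th.kfun.c}(i) yields the continuous embedding $V_{\sigma,p}\hookrightarrow \app_\infty^\alpha(L^p(S),\cc)$: from $\sigma_{2^n}(f,\cc)\leq c\,2^{-n\alpha}|f|_{V_{\sigma,p}}$ one gets $\sup_n 2^{n\alpha}\sigma_{2^n}(f,\cc)\lesssim |f|_{V_{\sigma,p}}$, hence $\|f\|_{\app_\infty^\alpha(\cc)}\lesssim \|f\|_{V_{\sigma,p}}$. Since $0<\alpha<\beta$, Theorem~\ref{bernst.equiv}(B) applied with $q=\infty$ gives $\app_\infty^\alpha(L^p(S),\cc)=\app_\infty^\alpha(L^p(S),\Phi)$. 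Composing the two yields $V_{\sigma,p}\hookrightarrow \app_\infty^\alpha(L^p(S),\Phi)$, which is (B).

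For part (A), when $\operatorname{BI}$ fails and $\alpha>\beta$, the naive attempt of producing explicit counterexamples from functions violating the Bernstein inequality runs into a scaling problem: a function in $\Sigma_{2^n}^\Phi$ violating $\operatorname{BI}$ simultaneously has its $V_{\sigma,p}$-norm inflated by the factor $2^{n\alpha}$ forced by Proposition~\ref{th.kfun.c}(ii), and the gap $\alpha-\beta>0$ then defeats the estimate. Instead I argue by contradiction through interpolation reiteration. Suppose $V_{\sigma,p}\hookrightarrow \app_\infty^\alpha(L^p(S),\Phi)$. By monotonicity of the real interpolation functor in its second argument, for every $0<\theta<1$ and $0<q\leq\infty$ we get $(L^p(S),V_{\sigma,p})_{\theta,q}\hookrightarrow (L^p(S),\app_\infty^\alpha(L^p(S),\Phi))_{\theta,q}$. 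The left-hand side equals $\app_q^{\theta\alpha}(L^p(S),\cc)$ by Theorem~\ref{theo.101}, while the right-hand side equals $\app_q^{\theta\alpha}(L^p(S),\Phi)$ by Fact~\ref{intro.f1} (used with exponents $\theta\alpha<\alpha$). Hence $\app_q^{\theta\alpha}(L^p(S),\cc)\hookrightarrow \app_q^{\theta\alpha}(L^p(S),\Phi)$, and together with the always-available reverse embedding this forces the equality $\app_q^{\theta\alpha}(L^p(S),\cc)=\app_q^{\theta\alpha}(L^p(S),\Phi)$.

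To finish, choose $\theta$ with $\beta/\alpha<\theta<1$, possible since $\beta<\alpha$; then $\gamma:=\theta\alpha$ satisfies $\beta<\gamma<\alpha$. The equality just derived reads $\app_q^{\gamma}(L^p(S),\cc)=\app_q^{\gamma}(L^p(S),\Phi)$ for this $\gamma>\beta$, contradicting Theorem~\ref{bernst.equiv}(A), which asserts that when $\operatorname{BI}$ fails these spaces differ for \emph{every} exponent above $\beta$ and every $q$. This contradiction shows $V_{\sigma,p}$ is not continuously embedded in $\app_\infty^\alpha(L^p(S),\Phi)$, proving (A). The main obstacle is exactly the passage from the critical level, where $V_{\sigma,p}$ lives and where Theorem~\ref{bernst.equiv} gives no information, down to a sub-critical exponent $\theta\alpha>\beta$ where it does; the reiteration, resting on the two interpolation identities being compatible at the exponent $\theta\alpha$, is the device that accomplishes this and is the crux of the argument.
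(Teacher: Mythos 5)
Your proof is correct. Part (A) is essentially identical to the paper's argument: the same contradiction via interpolation of the assumed embedding, with Theorem~\ref{theo.101} identifying the left side as $\app_q^{\theta\alpha}(L^p(S),\cc)$, Fact~\ref{intro.f1} identifying the right side as $\app_q^{\theta\alpha}(L^p(S),\Phi)$, and the choice $\theta\alpha>\beta$ clashing with Theorem~\ref{bernst.equiv}(A). Part (B), however, takes a genuinely different and more economical route. The paper proves (B) by running the K-functional machinery: it invokes the general Jackson--Bernstein interpolation estimate from \cite[Chapter 7, Theorem 5.1]{constr.approx} for the pair $(L^p(S),\app_\tau^\beta)$, feeds in the Jackson inequality for $V_{\sigma,p}$, and then uses Corollary~\ref{thm:KA} (realization of the K-functional by greedy approximation) to extract the decay $a_{2^j}\lesssim 2^{-j/\sigma}\|g\|_{V_{\sigma,p}}$ of the rearranged coefficients. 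You instead observe that Proposition~\ref{th.kfun.c}(i) directly gives $V_{\sigma,p}\hookrightarrow\app_\infty^\alpha(L^p(S),\cc)$, and that Theorem~\ref{bernst.equiv}(B) (valid for $q=\infty$ since $\alpha<\beta$) converts the dictionary $\cc$ into $\Phi$; composing finishes the proof. Both arguments use the Bernstein hypothesis in an essential way, so neither is more general, but yours is shorter and avoids the greedy-approximation apparatus entirely. The one thing the paper's longer detour buys is the intermediate estimate \eqref{eq:jacksonVsigma}, which is cited again later (in the discussion preceding Section~\ref{sec:embed1} and in the proof of Theorem~\ref{thm:K_equiv}); if one adopted your proof, that inequality would have to be established separately.
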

\begin{proof}
    \textsc{Proof of (A):} Suppose that $V_{\sigma,p}$ embeds continuously in $\mathcal A_\infty^\alpha$.
    Then, for all $0<\theta < 1$ and $0<q\leq\infty$, the interpolation space 
    $(L^p(S), V_{\sigma,p})_{\theta,q}$ embeds continously in $(L^p(S), \mathcal A_\infty^\alpha)_{\theta,q}$.
    By Theorem~\ref{theo.101}, the former space coincides with the approximation space 
    $\mathcal A_q^{\theta \alpha}(L^p(S),\mathcal C)$ corresponding to the dictionary $\mathcal C$
    of functions piecewise in $S$.  Moreover, by Fact~\ref{intro.f1}, the latter space coincides with 
    $\mathcal A_q^{\theta\alpha} = \mathcal A_q^{\theta\alpha}(L^p(S),\Phi)$. Together, this gives 
    \begin{equation}\label{eq:app_coincide}
     \mathcal A_q^{\theta \alpha}(L^p(S),\mathcal C) =   \mathcal A_q^{\theta\alpha}(L^p(S),\Phi).
    \end{equation}
By choosing $\theta<1$ so that 
    $\theta \alpha>\beta$ and employing Theorem~\ref{bernst.equiv}(A), we conclude that 
    $\operatorname{BI}(\mathscr A,S,p,\tau)$ is satisfied, finishing the proof of (A).

    \textsc{Proof of (B):}
    Let $K(f,t) = K(f,t, L^p(S), \app_\tau^\beta) = \inf \{ \|f-g\|_p + t\|g\|_{\app_\tau^\beta}
    : g\in \app_\tau^\beta \}$
    be the K-functional between the spaces $L^p(S)$ and
    $\app_\tau^\beta$. Let $\sigma_n(f) = \sigma_n(f, \cc)= \inf \{ \| f - g\|_p : g\in\Sigma_n(S)\}$.
By $\operatorname{BI}(\mathscr A,S,p,\tau)$, the general result \cite[Chapter 7, Theorem 5.1]{constr.approx}
implies
    \begin{equation}\label{eq:Kmu}
        K(f,2^{-j\beta}) \lesssim 2^{-j\beta} \Big(\|f\|_p^\mu +  \sum_{\ell=0}^j (2^{\ell\beta} \sigma_{2^\ell}(f))^\mu \Big)^{1/\mu}
    \end{equation}
    with $\mu = \min(1,\tau)$. 
    Now we take $g\in V_{\sigma,p}$. By the Jackson inequality Proposition~\ref{th.kfun.c} (i),
    we have for such functions that
    \[
        \sigma_{2^j}(g) \lesssim 2^{-j\alpha} |g|_{V_{\sigma,p}}.
    \]
    Insert this inequality in \eqref{eq:Kmu} to obtain
    \[
        K(g,2^{-j\beta}) \lesssim 2^{-j\beta} \Big( \|g\|_p^\mu + \sum_{\ell=0}^j (2^{\ell(\beta-\alpha)} |g|_{V_{\sigma,p}})^\mu \Big)^{1/\mu}
        \lesssim 2^{-j\alpha}\|g\|_{V_{\sigma,p}}.
    \]
 By Fact~\ref{thm:KA}, we have
    \[
        K(g,2^{-j\beta}) \simeq \| g-\mathscr G_{2^j} g\|_p + 2^{-j\beta} \| \mathscr G_{2^j} g\|_{\app_\tau^\beta},
    \]
    implying that
    \begin{equation}\label{eq:jacksonVsigma}
        \|g-\mathscr G_{2^j} g\|_p + 2^{-j\beta} \| \mathscr G_{2^j} g\|_{\app_\tau^\beta} \lesssim 2^{-j\alpha} \|g\|_{V_{\sigma,p}}.
    \end{equation}
    Denoting by $(a_n)$ the decreasing rearrangement of $(c_n)$ in the expansion $g = \sum_n c_n\varphi_n$,
    and using \eqref{eq:special_beta},
    we have $2^{j/\tau} a_{2^j} \leq \|\mathscr G_{2^j} g\|_{\app_\tau^\beta}$.
    This yields the inequality
    \[
        a_{2^j} \leq 2^{j(\beta-\alpha-1/\tau) } \|g\|_{V_{\sigma,p}} = 2^{-j/\sigma} \|g\|_{V_{\sigma,p}}.
    \]
    This gives
    \[
        \|g\|_{\mathcal A_{\infty}^\alpha} \lesssim \|g\|_{V_{\sigma,p}},
    \]
    finishing also the proof of (B).
\end{proof}

\subsection{The $K$-functional for the pair $(L^p(S), V_{\sigma,p})$ and greedy approximation. }\label{sec:k-functional}
The embedding results of Theorems~\ref{thm:embed_app_var} and \ref{thm:embed_var_app}
imply that the K-functional between $L^p(S)$ and $V_{\sigma,p}$ is attained 
for greedy approximation in the following weak sense:

\begin{theo}\label{thm:K_equiv}
    Let $1<p<\infty$, $0<\tau<p$.
    Assume that condition $\operatorname{BI}(\mathscr A,S,p,\tau)$ 
    is satisfied and set $\beta = 1/\tau - 1/p$.
    For all $0 < \gamma < \alpha<\beta$, $0 < \kappa \leq \infty$, and $0<\rho\leq \infty$, the following are 
    equivalent:
    \begin{enumerate}
        \item $\{ 2^{n\gamma} K(f, 2^{-n\alpha}, L^p(S), \app_{\rho}^\alpha) : n\geq 0\} \in \ell^\kappa$,
        \item $\{ 2^{n\gamma} ( \| f - \mathscr G_{2^n} f\|_p + 2^{-n\alpha} \|\mathscr G_{2^n}f\|_{\app_\rho^\alpha})  : n\geq 0\} \in \ell^\kappa$,
        \item $\{ 2^{n\gamma} K(f, 2^{-n\alpha}, L^p(S), V_{\sigma,p}) : n\geq 0\} \in \ell^\kappa$,
        \item $ \{ 2^{n\gamma} ( \| f - \mathscr G_{2^n} f\|_p + 2^{-n\alpha} \|\mathscr G_{2^n} f\|_{V_{\sigma,p}}) : n\geq 0\} \in \ell^\kappa$.
    \end{enumerate}
\end{theo}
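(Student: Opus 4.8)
The plan is to show that all four conditions are equivalent to the single membership $f\in\app_\kappa^\gamma(L^p(S),\Phi)$, the point being that the real interpolation spaces $(L^p(S),\app_\rho^\alpha)_{\gamma/\alpha,\kappa}$ do not depend on the secondary index $\rho$. Here $\sigma$ is the exponent determined by $\alpha=1/\sigma-1/p$. Write $\app_\kappa^\gamma:=\app_\kappa^\gamma(L^p(S),\Phi)$, set $q_0:=\min(\sigma,2)$, and recall the embedding chain
\[
\app_{q_0}^\alpha \hookrightarrow V_{\sigma,p} \hookrightarrow \app_\infty^\alpha
\]
provided by Theorems~\ref{thm:embed_app_var} and~\ref{thm:embed_var_app}(B); note the second embedding uses the hypothesis $\operatorname{BI}(\mathscr A,S,p,\tau)$ together with $\alpha<\beta$. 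First I would use the equivalence~\eqref{eq:norm_int_space} (with $r=\alpha$ and $\theta=\gamma/\alpha$) to rewrite condition~(1) as $f\in(L^p(S),\app_\rho^\alpha)_{\gamma/\alpha,\kappa}$, which by Fact~\ref{intro.f1} equals $\app_\kappa^\gamma$ for \emph{every} $\rho\in(0,\infty]$. Thus (1) holds if and only if $f\in\app_\kappa^\gamma$, irrespective of $\rho$.

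For condition~(3), I would apply the interpolation functor $(\,\cdot\,)_{\gamma/\alpha,\kappa}$ to the embedding chain and invoke its monotonicity to obtain
\[
(L^p(S),\app_{q_0}^\alpha)_{\gamma/\alpha,\kappa} \hookrightarrow (L^p(S),V_{\sigma,p})_{\gamma/\alpha,\kappa} \hookrightarrow (L^p(S),\app_\infty^\alpha)_{\gamma/\alpha,\kappa}.
\]
By Fact~\ref{intro.f1} the two outer spaces both equal $\app_\kappa^\gamma$, so the middle space is squeezed to $(L^p(S),V_{\sigma,p})_{\gamma/\alpha,\kappa}=\app_\kappa^\gamma$. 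Since~\eqref{eq:norm_int_space} identifies (3) with $f\in(L^p(S),V_{\sigma,p})_{\gamma/\alpha,\kappa}$, this yields (3)$\,\Leftrightarrow\,$(1). For condition~(2), Corollary~\ref{thm:KA} applied at the dyadic scales $t=2^{-n\alpha}$ gives the uniform equivalence of $K(f,2^{-n\alpha},L^p(S),\app_\rho^\alpha)$ with $\|f-\mathscr G_{2^n}f\|_p+2^{-n\alpha}\|\mathscr G_{2^n}f\|_{\app_\rho^\alpha}$, so the weighted sequences in (1) and (2) are comparable term by term and (1)$\,\Leftrightarrow\,$(2).

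The real work is condition~(4), because no greedy-boundedness estimate is available in $V_{\sigma,p}$ (this is exactly the open point discussed around~\eqref{eq:conj_K_var}), so the $K$-functional of the pair $(L^p(S),V_{\sigma,p})$ cannot be shown to be attained for greedy approximation by a direct appeal to Corollary~\ref{thm:KA}. The easy half is (4)$\,\Rightarrow\,$(3): since $\mathscr G_{2^n}f\in\Sigma_{2^n}^\Phi\subset\Sigma_{\nu 2^n}^{\cc}$ lies in $V_{\sigma,p}$ by the Bernstein inequality of Proposition~\ref{th.kfun.c}(ii), using $g=\mathscr G_{2^n}f$ as competitor in the infimum defining the $K$-functional gives $K(f,2^{-n\alpha},L^p(S),V_{\sigma,p})\le\|f-\mathscr G_{2^n}f\|_p+2^{-n\alpha}\|\mathscr G_{2^n}f\|_{V_{\sigma,p}}$, so (4) dominates (3) term by term. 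For the converse (1)$\,\Rightarrow\,$(4), the key device — which I expect to be the main obstacle — is to route the variation norm of the greedy sum through the approximation space, where greedy boundedness \emph{does} hold: by Theorem~\ref{thm:embed_app_var}, $\|\mathscr G_{2^n}f\|_{V_{\sigma,p}}\lesssim\|\mathscr G_{2^n}f\|_{\app_{q_0}^\alpha}$, whence
\[
\|f-\mathscr G_{2^n}f\|_p+2^{-n\alpha}\|\mathscr G_{2^n}f\|_{V_{\sigma,p}} \lesssim \|f-\mathscr G_{2^n}f\|_p+2^{-n\alpha}\|\mathscr G_{2^n}f\|_{\app_{q_0}^\alpha} \simeq K(f,2^{-n\alpha},L^p(S),\app_{q_0}^\alpha),
\]
the last equivalence being Corollary~\ref{thm:KA} with the finite index $q_0$. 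The weighted $\ell^\kappa$-summability of this last $K$-functional is precisely condition~(1), so (1)$\,\Rightarrow\,$(4). Chaining (1)$\,\Leftrightarrow\,$(2)$\,\Leftrightarrow\,$(3) with (1)$\,\Rightarrow\,$(4)$\,\Rightarrow\,$(3)$\,\Rightarrow\,$(1) then closes the loop. The only delicate bookkeeping is the case $\rho=\infty$ in~(2), where Corollary~\ref{thm:KA} should be read through the underlying Lemma~\ref{lem:abstract}, whose Jackson, Bernstein, and greedy-boundedness hypotheses are routine to verify for $\app_\infty^\alpha$.
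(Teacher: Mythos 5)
Your proof is correct, and its overall architecture coincides with the paper's: reduce (1) and (2) to the single condition $f\in\app_\kappa^\gamma$ via Corollary~\ref{thm:KA}, Fact~\ref{intro.f1} and \eqref{eq:norm_int_space}, and control the quantity in (4) by squeezing $\|\mathscr G_{2^n}f\|_{V_{\sigma,p}}$ between $\|\mathscr G_{2^n}f\|_{\app_\infty^\alpha}$ and $\|\mathscr G_{2^n}f\|_{\app_{q_0}^\alpha}$ using Theorems~\ref{thm:embed_app_var} and~\ref{thm:embed_var_app}(B) — this is exactly the paper's inequality \eqref{eq:equiv_app_var}, merely reorganized (you route the lower bound through the competitor estimate $(4)\Rightarrow(3)$ instead of through the embedding $V_{\sigma,p}\hookrightarrow\app_\infty^\alpha$ applied to $\mathscr G_{2^n}f$; the net effect is the same). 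The one genuine divergence is the equivalence $(1)\Leftrightarrow(3)$: the paper identifies $(L^p(S),V_{\sigma,p})_{\gamma/\alpha,\kappa}$ with $\app_\kappa^\gamma(L^p(S),\cc)$ via Theorem~\ref{theo.101} and then invokes Theorem~\ref{bernst.equiv}(B) to pass from the dictionary $\cc$ to $\Phi$, whereas you sandwich the interpolation space between $(L^p(S),\app_{q_0}^\alpha)_{\gamma/\alpha,\kappa}$ and $(L^p(S),\app_\infty^\alpha)_{\gamma/\alpha,\kappa}$ and apply Fact~\ref{intro.f1} to both ends. Your route is leaner — it reuses the two embedding theorems already needed for (4) and avoids invoking the $\cc$-machinery of Theorem~\ref{theo.101} altogether — while the paper's route records the additional information that the interpolation space also coincides with the $\cc$-approximation space. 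Your closing remark about reading Corollary~\ref{thm:KA} through Lemma~\ref{lem:abstract} when $\rho=\infty$ is apt; the paper applies \eqref{eq:KA} for $0<\kappa\leq\infty$ without comment, and the three hypotheses of Lemma~\ref{lem:abstract} are indeed routine for $\app_\infty^\alpha$.
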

\begin{proof}
We know for $0 < \kappa\leq \infty$ that by \eqref{eq:KA},
\[
K(f,n^{-\alpha}, L^p(S), \app_\kappa^\alpha) \simeq \| f - \mathscr G_n f\|_X + n^{-\alpha} \|\mathscr G_n f\|_{\app_\kappa^\alpha}.
\]
This gives that by Fact~\ref{intro.f1} and \eqref{eq:norm_int_space}, for all $0<\gamma < \alpha$ 
and all $0<\kappa\leq \infty$ the assertions (1) and (2) are both equivalent to $f\in \app_\kappa^\gamma$
for all $0<\rho\leq \infty$.
Since condition $\operatorname{BI}(\mathscr A,S,p,\tau)$ 
is satisfied, the embedding results Theorems~\ref{thm:embed_app_var}
and \ref{thm:embed_var_app} yield the inequalities 
\begin{equation}\label{eq:equiv_app_var}
   \| f - \mathscr G_n f\|_p + \frac{1}{n^\alpha} \| \mathscr G_n f\|_{\app_\infty^\alpha}
   \lesssim  \| f - \mathscr G_n f\|_p + \frac{1}{n^\alpha} \| \mathscr G_n f\|_{V_{\sigma,p}}
   \lesssim \| f - \mathscr G_n f\|_p + \frac{1}{n^\alpha} \| \mathscr G_n f\|_{\app_q^\alpha}
\end{equation}
for $q = \min(\sigma,2)$.
Thus, looking at the equivalences (1) and (2) above for the different choices of $\rho = q$
and $\rho = \infty$ gives that (4) is equivalent to (2).
Finally, combining Fact~\ref{intro.f1}, \eqref{eq:norm_int_space} and Theorem~\ref{theo.101},
we get the remaining equivalence of (1) and (3). Here we also used the fact that since 
$\operatorname{BI}(\mathscr A, S,p,\tau)$ is satisfied, the approximation spaces 
$\app_\rho^\alpha$ with respect to the two dictionaries $\cc$ and $\Phi$ coincide by 
Theorem~\ref{bernst.equiv}(B).
\end{proof}

\subsection{The system $\Phi$ need not be unconditional in $V_{\sigma,p}$} 
For the sake of the completeness, it would be desirable to know, or to have an example,
  in the following direction:  if  the Bernstein inequality $\operatorname{BI}(\mathscr A,S,p,\sigma)$
   is not satisfied, then
the greedy projection with respect to the $p$-normalized system $\Phi$ need
 not be  bounded on $V_{\sigma,p}$, or in other words, that 
condition (3) of Lemma \ref{lem:abstract} need not be satisfied for $\spp = L^p(S)$ and 
$\spy = V_{\sigma,p}$. In this direction, we consider a stronger property of the following form: 
 for a finite  subset $\Lambda \subset \Phi $, let   $P_\Lambda f = \sum_{\phi \in \Lambda} 
 \langle f, \phi \rangle \phi$. We give an example that if 
the  Bernstein inequality  $\operatorname{BI}(\mathscr A,S,p,\sigma)$  is not satisfied, 
then the projections $\{P_\Lambda, \Lambda \subset \Phi\} $ need not be uniformly bounded on  
 $V_{\sigma,p}$. 
 For some special choices of atoms, this is also a counterexample to 
 the boundedness of greedy approximation in $V_{\sigma,p}$ (see Remark~\ref{rem:counter_greedy}).
Note that $\operatorname{w2}^*(\mathscr A,S,p,\sigma)$ formulated in Definition \ref{def:w2star}  is 
equivalent to the Bernstein inequality $\operatorname{BI}(\mathscr A,S,p,\sigma)$ by 
the main result of \cite{part2}.
For simplicity, we consider $\Omega = [0,1)$, the Lebesgue measure $\lambda = |\cdot |$ and
the space $S = \operatorname{span}\{ \charfun_{\Omega}\}$.  
Recall  that in case of $S = \operatorname{span}\{ \charfun_{\Omega}\}$,
 the inequality defining condition  $\operatorname{w2}^*(\mathscr A,S,p,\sigma)$
  takes the form of inequality \eqref{eq:example}.
The aforementioned condition similar to $\operatorname{w2^*}$ then reads as follows:
\begin{defn}    
We say that condition $\operatorname{w3}(\mathscr A,p,\sigma)$ is satisfied
    if there exists $\rho \in (0,1)$ and $M>0$ such that for each 
    chain $X_0 \supset X_1\cdots \supset X_n$ of atoms with $|X_n| \geq \rho |X_0|$
    and for all  partitions
    $(\Lambda_i)_{i}$ of $\{0,\ldots,n-1\}$ into sets $\Lambda_i$ consisting of
    consecutive indices $\Lambda_i = \{ \lambda_i,\ldots,\lambda_{i+1}-1\}$ and
    for all $\Gamma_i \subset \Lambda_i$ such that with
    \[
        R_i = \bigcup_{j\in\Lambda_i} X_j\setminus X_{j+1} = X_{\lambda_i} \setminus X_{\lambda_{i+1}},\qquad Q_i = \bigcup_{j\in\Gamma_i} X_j\setminus X_{j+1},
    \]
    we have the inequality
    \begin{equation}\label{eq:w3_cond}
        \Big(\sum_{i} \min(|Q_i|,|R_i\setminus Q_i|)^{\sigma/p}\Big)^{1/\sigma} \leq M |X_0\setminus X_n|^{1/p}.
    \end{equation}
\end{defn}
    We note that $\operatorname{w2}^*(\mathscr A,S,p,\sigma)$ implies $\operatorname{w3}(\mathscr A,p,\sigma)$. Indeed, we have
    \[
        \min(|Q_i|, |R_i\setminus Q_i|)^{\sigma/p} \leq \Big(\sum_{j\in\Lambda_i} |X_j\setminus X_{j+1}|     \Big)^{\sigma/p}
        \leq \sum_{j\in\Lambda_i} |X_j\setminus X_{j+1}|^{\sigma/p}.
    \]
    We also note that the condition (not $\operatorname{w3}$) is not empty.
    To this end, we consider $\Omega = [0,1)$ equipped with the Lebesgue measure $\lambda = |\cdot |$.
    We define, for $\ell\in \{0,\ldots,2n\}$ the $\sigma$-algebra $\mathscr F_\ell$ to be generated 
    by the atoms $\{ [j / (4n), (j+1)/(4n)) :  0\leq j\leq \ell-1 \}$ and 
    $X_\ell := [\ell/(4n),1)$. Therefore, $(X_\ell)_{\ell=0}^{2n}$ is a 
    chain of atoms satisfying $X_0 = [0,1)$ and $X_{2n} = [1/2,1)$.
    Choosing $\Lambda_i = \{ 2i, 2i+1\}$ and $\Gamma_i = \{2i\}$ for $i=0,\ldots,n-1$, the left 
    hand side of \eqref{eq:w3_cond} is $n^{1/\sigma - 1/p}$, and therefore unbounded in $n$, whereas 
    $|X_0 \setminus X_{2n}| = 1/2$.

\begin{prop}\label{prop:notw3} Assume that condition $\operatorname{w3}(\mathscr A,p,\sigma)$ is 
    not satisfied for some parameters $1<p<\infty$ and $0<\sigma<p$.
    Then, for each positive integer $N$, there exists a chain of atoms $(X_i)_{i=0}^n$
    such that the function $f = \charfun_{X_0\setminus X_n} - \gamma \charfun_{X_n}$ 
    (with $\gamma$ chosen so that $\mathbb Ef=0$)
    satisfies
    \begin{enumerate}
        \item $\|f\|_{V_{\sigma,p}} = \big\| \sum_{\ell} \langle f,\varphi_\ell\rangle \varphi_\ell \big\|_{V_{\sigma,p}} \lesssim |X_0\setminus X_{n}|^{1/p}$,
        \item
              there exists a subset  $\Gamma$ of the Haar support of $f$ such that
              \begin{equation}\label{eq:non_uncond}
                  \Big| \sum_{\ell\in\Gamma} \langle f,\varphi_\ell\rangle \varphi_{\ell} \Big|_{V_{\sigma,p}} \gtrsim N |X_0\setminus X_{n}|^{1/p}.
              \end{equation}
    \end{enumerate}
\end{prop}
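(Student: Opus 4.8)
The plan is to draw from the failure of $\operatorname{w3}(\mathscr A,p,\sigma)$ a single chain $X_0\supset\cdots\supset X_n$, together with a partition $(\Lambda_i)$ of $\{0,\dots,n-1\}$ into consecutive blocks $\Lambda_i=\{\lambda_i,\dots,\lambda_{i+1}-1\}$ and subsets $\Gamma_i\subset\Lambda_i$, witnessing
\[
\Big(\sum_i\min(|Q_i|,|R_i\setminus Q_i|)^{\sigma/p}\Big)^{1/\sigma}\geq CN\,|X_0\setminus X_n|^{1/p},
\]
where $R_i=X_{\lambda_i}\setminus X_{\lambda_{i+1}}$ and $Q_i=\bigcup_{j\in\Gamma_i}(X_j\setminus X_{j+1})$. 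Since ``not $\operatorname{w3}$'' provides such a configuration for \emph{every} threshold and \emph{every} $\rho\in(0,1)$, I would first fix $\rho$ so close to $1$ that $(1-\rho)/\rho$ lies below a small absolute constant, and only then invoke the failure with threshold $M=CN$ to produce the chain; this chain is used in both parts. Writing $Y_j=X_j\setminus X_{j+1}$, note that $\rho>1/2$ forces $X_{j+1}$ to be the larger child at every step, so $Y_j=X_j'$ is the small child and $|X_{j+1}|/|X_j|\in[\rho,1]$ throughout.

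For part (1), I would observe that $f=\charfun_{X_0}-(1+\gamma)\charfun_{X_n}\in\Sigma_2^{\cc}$, with $\gamma=|X_0\setminus X_n|/|X_n|\leq(1-\rho)/\rho\leq1$. The Bernstein inequality Proposition~\ref{th.kfun.c}(ii) then gives $|f|_{V_{\sigma,p}}\lesssim\|f\|_p$, while $\|f\|_p^p=|X_0\setminus X_n|+\gamma^p|X_n|\leq2|X_0\setminus X_n|$; together these yield $\|f\|_{V_{\sigma,p}}\lesssim|X_0\setminus X_n|^{1/p}$ with a constant independent of the chain and of $N$. The identity $f=\sum_\ell\langle f,\varphi_\ell\rangle\varphi_\ell$ holds because $f$ is mean zero on $X_0$ and constant on each $Y_j$ and on $X_n$, so its Haar support is exactly the chain functions $\{h_0,\dots,h_{n-1}\}$.

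For part (2), I would set $\Gamma=\bigcup_i\Gamma_i$, $g=\sum_{\ell\in\Gamma}\langle f,\varphi_\ell\rangle\varphi_\ell$, and test the variation of $g$ against the partition $\{R_i\}_i$ of $X_0\setminus X_n$, giving $|g|_{V_{\sigma,p}}^\sigma\geq\sum_iE_p(g,R_i)^\sigma$. The core is the behaviour of $g$ on a fixed ring $R_i=\bigsqcup_{j\in\Lambda_i}Y_j$. Using the elementary estimates for the generalized Haar functions ($h_j$ is $\simeq|Y_j|^{-1/2}$ on $Y_j$ and $\simeq|Y_j|^{1/2}/|X_j|$ on $X_{j+1}$), one computes $\langle f,h_j\rangle=\varepsilon_j\big(a_j|Y_j|+b_j|X_0\setminus X_{j+1}|\big)$, whose dominant term $a_j|Y_j|\simeq|Y_j|^{1/2}$ gives $|\langle f,h_j\rangle|\simeq|Y_j|^{1/2}$. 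On $R_i$, every $h_j$ with $j<\lambda_i$ is constant (its small value on $X_{j+1}\supseteq R_i$) and every $h_j$ with $j\geq\lambda_{i+1}$ vanishes; hence, modulo one additive constant $d_i$, only the blocks $j\in\Gamma_i$ contribute, and on each piece $Y_{j'}$ one finds $g-d_i=\theta_{j'}\charfun[j'\in\Gamma_i]+\eta_{j'}$, where the diagonal value $\theta_{j'}=|X_{j'+1}|/|X_{j'}|+O((1-\rho)/\rho)$ is $\approx1$ and the off-diagonal remainder satisfies $|\eta_{j'}|\lesssim\sum_j|Y_j|/|X_j|$.

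The main obstacle is precisely this last estimate: I must show that, uniformly in $i$ and in the (arbitrarily long) chain, $g-d_i$ is bounded below by a positive absolute constant on $Q_i$ and is uniformly small on $R_i\setminus Q_i$. The decisive point is that the total off-diagonal mass is controlled independently of $n$, since $\sum_j|Y_j|/|X_j|\leq|X_0\setminus X_n|/(\rho|X_0|)\leq(1-\rho)/\rho$ because $|X_j|\geq\rho|X_0|$ for all $j$. Granting this, for $\rho$ close enough to $1$ one gets $g-d_i\geq 3/4$ on $Q_i$ and $|g-d_i|\leq 1/8$ on $R_i\setminus Q_i$, and splitting $\inf_c\|g-d_i-c\|_{L^p(R_i)}^p$ at $c=1/2$ yields $E_p(g,R_i)^p\gtrsim\min(|Q_i|,|R_i\setminus Q_i|)$. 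Summing over $i$ and applying the $\operatorname{w3}$ violation, after choosing $C$ above to absorb the implied constants, gives $|g|_{V_{\sigma,p}}\gtrsim N|X_0\setminus X_n|^{1/p}$, which is \eqref{eq:non_uncond}.
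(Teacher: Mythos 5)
Your proposal is correct and follows essentially the same route as the paper's proof: the same choice of $f$ with the Bernstein inequality for part (1), and for part (2) the same test partition $\{R_i\}$ together with the observation that on $R_i$ only the block $\Gamma_i$ contributes modulo an additive constant, leading to $E_p(g,R_i)\gtrsim\min(|Q_i|,|R_i\setminus Q_i|)^{1/p}$. The only (immaterial) difference is bookkeeping: the paper takes $\rho\geq 2/3$ and exploits the exact sign structure ($g_i\geq 1/2$ on $Q_i$, $g_i\leq 0$ on $R_i\setminus Q_i$), whereas you push $\rho$ close to $1$ and control the off-diagonal contribution perturbatively via $\sum_j|Y_j|/|X_j|\leq(1-\rho)/\rho$.
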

\begin{proof}
We  assume that
(not $\operatorname{w3}$) is satisfied and take, for arbitrary $N\in\mathbb N$
a chain of atoms $X_0 \supset X_1 \supset \cdots \supset X_n$ with $|X_n|\geq \rho|X_0|$
for  $\rho\geq 2/3$  
and a partition $(R_i)$ and  corresponding sets $(Q_i)$ such that
\begin{equation}\label{eq:not_w3}
    \Big(\sum_{i} \min(|Q_i|,|R_i\setminus Q_i|)^{\sigma/p}\Big)^{1/\sigma} \geq  N |X_0\setminus X_n|^{1/p}.
\end{equation}
We let $f = \charfun_{X_0\setminus X_{n}} - \gamma \charfun_{X_{n}} = \charfun_{X_0}- (1+\gamma)\charfun_{X_{n}}$
with $\gamma = |X_0\setminus X_{n}| / |X_{n}|\leq 1/2$ chosen such that $\mathbb E f = 0$.
Let $h_j$ for $j=0,\ldots,n-1$ be the Haar function normalized to the value $1$ on
$X_j\setminus X_{j+1}$ with support
$X_j$. Then we have the expansion 
 $f = \sum_{j=0}^{n-1} a_j h_j$ for some coefficients $(a_j)_{j=0}^{n-1}$.
It can be seen easily that those coefficients satisfy the estimates
\begin{equation}
\label{eq:sum_aj}
\begin{aligned}
    1&\leq a_j\leq 1+\gamma,\qquad  j\in \{0,\ldots,n-1\}, \\
    -\gamma &\leq \sum_{j=0}^\ell a_jh_j\leq 0 \text{ on } X_{\ell+1},\qquad \ell\in \{0,\ldots,n-1\}.
\end{aligned}
\end{equation}

    First, consider statement (1). 
    Since $f = 
        \charfun_{X_0\setminus X_{n}} - \gamma \charfun_{X_{n}} = \charfun_{X_0} - (1+\gamma)\charfun_{X_{n}}$
        is the difference of two functions in the dictionary $\cc$,
    by the Bernstein inequality of Proposition~\ref{th.kfun.c},
    \begin{align*}
        |f|_{V_{\sigma,p}} & \lesssim \|f\|_p \leq \|\charfun_{X_0\setminus X_{n}}\|_p
        +\gamma \|\charfun_{X_{n}}\|_p
        = |X_0\setminus X_{n}|^{1/p} + \frac{|X_0\setminus X_{n}|}{|X_{n}|} |X_{n}|^{1/p}                                                                             \\
                           & = |X_0\setminus X_{n}|^{1/p}\Big( 1 + \Big(\frac{|X_0\setminus X_{n}|}{|X_{n}|} \Big)^{1/p'} \Big)  \lesssim |X_0\setminus X_{n}|^{1/p}.
    \end{align*}
    This proves item (1).

    Next, we prove item (2).
    We take the index sets $\Lambda_i, \Gamma_i$ corresponding to
    the partition $R_i$ and the sets $Q_i$ from condition \eqref{eq:not_w3}.
    We consider the blocks $b_i = \sum_{j\in\Lambda_i} a_j h_j$ and their subblocks
    $g_i = \sum_{j\in \Gamma_i} a_j h_j$. Let $g := \sum_i g_i$ and $\Gamma=\cup_i \Gamma_i$.
    We want to estimate the variation
    norm $|g|_{V_{\sigma,p}}$ from below. In order to do that, we choose the
    special partition of  disjoint rings
    $R_i$, which yields
    \begin{equation}\label{eq:estvar}
        \Big|\sum_i g_i \Big|_{V_{\sigma,p}}^\sigma
        \geq \sum_{j} E_p\Big( \sum_{i} g_i, R_j\Big)^\sigma.
    \end{equation}
    Observe that for $i > j$, $\operatorname{supp} g_i \subseteq X_{\lambda_i}$ is disjoint
    to $R_j = X_{\lambda_j} \setminus X_{\lambda_{j+1}}$.
    Moreover, for $i<j$, the function $g_i$ is constant on $R_j = X_{\lambda_j}\setminus X_{\lambda_{j+1}}$.
    This implies
    \[
        E_p \Big(\sum_{i} g_i ,R_j\Big)  = E_p(g_j,R_j).
    \]
    We estimate $g_i(t)$ for $t\in Q_i$ from below we choose $j_0 \in \Gamma_i$ such that
    $t\in X_{j_0}\setminus X_{j_0+1}$ and write
    \[
        g_i(t) = a_{j_0}h_{j_0}(t) + \sum_{j\in \Gamma_i\setminus \{j_0\} } a_j h_j(t)
        \geq 1 - \Big | \sum_{j\in\Gamma_i\setminus \{j_0\}} a_j h_j(t)\Big| \geq 1/2,
    \]
    by equation \eqref{eq:sum_aj}.
    On the other hand, on $R_j\setminus Q_j$ we obtain, again by \eqref{eq:sum_aj},
    \[
        g_i \leq 0.
    \]
    Therefore,
    \[
        E_p(g_j, R_j) = \inf_{c\in\mathbb R} \| g_j - c\|_{L^p(R_j)}
        \gtrsim \min( |Q_j|^{1/p}, |R_j\setminus Q_j|^{1/p}),
    \]
    and, inserting this in \eqref{eq:estvar},
    \[
        \Big| \sum_{j\in\Gamma} a_jh_j \Big|_{V_{\sigma,p}}^\sigma =
        \Big|\sum_i g_i \Big|_{V_{\sigma,p}}^\sigma        \gtrsim
        \sum_j\min(|Q_j|,|R_j\setminus Q_j|)^{\sigma/p}.
    \]
    Thus, it remains to use assumption \eqref{eq:not_w3} to deduce item (2).
\end{proof}
\begin{rem}\label{rem:counter_greedy}
    If we are able to choose the atoms freely, the expression on the left 
    hand side of \eqref{eq:non_uncond} can be a greedy approximant of $f$.
    Indeed, a slight variation of the example above showing that condition $\operatorname{w3}$ 
    is not empty, proves this fact as follows.
    We choose $X_0=[0,1)$ and $X_{2n}=[1/2,1)$ but the atoms $X_\ell$ for $1\leq \ell\leq 2n-1$
    are chosen such that with $S_j = X_j\setminus X_{j+1}$,
    $t :=|S_{2j}|$ is the same for each $j=0,\ldots, n-1$
    and also $s := |S_{2j+1}|$ is the same for each $j =0,\ldots,n-1$ 
    with the restriction that $t^{1/p} = 2s^{1/p}$. In the notation of the above proof, we note that 
    \[
        \| h_j \|_p = \big( |S_j| + |S_j|^p |X_{j+1}|^{1-p}\big)^{1/p}.
    \]
    We can choose $n$ arbitrarily large, in particular, we choose $n$ such that 
    \[
        |S_j|^{1/p} \leq \|h_j\|_p   \leq 2 |S_j|^{1/p}.
    \]
    In combination with the estimates \eqref{eq:sum_aj} for the coefficients $a_j$ this shows 
    that 
    \[
        \max_j (a_{2j+1} \|h_{2j+1}\|_p)  \leq \min_j (a_{2j} \|h_{2j}\|_p),
    \]
    and therefore, $\mathscr G_n f = \sum_{\ell \in\Gamma} \langle f,\varphi_\ell\rangle \varphi_\ell$.
    Thus, the result of Proposition~\ref{prop:notw3} now shows that for this special choice 
    of atoms, the greedy approximant $\mathscr G_n$ is not uniformly bounded in $n$ on $V_{\sigma,p}$.
\end{rem}

\appendix

\section{Jackson and Bernstein inequalities in variation spaces \label{app.s1}}

In this section we present the proofs of results formulated in Section \ref{ssq.1}. 
As we have already mentioned, the results from section \ref{ssq.1} are generalizations
 of the corresponding results from \cite{hky.2000}. Their proofs are analogous to the 
 proofs of their counterparts in \cite{hky.2000}. However, we have decided to present 
 the details of these proofs for the sake of the completeness.

\subsection{Proof of Proposition \ref{th.kfun.c}. \label{app.0.ss1}}

The proof of the  Jackson inequality in Proposition \ref{th.kfun.c} 
(i) relies on Theorem 6.1 of  \cite{cdpx.1999}. The original statement in  \cite{cdpx.1999} 
deals with families of dyadic squares in $[0,1]^2$.
However, one can check that the same proof works in the setting as in 
Section \ref{sec:loc}, with the constant $2\nu$ for 
$\nu$-ary filtrations $(\mathscr F_n)$.
Let us recall the statement of Theorem 6.1 of  \cite{cdpx.1999} in this form:

\begin{theo}[{\cite[Theorem 6.1]{cdpx.1999}}]
\label{app.0.theo.cdpx}
Let $\phi: \cf \to [0,\infty)$ be a super-additive set function, i.e. 
for  $K_1, K_2 \in \cf $ with $K_1 \cap K_2 = \emptyset$, there is
$$
\phi(K_1) + \phi(K_2) \leq \phi(K_1 \cup K_2).
$$
Moreover, assume that 
$$
\lim_{\mu \to \infty} \sup_{K \in \caa_\mu} \phi(K) =0.
$$
Fix $\varepsilon > 0$ such that $\phi(\Omega) > \varepsilon$.

Then there exist 
two families $\cpp_\varepsilon$ and $\tilde\cpp_\varepsilon$, each consisting  of pairwise disjoint atoms or rings, such that
\begin{itemize}
\item[(a)] $\cpp_\varepsilon$ is a partition of $\Omega$, consisting of pairwise disjoint atoms and 
rings, and $\phi(K) \leq \varepsilon$ for each $K \in \cpp_\varepsilon$,
\item[(b)] $\tilde\cpp_\varepsilon$ consists of pairwise disjoint atoms and rings,
and $\phi(K) > \varepsilon$ for each $K \in \tilde\cpp_\varepsilon$,
\item[(c)] $\card \cpp_\varepsilon \leq 2\nu \cdot \card \tilde \cpp_\varepsilon $.
\end{itemize}
\end{theo}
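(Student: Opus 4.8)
The plan is to reproduce the stopping-time construction of \cite[Theorem~6.1]{cdpx.1999}, adapting the combinatorial bookkeeping from dyadic squares to the $\nu$-ary tree of atoms $\caa$. Throughout, call an atom $A\in\caa$ \emph{large} if $\phi(A)>\varepsilon$ and \emph{small} if $\phi(A)\le\varepsilon$. Two structural facts drive everything. First, super-additivity yields monotonicity along the tree: for atoms $B\subset A$ the disjoint splitting $A=B\cup(A\setminus B)$ gives $\phi(B)\le\phi(A)$, and for a chain $X_0\supset X_1\supset\cdots$ the nested rings $X_0\setminus X_j$ have $\phi(X_0\setminus X_j)$ nondecreasing in $j$. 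Second, the decay hypothesis $\sup_{K\in\caa_\mu}\phi(K)\to0$ forces every branch to become small from some level on, so the large atoms form a subtree of finite depth along each branch; in particular, below any large atom there exist \emph{minimal} large atoms, i.e.\ large atoms all of whose children are small.

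I would then build the partition $\cpp_\varepsilon$ by a top-down sweep, maintaining a queue of large atoms initialized with $\Omega$. Processing a large atom $A$, I descend along the tree following at each node the \emph{unique} child that is again large, sweeping the small sibling atoms into a growing ring $A\setminus X_j$; this is legitimate exactly until I reach a \emph{stopping node} $X_m$, which is either a minimal large atom (no large child) or a branch node (at least two large children). The ring cannot be extended indefinitely, since by super-additivity its $\phi$-value eventually exceeds $\varepsilon$; so whenever adding the next atom would make the ring large, I close off the last small ring and restart a fresh ring from that level. At a stopping node I emit the (at most $\nu$) small children as small atoms, recurse on each large child (enqueueing it as the start of a new chain), and close the current ring. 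Every emitted piece is a small atom or a small ring of the admissible form $A\setminus B$ with $A,B\in\caa$ (only a single atom is ever removed along one chain), and the pieces partition $\Omega$, which gives (a).

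For $\tilde\cpp_\varepsilon$ I would take the minimal large atoms together with one large ring per pair of consecutive threshold-closed rings. The minimal large atoms are large and, by the monotonicity above, pairwise disjoint: if $B$ is minimal large and $B'\subsetneq B$ were large, then $B'$ would lie in a small child of $B$, forcing $\phi(B')\le\varepsilon$. On each maximal single-large-child chain, the rings $R_1,\dots,R_r$ produced in the previous step are consecutive, and each $R_i$ with $i<r$ was closed because extending it by one more atom exceeds $\varepsilon$; hence each merged pair $R_1\cup R_2,\;R_3\cup R_4,\dots$ contains such an over-threshold extension and is therefore a large ring, and these merged rings are pairwise disjoint and disjoint across chains. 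Adding them to $\tilde\cpp_\varepsilon$ gives a family of pairwise disjoint large atoms and rings, establishing (b), and also the bound $r\le 2\,(\#\text{merged rings})+1$ that will feed the final count.

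The remaining and genuinely delicate task is the counting estimate (c), and I expect this to be the main obstacle. The idea is to charge each element of $\cpp_\varepsilon$ to $\tilde\cpp_\varepsilon$ through the tree geometry: the small-child atoms are charged, at most $\nu$ at a time, to the stopping nodes, whose number is controlled by the number of minimal large atoms via the standard leaf-counting bound for trees (a branch node has at least two large children, so the branch nodes of the tree of large atoms are fewer than its leaves, which are exactly the minimal large atoms); the threshold-closed rings are charged to the merged large rings with multiplicity at most $2$, by the pairing above. Combining the two sources of multiplicity — a factor $\nu$ from the up-to-$\nu$ children at each node and a factor $2$ from the pairing of consecutive rings — yields a bound of the form $\card\cpp_\varepsilon\lesssim\nu\,\card\tilde\cpp_\varepsilon$. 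The only nontrivial point is organizing this accounting so that the constant comes out as exactly $2\nu$ and so that all chosen members of $\tilde\cpp_\varepsilon$ are simultaneously pairwise disjoint; this is precisely the $\nu$-ary transcription of the argument in \cite{cdpx.1999}, and everything else is a routine adaptation of that proof.
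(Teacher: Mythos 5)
First, a point of comparison: the paper does not actually prove this statement --- it is imported from \cite[Theorem~6.1]{cdpx.1999} with the remark that the dyadic-square proof transfers verbatim to $\nu$-ary filtrations with constant $2\nu$, so the relevant benchmark is the stopping-time construction of that paper, and your outline is indeed the correct adaptation of it. The two structural facts you isolate (monotonicity of $\phi$ under inclusion and along nested rings, and finiteness of the tree of large atoms via the decay hypothesis plus K\"onig's lemma) are exactly what makes it run. There is, however, one concrete case in which the procedure as you describe it fails. Along a chain $X_0\supset X_1\supset\cdots$ of atoms each having a unique large child, a single layer $L_j=X_j\setminus X_{j+1}$ is a union of at most $\nu-1$ \emph{small} atoms, but super-additivity gives no \emph{upper} bound for $\phi(L_j)$ in terms of its pieces, so it can happen that $\phi(L_j)>\varepsilon$ already. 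In that situation your rule ``close off the last small ring and restart a fresh ring from that level'' either closes an empty ring or restarts with a set that is already large; followed literally, it would place a large ring into $\cpp_\varepsilon$, violating (a). The repair must be built into both the construction and the count: when $\phi(L_j)>\varepsilon$, output the at most $\nu-1$ small sibling atoms composing $L_j$ individually into $\cpp_\varepsilon$, put the large ring $L_j$ itself into $\tilde\cpp_\varepsilon$, and restart the current ring from $X_{j+1}$ with the empty set.

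Second, the counting in (c), which you explicitly defer as ``the main obstacle'', is the actual content of the theorem, and it does close with the constant $2\nu$ once the above repair is in place. With $\ell$ the number of minimal large atoms, the number of branch nodes is at most $\ell-1$ and the number of maximal single-large-child chains at most $2\ell-1$; on each chain, consecutive threshold-closed small rings pair into large merged rings (a run of $t$ rings gives $\lfloor t/2\rfloor$ merged rings and $t\le 2\lfloor t/2\rfloor+1$), singular layers are charged one-to-one with multiplicity $\nu-1$, and each stopping node contributes at most $\nu$ small atoms. Summing, with $M$ the number of merged large rings and $s$ the number of singular layers, $\card\cpp_\varepsilon\le 2M+\nu s+(2\ell-1)+\nu\ell+(\nu-2)(\ell-1)=2M+\nu s+2\nu\ell-\nu+1\le 2\nu(M+s+\ell)=2\nu\,\card\tilde\cpp_\varepsilon$, and pairwise disjointness of all members of $\tilde\cpp_\varepsilon$ follows from your observation that no large atom meets $X_0\setminus X_m$ on a chain. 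So the approach is the right one, but as written it has the one genuine failure mode above and leaves the decisive estimate unproved.
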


Now, Jackson inequality in Proposition \ref{th.kfun.c} (i) is proved 
 similarly as 
Theorems 6.2 and 6.3 in  \cite{cdpx.1999}, or Jackson inequality
in Theorem  6 (i) in  \cite{hky.2000}.

\begin{proof}[Proof  Proposition \ref{th.kfun.c} (i)]
Fix  $f\in V_{\sigma,p}(S)$. Without loss of generality, we can assume that $f \not \in S$, so $| f |_{V_{\sigma,p} }> 0$.  
Fix a positive integer $m$ and take $\varepsilon = \frac{1}{m^{p/\sigma}} | f |_{V_{\sigma,p}}^p$.  Consider the set function 
 \begin{equation}
 \phi(K) = E_p(f,K)^p.
 \end{equation}
 This function satisfies the assumptions of   Theorem \ref{app.0.theo.cdpx} by Proposition~\ref{prop.Wmu.f.est}.
Let $\cpp_\varepsilon$ and $\tilde\cpp_\varepsilon$ be the implied families of pairwise disjoint atoms and rings.
With the choice of $\varepsilon = \frac{1}{m^{p/\sigma}} | f |_{V_{\sigma,p}}^p$, we use  Theorem \ref{app.0.theo.cdpx} (b) to get
$\card \tilde\cpp_\varepsilon \leq m$, by the following argument:
$$\frac{\card \tilde\cpp_\varepsilon}{m} | f |_{V_{\sigma,p}}^\sigma
= \card \tilde\cpp_\varepsilon\cdot \varepsilon^{\sigma/p}
< \sum_{K \in \tilde\cpp_\varepsilon } E_p(f,K)^\sigma 
\leq | f |_{V_{\sigma,p}}^\sigma.
$$

Denote $\xi(m) = \card \cpp_\varepsilon$. Note that by Theorem \ref{app.0.theo.cdpx} (c)
we have $\xi(m) \leq 2\nu m$.  We combine this observation 
with Theorem \ref{app.0.theo.cdpx} (a)  to get
$$ 
\sigma_{\xi (m)} (f, \cc)^p
 \lesssim  \sum_{K\in \cpp_\varepsilon} E_p(f,K)^p \leq 
2\nu \frac{1}{m^{p \beta}} | f |_{V_{\sigma,p}}^p.
$$
This is enough to get Jackson inequaliy in  Proposition \ref{th.kfun.c} (i).
\end{proof}

Using also Lemma \ref{lem.bernst}, we are ready to give a proof of the Bernstein inequality in Proposition \ref{th.kfun.c} (ii).
The proof is similar to the proof of the Bernstein inequality  in Theorem 6 (ii) in \cite{hky.2000}

\begin{proof}[Proof of  Proposition \ref{th.kfun.c} (ii).]
 Let $g\in\Sigma_n^\cc$ and  $\Pi$ be a family  of pairwise disjoint atoms or rings. 
 We select the subfamily
\[
\Pi(g) = \{ R\in\Pi : E_p(g,R)>0 \}.
\]
Because $g\in\Sigma_n^\cc$, it follows by   Lemma \ref{lem.bernst}  that $\card\Pi(g) \leq n$. Therefore, by H\"older's inequality
\begin{align*}
\Big( \sum_{R\in\Pi} E_p(g,R)^\sigma \Big)^{1/\sigma} & =   \Big( \sum_{R\in\Pi(g)} E_p(g,R)^\sigma \Big)^{1/\sigma}
 \leq  n^\beta 
 \Big( \sum_{R\in\Pi(g)} E_p(g,R)^p \Big)^{1/p}  
\leq n^\beta \| g \|_p.
\end{align*}
Taking supremum over all families of disjoint atoms and rings $\Pi$, we get
\[
|g|_{V_{\sigma,p}} \leq  n^\beta  \| g \|_p.
\]
This is completes the proof of the Bernstein inequaliy in  Proposition \ref{th.kfun.c} (ii).
\end{proof}

For the sake of completeness, we present the proof of Lemma \ref{lem.bernst}.

\begin{proof}[Proof of Lemma \ref{lem.bernst}.] Assume that there exists  an atom $Q\in{\mathcal P}$ such that $E_p(f,Q)\neq 0$. Let us consider different positions of $I$ relative to $Q$:
\begin{itemize}
\item $I\cap Q=\emptyset$; this is not possible because then $f=0$ on $Q$;
\item $Q\subseteq I$; this is also not possible, because $f\cdot \charfun_Q \in S_Q$ and then $E_p(f,Q)=0$;
\item so the only possibility is $I\subsetneq Q$; but then for any $R\in{\mathcal P}$, $R\neq Q$
\[
R\cap Q=\emptyset \Rightarrow I\cap R = \emptyset.
\]
Thus, for all $R\in{\mathcal P}$, $R\neq Q$ we get $E_p(f,R) = 0$.
\end{itemize}

Assume now that $R=A\setminus B\in{\mathcal P}$, $\emptyset \neq B\subsetneq A$ are atoms,
 $R$ is not an atom and  $E_p(f,R)\neq 0$.  Let us now consider different positions of $I$ relative to $A$ and $B$.
\begin{itemize}
\item $A\cap I = \emptyset$: this is not possible because $f=0$ on $A\supset R$.
\item $A\subseteq I\Rightarrow R\subset I$ meaning that $f\charfun_R\in S_R$ what implies $E_p(f,R)=0$;
\item $I\subsetneq A$. Now we are going to review different positions of $I$ relative to $B$:
\begin{itemize}
\item $I\cap B=\emptyset$, which implies $I\subset R$. Because the sets belonging to ${\mathcal P}$ are pairwise disjoint there must be $I\cap P=\emptyset$ (and consequently $E_p(f,P)=0$) for $P\neq R$;
\item if $I\subset B$, then $I\cap R=\emptyset$ and $E_p(f,R)=0$;
\item what remains is the case of $B\subsetneq I \subsetneq A$. We are going to show
 that for any ring $P$ and atom $Q$ disjoint to $R$ there is $E_p(f,P)=0$ or 
 $E_p(f,Q)=0$.

First let $Q$ be an atom disjoint to  $R$.  Then we have a few cases:
\begin{itemize}
\item if $Q\cap A=\emptyset$ then surely $E_p(f,Q)=0$;
\item $A\subset Q$ is not possible because then $R\subset Q$;
\item $Q\subset A$ implies $Q\subset B$ (because otherwise $Q\cap R\neq \emptyset$). 
Then also $Q\subset I$ and $f\charfun_Q\in S_Q$ implying that $E_p(f,Q)=0$.
\end{itemize}
Now let $P=C\setminus D$ be a ring disjoint with $R=A\setminus B$, $P$ not an atom.  Let us review the position of $P$ relative to $R$
\begin{itemize}
\item $C\cap A=\emptyset$ implies that $f=0$ on $C\supset P$ and consequently $E_p(f,P)=0$;
\item $A\subseteq C$: in this case we need to review different positions of $A,B$ and $D$. 

 The case $A\cap D=\emptyset$ is not possible because it leads to $R\subset A\subset P$. 

In case $A \subseteq D$, there is   $I \cap P = \emptyset$ and $E_p(f,P) =0$.

It remains to consider the case $D\subsetneq A$.
\begin{enumerate}
\item If $D\cap B=\emptyset$  then either $A=B\cup D$, meaning that $R=D$, so $R$ is an atom, which we have excluded, or $A=B\cup D\cup Z$ for certain $\emptyset\neq Z\subset R$. But then also $Z\subset P$, meaning that $P\cap R\neq \emptyset$, which is not possible.
\item If $B\subset D$, then $A\setminus D\subset P$, $A\setminus D\subset R$ and $A\setminus D\neq\emptyset$ --  but this  is not possible, because of 
$R \cap P = \emptyset$.
\item $D\subset B$ implies that $R\subset P$ which is not possible,
because of 
$R \cap P = \emptyset$.
\end{enumerate}
\item $C\subsetneq A$. Because $R\cap P=\emptyset$ there must be $C\subset B$. 
In this case we have $P\subset C\subset B\subset I$, so $f\charfun_P\in S_P$ implying that
 $E_p(f,P)=0$.
\end{itemize}
\end{itemize}
\end{itemize}
This completes the proof of the lemma.
\end{proof}

\subsection{Proof of Theorem~\ref{theo.234}}
\label{sec:appendix_K_equiv}

Recall that 
the modulus of smoothness $\cw_S$ is given by  
\begin{equation*}
\cw_S(f,t)_{\sigma,p} = \sup_{\Pi} \min\Big(t,\frac{1}{\card\Pi}\Big)^\beta \left( \sum_{R\in\Pi} E_p(f,R)^\sigma\right)^{1/\sigma},
\end{equation*}
where $\beta = 1/\sigma - 1/p$ and the supremum is taken over all finite partitions $\Pi$ of $\Omega$ into disjoint atoms or rings.
In order to prove Theorem~\ref{theo.234},
we begin with some simple observation,  see Fact \ref{fact.kfun.a} below. Fact \ref{fact.kfun.a} (i)  corresponds to  \cite[(2.1)]{hky.2000},
while Fact \ref{fact.kfun.a} (ii) corresponds to the inequality \cite[(2.2)]{hky.2000}.
\begin{fact}\label{fact.kfun.a} (i) Let $f\in L^p$. Then
\begin{equation}
\cw_S(f,t)_{\sigma,p} \leq \Vert f\Vert_p.
\end{equation}
(ii)  There exists a constant $C$ such that for all 
$g\in V_{\sigma,p}$ and $0 < t \leq 1$, the following the inequality holds:
\begin{equation}
\cw_S(g,t)_{\sigma,p} \leq t^\beta |g|_{V_{\sigma,p}}.
\end{equation}
\end{fact}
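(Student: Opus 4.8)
The plan is to verify both inequalities directly from the definition of the modulus $\cw_S$, exploiting the two complementary ways in which the factor $\min(t,1/\card\Pi)$ can be estimated: bounding it by $1/\card\Pi$ for part (i), and by $t$ for part (ii).

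For part (i) I would fix an arbitrary finite partition $\Pi$ of $\Omega$ into disjoint atoms or rings, write $N = \card\Pi$, and use $\min(t,1/N) \leq 1/N$, so that the corresponding term in the supremum is at most $N^{-\beta}\big(\sum_{R\in\Pi} E_p(f,R)^\sigma\big)^{1/\sigma}$. Taking the competitor $g=0\in S_R$ gives $E_p(f,R)\leq \|f\|_{L^p(R)}$, and the core estimate is then H\"older's inequality with exponents $p/\sigma$ and its conjugate, applied to the sequence $(\|f\|_{L^p(R)}^\sigma)_{R\in\Pi}$ against the constant $1$:
\[
\sum_{R\in\Pi}\|f\|_{L^p(R)}^\sigma \leq \Big(\sum_{R\in\Pi}\|f\|_{L^p(R)}^p\Big)^{\sigma/p} N^{1-\sigma/p}.
\]
Since $\Pi$ partitions $\Omega$, we have $\sum_{R\in\Pi}\|f\|_{L^p(R)}^p = \|f\|_p^p$, so raising to the power $1/\sigma$ yields the bound $\|f\|_p\, N^{1/\sigma-1/p} = \|f\|_p\, N^{\beta}$. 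The factor $N^{\beta}$ cancels exactly against $N^{-\beta}$, and since the resulting estimate is independent of $\Pi$, taking the supremum concludes part (i).

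For part (ii) I would instead use $\min(t,1/\card\Pi)\leq t$, so that $\min(t,1/\card\Pi)^\beta\leq t^\beta$ for every admissible $\Pi$. Combined with the defining property of the seminorm, namely $\big(\sum_{R\in\Pi} E_p(g,R)^\sigma\big)^{1/\sigma}\leq |g|_{V_{\sigma,p}}$ for every finite family of disjoint atoms or rings, each term in the supremum defining $\cw_S(g,t)_{\sigma,p}$ is at most $t^\beta |g|_{V_{\sigma,p}}$; taking the supremum over $\Pi$ gives the claim with constant $C=1$. Both steps are routine, and the only point requiring care is the H\"older exponent bookkeeping in (i), where one must check that the exponent $N^{\beta}$ produced matches the factor $N^{-\beta}$ coming from $\min(t,1/N)^\beta$; there is no genuine obstacle.
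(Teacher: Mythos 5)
Your proposal is correct and follows essentially the same route as the paper: for (i) you bound $\min(t,1/\card\Pi)^\beta$ by $(\card\Pi)^{-\beta}$, estimate $E_p(f,R)\leq\|f\|_{L^p(R)}$ via the zero competitor, and apply H\"older with exponents $p/\sigma$ and its conjugate so that the factor $(\card\Pi)^{\beta}$ cancels; for (ii) you bound $\min(t,1/\card\Pi)^\beta$ by $t^\beta$ and invoke the definition of $|g|_{V_{\sigma,p}}$. This matches the paper's argument step for step, including the observation that the constant in (ii) can be taken to be $1$.
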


\begin{proof}
To prove part (i), let us fix a finite partition $\Pi$ of $\Omega$ into disjoint atoms or rings.
Then, by H\"older's inequality
\begin{eqnarray*}
\sum_{R\in\Pi} E_p(f,R)^\sigma & \leq & \sum_{R\in\Pi}\Vert f\Vert_{L^p(R)}^\sigma 
\\ & \leq &
\Big( \sum_{R\in\Pi} 1^{p/(p-\sigma)} \Big)^{(p-\sigma) / p}  \Big(\sum_{R\in\Pi}\Vert f\Vert_{L^p(R)}^{\sigma\cdot p/\sigma}\Big)^{\sigma / p} \leq 
(\card\Pi)^{1-\sigma / p} \Vert f\Vert_{L^p(\Omega)}^\sigma,
\end{eqnarray*}
and consequently
\[
\Big(\sum_{R\in\Pi} E_p(f,R)^\sigma \Big)^{1/\sigma} \leq (\card\Pi)^{\beta} \Vert f\Vert_{L^p(\Omega)}.
\]
It follows that
\[
\min\Big(t,\frac{1}{ \card\Pi}\Big)^\beta\Big(\sum_{R\in\Pi} E_p(f,R)^\sigma \Big)^{1/\sigma} \leq \frac{1}{ (\card\Pi)^{\beta}}\Big(\sum_{R\in\Pi} E_p(f,R)^\sigma \Big)^{1/\sigma} \leq \Vert f\Vert_{L^p(\Omega)}.
\]
Taking the supremum over all finite partitions $\Pi$ of $\Omega$
into disjoint atoms or rings  completes the proof of part (i).

Next, let us proceed with part (ii). For this,  take any finite partition $\Pi$ of $\Omega$
into disjoint atoms or rings. Then we have
\[
\min\Big(t,\frac{1}{\card\Pi}\Big)^\beta \Big(\sum_{R\in \Pi} E_p(g,R)^\sigma\Big)^{1/\sigma} \leq t^\beta  \Big(\sum_{R\in \Pi} E_p(g,R)^\sigma\Big)^{1/\sigma} \leq t^\beta|g|_{V_{\sigma,p}}.
\]
Taking supremum over all $\Pi$ completes the proof of part (ii). 
\end{proof}

The proof of Theorem \ref{theo.234} relies on the following Jackson and Bernstein inequalities.
The Jackson inequality formulated as   Fact \ref{fact.kfun.b} (i)
is a generalization of the Jackson inequality as in 
\cite[Theorem 6 (i)]{hky.2000}.
The Bernstein inequality formulated as   Fact \ref{fact.kfun.b} (ii)
is a generalization of the Bernstein inequality as in
\cite[Theorem 6 (ii)]{hky.2000}.
\begin{fact}  \label{fact.kfun.b} 
(i)  [Jackson inequality] There exists a constant $C$ such that for all $f \in L^p(S)$
\begin{equation*} 
\sigma_n(f,\cc) \leq C \cw_S(f,1/n)_{\sigma,p}.
\end{equation*}

(ii)  [Bernstein inequality]   Assume $g\in\Sigma_n^{\cc}$. Then
\begin{equation*}
|g|_{V_{\sigma,p}} \leq C n^\beta \cw_S(g,\frac{1}{n})_{\sigma,p}.
\end{equation*}
\end{fact}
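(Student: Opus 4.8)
The plan is to establish both inequalities following the scheme of \cite[Theorems~6.2, 6.3]{cdpx.1999} and \cite[Theorem~6]{hky.2000}, using as principal tools the decomposition Theorem~\ref{app.0.theo.cdpx}, the counting Lemma~\ref{lem.bernst}, and two elementary observations about the modulus. First, straight from Definition~\ref{def.108} one reads off that for every finite partition $\Pi$ of $\Omega$ into atoms or rings,
\begin{equation}\label{eq:plan_modbound}
  \Big( \sum_{R\in\Pi} E_p(f,R)^\sigma\Big)^{1/\sigma} \leq \max(n,\card\Pi)^\beta\, \cw_S(f,1/n)_{\sigma,p},
\end{equation}
since $\min(1/n,1/\card\Pi)^{-1}=\max(n,\card\Pi)$, and moreover $\cw_S(f,\lambda t)\leq\lambda^\beta\cw_S(f,t)$ for $\lambda\geq1$, because $\min(\lambda t,s)\leq\lambda\min(t,s)$. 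Second, I will use the elementary tree-combinatorial fact that any disjoint family $\crr$ of $k$ atoms or rings of the $\nu$-ary filtration can be completed to a partition $\Pi\supseteq\crr$ of $\Omega$ into atoms or rings with $\card\Pi\leq c_\nu k$, the complement of $\bigcup\crr$ being a union of boundedly (in $\nu$) many such pieces per element of $\crr$ by the structure of $(\mathscr F_n)$. The point of this completion step is that, unlike $|\cdot|_{V_{\sigma,p}}$, the modulus $\cw_S$ is a supremum over partitions of \emph{all} of $\Omega$, so the disjoint families produced by Theorem~\ref{app.0.theo.cdpx} and by Lemma~\ref{lem.bernst} must be turned into admissible competitors for $\cw_S$.

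For the Bernstein inequality (ii) I would argue as follows. Let $g\in\Sigma_n^{\cc}$ and let $\crr$ be a near-optimal disjoint family in the definition of $|g|_{V_{\sigma,p}}$; discarding the pieces with $E_p(g,R)=0$ and invoking Lemma~\ref{lem.bernst} exactly as in the proof of Proposition~\ref{th.kfun.c}(ii) shows that the number of remaining pieces is at most $n$. Completing $\crr$ to a partition $\Pi$ with $\card\Pi\leq c_\nu n$ and testing $\cw_S(g,1/n)$ against $\Pi$, using $\crr\subseteq\Pi$ and nonnegativity of the summands, gives
\[
  \cw_S(g,1/n)_{\sigma,p}\geq (c_\nu n)^{-\beta}\Big(\sum_{R\in\crr}E_p(g,R)^\sigma\Big)^{1/\sigma}\gtrsim n^{-\beta}|g|_{V_{\sigma,p}},
\]
which is precisely (ii).

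For the Jackson inequality (i) I would run the decomposition argument of Proposition~\ref{th.kfun.c}(i), but with the threshold tuned to the modulus rather than to $|f|_{V_{\sigma,p}}$. Concretely, apply Theorem~\ref{app.0.theo.cdpx} to $\phi(K)=E_p(f,K)^p$ with $\varepsilon=\cw_S(f,1/n)_{\sigma,p}^p/n$, producing a partition $\cpp_\varepsilon$ (pieces with $E_p^p\leq\varepsilon$) and a disjoint family $\tilde\cpp_\varepsilon$ (pieces with $E_p^p>\varepsilon$) satisfying $\card\cpp_\varepsilon\leq2\nu\card\tilde\cpp_\varepsilon$. The first step is to bound $\card\tilde\cpp_\varepsilon\lesssim n$: completing $\tilde\cpp_\varepsilon$ to a partition of size $\leq c_\nu\card\tilde\cpp_\varepsilon$, inserting \eqref{eq:plan_modbound}, and separating the cases $c_\nu\card\tilde\cpp_\varepsilon\leq n$ and $c_\nu\card\tilde\cpp_\varepsilon> n$ yields $\card\tilde\cpp_\varepsilon\lesssim n$, hence $\card\cpp_\varepsilon\lesssim n$. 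Approximating $f$ by $\sum_{K\in\cpp_\varepsilon}g_K\charfun_K$ with $g_K\in S$ nearly optimal places the approximant in $\Sigma_{2\card\cpp_\varepsilon}^{\cc}$ (each ring is a difference of two elements of $\cc$) and gives
\[
  \sigma_{2\card\cpp_\varepsilon}(f,\cc)^p\lesssim\sum_{K\in\cpp_\varepsilon}E_p(f,K)^p\leq\varepsilon^{(p-\sigma)/p}\sum_{K\in\cpp_\varepsilon}E_p(f,K)^\sigma\lesssim\varepsilon^{(p-\sigma)/p}n^{\beta\sigma}\cw_S(f,1/n)^\sigma,
\]
and the identity $\beta\sigma=(p-\sigma)/p$ together with the choice of $\varepsilon$ collapses the right-hand side to a constant times $\cw_S(f,1/n)^p$. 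This proves $\sigma_{Cn}(f,\cc)\lesssim\cw_S(f,1/n)$ for a fixed constant $C$; applying it with $n$ replaced by $\lfloor n/C\rfloor$ and using the scaling $\cw_S(f,\lambda t)\leq\lambda^\beta\cw_S(f,t)$ upgrades this to $\sigma_n(f,\cc)\lesssim\cw_S(f,1/n)$, the finitely many small $n$ being handled directly since, testing $\cw_S$ against $\Pi=\{\Omega\}$, one has $\sigma_1(f,\cc)\leq E_p(f,\Omega)\leq n^\beta\cw_S(f,1/n)_{\sigma,p}$.

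The main obstacle in both parts is the bookkeeping of cardinalities forced by the completion-to-a-partition step: the constant $c_\nu$, and with it the correct power of $n$, must be controlled there, and this is where the $\nu$-ary tree geometry enters. The second delicate point, specific to (i), is the simultaneous calibration of $\varepsilon$ small enough to control the approximation error yet large enough to keep $\card\cpp_\varepsilon\lesssim n$; the self-consistent choice $\varepsilon\asymp\cw_S(f,1/n)^p/n$ is exactly what makes both requirements hold at once, and verifying that it does is the technical heart of the argument.
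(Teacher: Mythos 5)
Your proposal is correct and follows essentially the same route as the paper: for (i) you apply Theorem~\ref{app.0.theo.cdpx} to $\phi(K)=E_p(f,K)^p$ with the same threshold $\varepsilon=\cw_S(f,1/n)_{\sigma,p}^p/n$ and obtain $\card\tilde\cpp_\varepsilon\lesssim n$ from the modulus by exactly the paper's computation (organized as a case split rather than a contradiction), and (ii) is the paper's argument via Lemma~\ref{lem.bernst} almost verbatim. The only deviation is that you make explicit the completion of a disjoint family of atoms or rings to a full partition of $\Omega$ with cardinality inflated by at most a factor depending on $\nu$ before testing it against $\cw_S$ --- a step the paper passes over silently, since Definition~\ref{def.108} takes the supremum over partitions while the families produced by Theorem~\ref{app.0.theo.cdpx}(b) and Lemma~\ref{lem.bernst} are merely disjoint collections --- and this extra care affects only the constants.
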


We proceed with the proof of Jackson inequality as stated in 
Fact \ref{fact.kfun.b} (i). It is analogous  to  the proof of the version of 
Jackson inequality  stated in 
Proposition \ref{th.kfun.c} (i). In particular, it also relies on 
Theorem~\ref{app.0.theo.cdpx}.

\begin{proof}[Proof of Fact \ref{fact.kfun.b} (i)]
Fix  $f\in L^p(S)$ and $\varepsilon = \frac{1}{n} \cw_S(f,\frac{1}{n})_{\sigma,p}^p$.  
Consider the set function 
 \begin{equation}
 \phi(K) = E_p(f,K)^p,
 \end{equation}
 which satisfies the assumptions of  \ref{app.0.theo.cdpx}
 by Proposition~\ref{prop.Wmu.f.est}.
Without loss of generality, assume that $E_p(f,\Omega)^p > \varepsilon$.
By Theorem~\ref{app.0.theo.cdpx}, we choose two finite families 
$\cpp_\varepsilon = \cpp_\varepsilon(f)$ and 
$\tilde \cpp _\varepsilon = \tilde \cpp _\varepsilon (f)$, both consisting 
of pairwise disjoint 
 atoms or rings, such that:
\begin{itemize}
\item[(a)] $\cpp_\varepsilon$ is a covering of $\Omega$,
and
 $
  E_p(f,R)^p \leq \varepsilon
 $
 for each $R\in \cpp_\varepsilon$.
\item[(b)]  
 $ \card\cpp_\varepsilon \leq 2\nu\card \tilde \cpp_\varepsilon$ and 
$
 E_p(f,R)^p > \varepsilon
 $
 for each $R\in \tilde \cpp_\varepsilon$. 

\end{itemize}
We shall check that  
\begin{equation}
\label{app.2.eq.45}
\card\cpp_\varepsilon \leq 2\nu \cdot n.
\end{equation}
Indeed, if \eqref{app.2.eq.45} is true, we have because of estimate (a)
 \begin{eqnarray*} 
\sigma_{2\nu \cdot n} (f, \cc)^p
& \lesssim & \sum_{R\in \cpp_\varepsilon} E_p(f,R)^p \leq \card\cpp_\varepsilon \cdot \varepsilon
\\ & \leq & 2\nu \cdot  \cw_S\big(f,\frac{1}{n}\big)_{\sigma,p}^p
\leq (2\nu)^{1+ \beta \cdot p}  \cw_S\big(f,\frac{1}{\xi \cdot n}\big)_{\sigma,p}^p.
 \end{eqnarray*}
This is enough to get Fact \ref{fact.kfun.b} (i).

It remains to check  \eqref{app.2.eq.45}. 
 For this, suppose on the contrary that $\card\cpp_\varepsilon > 2\nu n$.
  Denote $N_\varepsilon = \card\tilde \cpp_\varepsilon$. Because of (b), it follows that 
$
2\nu n < \cpp_\varepsilon \leq 2\nu N_\varepsilon,
$
and consequently  
\begin{equation}
\label{app.2.eq.89} n < N_\varepsilon.
\end{equation}
Now, using \eqref{app.2.eq.89} and  (b) we find
\[
N_\varepsilon \varepsilon^{\sigma / p} \leq \sum_{R\in \tilde \cpp_\varepsilon} E_p(f,R)^\sigma = 
 (N_\varepsilon)^{\sigma\beta}   \min\big(\frac{1}{n},\frac{1}{N_\varepsilon}\big)^{\sigma\beta}  \sum_{R\in \tilde \cpp_\varepsilon} E_p(f,R)^\sigma \leq  N_\varepsilon^{\sigma\beta}  \cw_S\big(f,\frac{1}{n}\big)_{\sigma,p}^\sigma,
\]
and therefore
\[
N_\varepsilon^{1/p} \varepsilon^{1/p} \leq    \cw_S\big(f,\frac{1}{n}\big)_{\sigma,p}.
\]
Using the choice $\varepsilon = \frac{1}{n} \cw_S(f,\frac{1}{n})_{\sigma,p}^p$ we get
\[
N_\varepsilon^{1/p}  \frac{1}{n^{1/p}} \cw_S\big(f,\frac{1}{n}\big)_{\sigma,p} \leq    \cw_S\big(f,\frac{1}{n}\big)_{\sigma,p},
\]
which implies
\[
N_\varepsilon \leq n.
\]
But this contradicts \eqref{app.2.eq.89}.  This contradiction shows that 
\eqref{app.2.eq.45} is true.
This completes the proof of item (i) of Fact \ref{fact.kfun.b}.
\end{proof}

Now, we proceed with the proof of Bernstein inequality as stated in 
Fact \ref{fact.kfun.b} (ii). It is analogous  to  the proof of the version of 
Bernstein inequality  stated in 
Proposition \ref{th.kfun.c} (ii). In particular, it also relies on Lemma \ref{lem.bernst}.

\begin{proof}[Proof of Fact \ref{fact.kfun.b} (ii).] The proof is similar to the proof of  \cite[(6.2)]{hky.2000}. 
    Let $g\in\Sigma_n^\cc$ and $\Pi$ an arbitrary finite family of pairwise disjoint atoms or rings. 
We set 
\[
\Pi(g) = \{ R\in\Pi : E_p(g,R)>0 \}.
\]
Because $g\in\Sigma_n^\cc$ we obtain from Lemma \ref{lem.bernst} that $\card\Pi(g) \leq n$. 
Therefore
\[
\Big( \sum_{R\in\Pi} E_p(g,R)^\sigma \Big)^{1/\sigma} = \Big( \sum_{R\in\Pi(g)} E_p(g,R)^\sigma \Big)^{1/\sigma} \leq n^\beta  \cw_S(g,\frac{1}{n})_{\sigma,p}.
\]
Taking supremum over all partitions $\Pi$ we get
\[
|g|_{V_{\sigma,p}} \leq  n^\beta  \cw_S(g,\frac{1}{n})_{\sigma,p},
\]
completing the proof.
\end{proof}

Now we are ready to give the proof of Theorem~\ref{theo.234}.
It follows the lines of the proof of  \cite[Theorem 5]{hky.2000}. 

\begin{proof}[Proof of Theorem \ref{theo.234}] 
First,  take $g\in V_{\sigma,p}$. Then,  by Fact \ref{fact.kfun.a} (i) and (ii), we have
\[
\cw_S(f,t)_{\sigma,p} \lesssim  \cw_S(f-g,t)_{\sigma,p} + \cw_S(g,t)_{\sigma,p} \leq   \Vert f-g\Vert_p + t^\beta |g|_{V_{\sigma,p}}.
\]
Taking the infimum over all $g\in V_{\sigma,p}$ proves the left inequality.

Let us proceed with the right inequality. Take $g\in\Sigma_n^\cc$. 
It follows by  Fact  \ref{fact.kfun.a}  (ii) and  \ref{fact.kfun.b}  (ii) that 
\[
\frac{1}{n^\beta} |g|_{V_{\sigma,p}} \simeq  \cw_S\big(g,\frac{1}{n}\big)_{\sigma,p}.
\]
So we have
\begin{align*}
\frac{1}{n^\beta} |g|_{V_{\sigma,p}}&\lesssim  \cw_S\big(g,\frac{1}{n}\big)_{\sigma,p} \lesssim 
 \cw_S\big(f,\frac{1}{n}\big)_{\sigma,p} + \cw_S\big(g-f,\frac{1}{n}\big)_{\sigma,p} 
\\
&\lesssim
\cw_S\big(f,\frac{1}{n}\big)_{\sigma,p} + \Vert g-f\Vert_p,
\end{align*}
where the last inequality follows by Fact \ref{fact.kfun.a} (i). Hence
\[
\Vert f-g\Vert_p + \frac{1}{n^\beta} |g|_{V_{\sigma,p}} \lesssim  \Vert f-g\Vert_p + \cw_S\big(f,\frac{1}{n}\big)_{\sigma,p},
\]
and therefore, since by the Bernstein inequality in Proposition~\ref{th.kfun.c} we have $\Sigma_n^\cc \subset V_{\sigma,p}$,
\[
K\big(f,\frac{1}{n^\beta},L_p(S), V_{\sigma,p}\big) \leq \Vert f-g\Vert_p + \frac{1}{n^\beta}|g|_{V_{\sigma,p}} 
\lesssim  \| f-g\|_p + \cw_S\big(f,\frac{1}{n}\big)_{\sigma,p}.
\]
Taking the infimum over all $g \in \Sigma_n^\cc$ yields
$$
K\big(f,\frac{1}{n^\beta},L_p(S), V_{\sigma,p}\big) \lesssim
 \sigma_n(f,\cc) + \cw_S\big(f,\frac{1}{n}\big)_{\sigma,p}.
$$
By Fact~\ref{fact.kfun.b} (i) we conclude
\[
K\big(f,\frac{1}{n^\beta},L_p(S), V_{\sigma,p}\big) \lesssim \cw_S\big(f,\frac{1}{n}\big)_{\sigma,p},
\]
which finishes the proof of the right inequality.
\end{proof}

\section{Another example for comparing variation spaces}
\label{sec:tensor_example}
Here we give another example that shows that variation space $V_{\sigma,p}$ 
corresponding to binary filtrations and $\nu$-ary filtrations don't necessarily 
coincide. 
Consider the probability space $\Omega = [0,1)^2$, equipped with the Borel $\sigma$-algebra and 
the two-dimensional Lebesgue measure and the choice $S = \operatorname{span}\{ \charfun_\Omega \}$.
Let $n_1,n_2 \geq 2$ be integers and let $\mathscr C_n$ be 
the collection of all rectangles of the form 
\[
  [(j_1-1) n_1^{-n}, j_1 n_1^{-n}  )   \times  [(j_2-1) n_2^{-n}, j_2 n_2^{-n}),\qquad j=(j_1,j_2)
\] 
for integers $j_1 = 1,\ldots, n_1^n$ and $j_2=1,\ldots,n_2^n$. 
Let $V_n := [1-n_1^{-n}, 1) \times [0,1)$ be the vertical strip at the right boundary 
of $\Omega$ of width $n_1^{-n}$.
Let the filtration $(\mathscr F_n)$ on $\Omega$
be such that each  $\mathscr F_n$ is generated by the rectangles 
in $\mathscr C_n$.
Denote by
$\mathscr C_{n,r} := \{ A\in \mathscr C_n : A\subset V_n \} $
the collection of all rightmost atoms of $\mathscr C_n$. Next, let $\mathscr C_{n,\ell}$ 
be the collection of leftmost atoms of $\mathscr C_n$ contained in the strip $V_{n-1}$.
For each atom $A\in \mathscr C_{n-1}$ contained in the strip $V_{n-1}$, choose two arbitrary 
atoms $B_1,B_2\in \mathscr C_{n,\ell}$ with $B_1,B_2\subset A$.
Enumerate all those resulting atoms as $(M_{n,m})$ for $m = 1,\ldots, 2n_2^{n-1}$ such 
that $M_{n,m}$ and $M_{n,m+1}$ have the same predecessor in $\mathscr C_{n-1}$ for all
odd $m$. 
Observe that we have $|M_{n,m}| = (n_1 n_2)^{-n}$ for all $m$.
For the most instructive special case $n_1 = n_2 = 2$, the first few sets $(M_{j,m})$ are depicted in 
Figure~\ref{fig:counter1}.

\begin{figure}
\begin{tikzpicture}[x=18cm,y=9cm]
    \foreach \n in {2,4,8}
    \foreach \m in {1,...,\n}
    { 
        \draw[draw=black] (1-1/\n, \m/\n-1/\n) node[anchor=south west]
        {$M_{\pgfmathparse{int(ceil(log2(\n)))}\pgfmathresult,\m}$} rectangle (1-1/\n/2,\m/\n);
        \ifthenelse{\n = 8}
            {\draw[draw=black](1-1/\n/2, \m/\n-1/\n) node[anchor=south west]{$\cdots$} rectangle (1,\m/\n)}
            {};
    }
\end{tikzpicture}
\caption{The unit square $[0,1)^2$ and the respective positions of the squares $M_{j,m}$ in 
the case $n_1 = n_2 = 2$.}
\label{fig:counter1}
\end{figure}


From the filtration $(\mathscr F_n)$, we generate a binary filtration with corresponding 
atoms $\mathscr A$ by introducing $\sigma$-algebras between $\mathscr F_{n-1}$ and $\mathscr F_n$.
In doing so we can, and will, make sure that $M_{n,m} \cup M_{n,m+1}$ is an atom in $\mathscr A$.

We now consider the variation spaces $V_{\sigma,p}^{\mathscr A}$ and $V_{\sigma,p}^{\mathscr C}$
(with $\mathscr C = \cup_n \mathscr C_n$)
corresponding to those two choices of atoms. The aim is to construct  a function for which 
those two norms are not comparable. Let us remark that since every atom/ring in $\mathscr C$
is an atom/ring in $\mathscr A$, it is clear that 
\[
    |f|_{V_{\sigma,p}^{\mathscr C}} \leq |f|_{V_{\sigma,p}^{\mathscr A}}
\]
for every function $f$.
Thus, we want to give an example of a function $f$ so that $|f|_{V_{\sigma,p}^{\mathscr A}}$ 
is much larger than $|f|_{V_{\sigma,p}^{\mathscr C}}$.
Using the sets $(M_{n,m})$ from above, we define the function
\[
    f_j := \alpha_j    \Big( \sum_{m \text{ odd}} \charfun_{M_{j,m}} - 
     \sum_{m \text{ even}} \charfun_{M_{j,m}} \Big)
\]
with $\alpha_j = n_1^{j/p} n_2^{j(1/p-1/\sigma)}$. 
Additionally set 
\[
    f := \sum_{j= 1}^K f_j.
\]


Let us give a lower estimate for $| f |_{V_{\sigma,p}^{\mathscr A}}$ by using the \emph{atoms} 
$ M_{j,m} \cup M_{j,m+1}$ for odd $m$.
To wit,
\begin{align*}
    |f|_{V_{\sigma,p}^{\mathscr A}}^\sigma \geq \sum_{j= 1}^K \sum_{m\text{ odd}} E_p(f_j, M_{j,m} \cup M_{j,m+1})^\sigma
     = \sum_{j= 1}^K \sum_{m\text{ odd}} \|f_j \charfun_{M_{j,m} \cup M_{j,m+1}}\|_p^\sigma,
\end{align*}
where we used that $f_j$ has mean zero over the sets $M_{j,m}\cup M_{j,m+1}$.
We evaluate this explicitly by the definition of $f_j$ as follows 
\begin{align*}
\sum_{j=1}^K \sum_{m\text{ odd}} \|f_j \charfun_{M_{j,m} \cup M_{j,m+1}}\|_p^\sigma
 &=   \sum_{j= 1}^K \sum_{m\text{ odd}} \alpha_j^\sigma (2|M_{j,m}|)^{\sigma/p}
 \simeq  \sum_{j= 1}^K \sum_{m\text{ odd}} \alpha_j^\sigma (n_1 n_2)^{-j\sigma/p} \\
 & \simeq  \sum_{j= 1}^K  n_2^j ( n_1^{\sigma j/p} n_2^{\sigma j(1/p - 1/\sigma)}) (n_1 n_2)^{-j\sigma/p} = 
 \sum_{j= 1}^K 1 = K.
\end{align*}
This yields the lower estimate 
\begin{equation}\label{eq:nary_lower}
|f|_{V_{\sigma,p}^{\mathscr A}} \geq K^{1/\sigma}.
\end{equation}

Next we estimate $|f|_{V_{\sigma,p}^\mathscr C}$ from above. Assume that $\Pi$ is a 
collection of pairwise 
disjoint atoms/rings in $\mathscr C$. This implies in particular that every $R\in \Pi$
is of the form $R = A\setminus B$ where $A\in\mathscr C$ and $B\in\mathscr C$ or 
the empty set. 

Let $A = I\times J\in\mathscr D$ with $\sup I = 1$. Then the function $f$ has zero mean over $A$.
Define the subset $\Pi'\subset \Pi$ to consist of all $R = A\setminus B \in \Pi$ so that 
if $A = I \times J$ we have $\sup I = 1$. Then, if $R\in \Pi\setminus \Pi'$ we have $\sup I < 1$, 
which gives by definition that $f$ is constant on $A$ and, a fortiori, $f$ is constant on $R$.
This implies $E_p(f,R)=0$ in this case and yields 
\[
\sum_{R\in\Pi} E_p(f,R)^\sigma = \sum_{R\in \Pi'} E_p(f,R)^\sigma.
\]
Let $R=A\setminus B \in \Pi'$. For $A = I\times J$ we have $\sup I=1$ and thus 
there exists at least one maximal interval $L_R$ of the form $[(j-1)n_2^{-n}, j n_2^{-n})$ satisfying
%
 \[
[\inf I, 1] \times L_R \subseteq R.    
 \]
 If $B=\emptyset$ we have that $|L_R| = |J|$ and if $B\neq \emptyset$ we have 
 $|L_R| = |J|/n_2$.
Due to this property and the disjointness of the sets in $\Pi$, the collection $\{ L_R : R\in \Pi' \}$ consists
of mutually disjoint dyadic intervals.

We now calculate $E_p(f, R)$ for $R = A\setminus B \in \Pi'$ with $A\in\mathscr C_j$:
\begin{align*}
    E_p(f,A\setminus B)^p &\leq E_p(f,A)^p = \| f\|_{L^p(A)}^p \leq \sum_{m = j}^K \alpha_m^p n_2^{m-j} (n_1n_2)^{-m} \\
    &=\sum_{m = j}^K n_1^m n_2^{m(1-p/\sigma))} n_2^{m-j} (n_1n_2)^{-m}  \\
    &=  n_2^{-j} \sum_{m=j}^K n_2^{m(1-p/\sigma)}
    \simeq 
     n_2^{-j} n_2^{j(1-p/\sigma)} = n_2^{-jp/\sigma},
\end{align*}
since $p/\sigma > 1$. This implies 
\begin{equation}\label{eq:nary_upper}
E_p(f,R)^\sigma \lesssim |L_R|,\qquad R\in \Pi',
\end{equation}
where we used the fact that $|L_R|$ and the height of  $A$ are comparable.
 Therefore we see that 
 \[
    \sum_{R\in\Pi} E_p(f,R)^\sigma = 
    \sum_{R\in\Pi'} E_p(f,R)^\sigma
    \lesssim \sum_{R\in\Pi'} |L_R|\leq 1,
 \]
 by the disjointness of the intervals $\{ L_R : R\in\Pi'\}$.
 Since this calculation was done for arbitrary partitions $\Pi$,
this yields
\[
|f|_{V_{\sigma,p}^{\mathscr C}} \lesssim 1.
\]
Together with \eqref{eq:nary_lower}, this means that the two norms 
$|\cdot|_{V_{\sigma,p}^{\mathscr C}}$ and   $|\cdot|_{V_{\sigma,p}^{\mathscr A}}$
are not comparable.

\section{Non-separability of variation spaces}\label{app:sep}
This section contains a proof of Theorem~\ref{thm:not_sep}.
We consider an arbitrary probability space $(\Omega,\mathscr F)$ and a
finite dimensional space $S\subseteq L^\infty(\Omega)$ as usual and
a binary filtration $(\mathscr F_n)_{n=0}^\infty$ and corresponding atoms $\mathscr A$ that satisfy
\begin{equation}\label{eq:density}
  \text{each chain of atoms $(X_i)_{i=0}^L$ of length $L+1$ satisfies $|X_L|\leq |X_0|/2$.} 
\end{equation}
for some uniform constant $L$.
Recall the notation $A'$ for the smaller direct successor
of $A$ corresponding to the filtration $(\mathscr F_n)$ and 
$A''$ for the larger direct successor of $A$.
Every $A\in\mathscr A$ admits those because of 
the density assumption $\lim_{n\to\infty} \max_{A\in\mathscr A_n} |A| =0$.
For each atom $A\in\mathscr A$, we choose one function $\varphi_A$ that is part
of a local orthonormal system $\Phi=(\varphi_j)$ with $\operatorname{supp} \varphi_A = A$ 
corresponding to $(\mathscr F_n)$ and $S$ 
and $\varphi_A$ is orthogonal to the functions in $S$ restricted to $A$. 

\begin{lem}\label{lem:chain}
    Let $(X_i)_{i=0}^K$ for $K\leq L$ be a chain of atoms with $|X_K| \leq |X_0|/2$. 
    Then, there exists an index $i\in \{1,\ldots,K\}$ such that 
    \[
        |X_i| \leq c_K |X_{i-1}| \leq c_L |X_{i-1}|
    \]
    with $c_K = 2^{-1/K}$.
\end{lem}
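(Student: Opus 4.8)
The plan is to use a telescoping product together with a pigeonhole argument, both of which are entirely elementary. First I would observe that the consecutive ratios telescope along the chain:
\[
\prod_{i=1}^K \frac{|X_i|}{|X_{i-1}|} = \frac{|X_K|}{|X_0|} \leq \frac{1}{2},
\]
where the final inequality is exactly the hypothesis $|X_K| \leq |X_0|/2$. This reduces the whole statement to a question about a product of $K$ positive factors whose total is at most $1/2$.

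Next I would argue by contradiction. Suppose no admissible index exists, i.e.\ $|X_i| > c_K |X_{i-1}|$ for every $i \in \{1,\ldots,K\}$, where $c_K = 2^{-1/K}$. Multiplying these $K$ strict inequalities together yields
\[
\frac{|X_K|}{|X_0|} = \prod_{i=1}^K \frac{|X_i|}{|X_{i-1}|} > c_K^K = 2^{-1} = \frac{1}{2},
\]
which contradicts the telescoped bound obtained above. Hence there must be at least one index $i\in\{1,\ldots,K\}$ with $|X_i| \leq c_K |X_{i-1}|$, establishing the first inequality in the claim.

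Finally, the second inequality $c_K |X_{i-1}| \leq c_L |X_{i-1}|$ follows from the monotonicity of $c_K = 2^{-1/K}$ in $K$: since $K \leq L$ we have $-1/K \leq -1/L$, and as $t \mapsto 2^t$ is increasing this gives $c_K \leq c_L$, so multiplying by the positive quantity $|X_{i-1}|$ preserves the inequality. There is no real obstacle in this proof; the only point requiring a little care is to keep the inequalities \emph{strict} in the contradiction step, so that the product strictly exceeds $1/2$ and thereby conflicts with the non-strict hypothesis $|X_K|/|X_0| \leq 1/2$.
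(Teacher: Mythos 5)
Your proof is correct and is essentially identical to the paper's own argument: both assume $|X_i| > c_K|X_{i-1}|$ for all $i$, telescope the product to get $|X_K| > c_K^K|X_0| = |X_0|/2$, and contradict the hypothesis $|X_K|\leq |X_0|/2$. The additional remark that $c_K\leq c_L$ follows from $K\leq L$ is the same routine monotonicity observation the paper leaves implicit.
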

\begin{proof}
Indeed, if $|X_{i}| > c_K |X_{i-1}|$ for each $i=1,\ldots, K$, we get 
\[
|X_K| > c_K^K |X_0| \geq 2 c_K^K |X_K|,    
\]
by assumption, which is a contradiciton. 
\end{proof}

\begin{lem}\label{lem:D} 
    If \eqref{eq:density} is satisfied and
     $A\in\mathscr A$ is an arbitrary atom, there exists a 
    subatom $D=D(A)\subset A$ satisfying 
    \[
        |D''| \leq c_L |D| \qquad \text{and}\qquad |D| \geq |A|/2.
    \]
    In particular, we also have $|D'| \geq (1-c_L) |D|$.
\end{lem}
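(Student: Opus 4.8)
The plan is to construct $D$ by descending from $A$ along the chain of atoms obtained by repeatedly passing to the \emph{larger} successor, truncating as soon as the measure has dropped to half. Concretely, I would set $X_0 = A$ and $X_i = X_{i-1}''$ for $i \geq 1$; taking the larger successor keeps the atoms in the chain as large as possible, which is what will let us control $|D|$ from below. First I would invoke the density hypothesis~\eqref{eq:density}: the chain $(X_i)_{i=0}^L$ of length $L+1$ satisfies $|X_L| \leq |A|/2$, so that $m := \min\{ i : |X_i| \leq |A|/2\}$ is a well-defined integer with $1 \leq m \leq L$ (it is at least $1$ because $|X_0| = |A|$).

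Next I would apply Lemma~\ref{lem:chain} to the truncated chain $(X_i)_{i=0}^m$. This is legitimate because $m \leq L$ and $|X_m| \leq |X_0|/2$ by the choice of $m$. The lemma produces an index $i \in \{1,\ldots,m\}$ with $|X_i| \leq c_L |X_{i-1}|$. I would then set $D := X_{i-1}$; this atom genuinely splits, since $X_i = X_{i-1}''$ exists, and by construction $D'' = X_i$. Hence $|D''| = |X_i| \leq c_L |X_{i-1}| = c_L |D|$, which is the first required inequality.

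For the size estimate, the key point is that $i-1 \leq m-1 < m$, so by the minimality in the definition of $m$ one has $|D| = |X_{i-1}| > |A|/2$ when $i \geq 2$, while $|D| = |X_0| = |A| \geq |A|/2$ in the remaining case $i = 1$. In either case $|D| \geq |A|/2$. The ``in particular'' assertion is then immediate from $|D'| + |D''| = |D|$, which gives $|D'| = |D| - |D''| \geq (1 - c_L)|D|$.

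The one point I would watch carefully is the interplay between the two conclusions. Applying Lemma~\ref{lem:chain} to the full chain $(X_i)_{i=0}^L$ would supply the ratio bound, but the resulting index could lie deep in the chain, where $|D|$ is only controlled by $|A|/2^{L-1}$, too weak for the claim. Truncating at the first index $m$ where the measure reaches $|A|/2$ is precisely what forces the predecessor $X_{i-1}$ of the index returned by the lemma to still have measure at least $|A|/2$, reconciling the ratio bound with the size requirement.
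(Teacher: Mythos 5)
Your proof is correct and follows exactly the paper's argument: descend along the chain of larger successors $X_i = X_{i-1}''$, truncate at the minimal index where the measure has dropped to half, apply Lemma~\ref{lem:chain} to that truncated chain, and take $D = X_{i-1}$. You merely spell out the minimality argument for $|D|\geq |A|/2$ that the paper leaves implicit.
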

\begin{proof}
Consider the chain $(X_i)_{i=0}^L$,
given by $X_0 = A$ and $X_i = X_{i-1}''$ for $i=1,\ldots,L$.
Let $K\leq L$ be the minimal index such that $|X_K| \leq |X_0|/2$. 
By Lemma~\ref{lem:chain}, we take an index $i\in \{1,\ldots,K\}$ satisfying 
$|X_i| \leq c_L |X_{i-1}|$. Taking $D=D(A) = X_{i-1}$
concludes the proof.
\end{proof}

Let $0< p<\infty$, $0<\sigma<p$ and $\beta = 1/\sigma - 1/p>0$.
Let $\alpha = \min(1,p)$.
We define a subsequence $(\mathscr G_j)_{j=0}^\infty$ of the filtration $(\mathscr F_n)$,
and a sequence of functions $(f_j)$, 
by the following procedure.
Let $\mathscr G_0 = \mathscr F_0 = \{\emptyset, \Omega\}$ and take 
$D = D(\Omega)$ according to Lemma~\ref{lem:D}. Set $f_0 = \psi_D$,
which is contained in $S(\mathscr F_{n_0})$
for some positive integer $n_0$.
Let $\mathscr G_{j-1}$ and $f_{j-1}\in S(\mathscr F_{n_{j-1}})$ be given. 
Let $\mathscr G_{j} = \mathscr F_{n_{j-1}}$ and for each atom $C$ 
of $\mathscr G_j$, choose $D(C)\subset C$ according to Lemma~\ref{lem:D}.
Let $\mathscr A_j^{\mathscr G}$ be the set of atoms of $\mathscr G_j$ 
and define 
\[
f_j = \sum_{C\in \mathscr A_j^{\mathscr G}} |C|^{1/2 + \beta} \varphi_{D(C)},
\]
which is contained in $S(\mathscr F_{n_j})$ for some positive integer $n_j > n_{j-1}$.

Having defined $f_j$ for each integer $j\geq 0$, we set for $\Lambda\subset \mathbb N$
\begin{equation}\label{eq:def_f}
    f_\Lambda := \sum_{j\in\Lambda} f_{\nu\cdot j} =    \sum_{j\in\Lambda}  \sum_{C\in\mathscr A_{\nu\cdot j}^{\mathscr G}} |C|^{1/2+\beta} \varphi_{D(C)},
\end{equation}
for some fixed $\nu\in\mathbb N$ for which we will give a condition later.
Let $\mathscr A^{\mathscr G} = \cup_j \mathscr A_j^{\mathscr G}$.
We summarize that for all $C\in \mathscr A^{\mathscr G}$, we have 
by Lemma~\ref{lem:D} the estimates
\begin{equation}\label{eq:summary}
    |D(C)'| \geq (1-c_L) |D(C)| \geq (1-c_L) |C|/2.
\end{equation}

\begin{lem}\label{lem:p_orth}
    Let $C \in \mathscr A^{\mathscr G}$. Let $h$ be a function of the 
    form $h = h_1 \charfun_{D(C)'} + h_2\charfun_{D(C)''}$ with $h_1,h_2\in S$.
    Then, for each $0<p<\infty$ we have
    \[
        \| h \|_p \simeq |D(C)|^{1/p - 1/2} \|h\|_2\simeq |C|^{1/p - 1/2} \|h\|_2,    
    \]
    where the implicit constants depends only on the constants $c_1,c_2$ from 
    \eqref{eq:L1Linfty}, on $c_L$ and on $p$.
\end{lem}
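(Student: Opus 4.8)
The plan is to reduce this two-atom statement to a single-atom comparison between the $L^p$ norm and the $L^\infty$ norm of functions in $S$, which is precisely where the stability condition \eqref{eq:L1Linfty} enters. First I would record the geometric estimates used throughout. Writing $D = D(C)$, Lemma~\ref{lem:D} gives $|D''|\le c_L|D|$ and $|D|\ge |C|/2$, and since the larger successor always satisfies $|D''|\ge |D|/2$ we must have $c_L\ge 1/2$; consequently
\[
(1-c_L)|D|\le |D'|\le |D|/2\le |D''|\le c_L|D| .
\]
Thus $|D'|$, $|D''|$, $|D|$ and $|C|$ are all mutually comparable with constants depending only on $c_L$, and in particular $|D|^{1/p-1/2}\simeq |C|^{1/p-1/2}$. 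Hence the two asserted equivalences coincide up to such constants, and it suffices to prove the one involving $|D|$.

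Next I would establish the key single-atom fact: for every atom $A\in\mathscr A$, every $f\in S$ and every $0<p<\infty$,
\[
c_1 c_2^{1/p}\,|A|^{1/p}\|f\|_A\le \|f\|_{L^p(A)}\le |A|^{1/p}\|f\|_A .
\]
The upper bound is immediate from $|f|\le \|f\|_A$ on $A$. For the lower bound I would invoke \eqref{eq:L1Linfty}: the set $E=\{\omega\in A:|f(\omega)|\ge c_1\|f\|_A\}$ satisfies $|E|\ge c_2|A|$, whence $\int_A|f|^p\ge \int_E|f|^p\ge (c_1\|f\|_A)^p|E|\ge c_1^pc_2|A|\,\|f\|_A^p$. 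Comparing this two-sided estimate for the exponent $p$ with the same estimate for the exponent $2$ then gives
\[
\|f\|_{L^p(A)}\simeq |A|^{1/p-1/2}\,\|f\|_{L^2(A)},
\]
with constants depending only on $c_1,c_2,p$; the case $\|f\|_A=0$ is trivial since both sides vanish.

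Finally I would apply this on the two atoms $D'$ and $D''$. Since $h=h_1$ on $D'$ and $h=h_2$ on $D''$ with $h_1,h_2\in S$, the previous step yields $\|h\|_{L^p(D')}\simeq |D|^{1/p-1/2}\|h\|_{L^2(D')}$ and likewise on $D''$, after replacing $|D'|,|D''|$ by $|D|$ via the first paragraph. As $h$ is supported on $D=D'\sqcup D''$, we have $\|h\|_p^p=\|h\|_{L^p(D')}^p+\|h\|_{L^p(D'')}^p$ and $\|h\|_2^2=\|h\|_{L^2(D')}^2+\|h\|_{L^2(D'')}^2$; combining the two atomic estimates and then replacing $|D|$ by $|C|$ completes the proof. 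There is no genuine obstacle here: the one point requiring attention is that the final combination passes between an $\ell^p$ and an $\ell^2$ sum of two nonnegative numbers, which are comparable with a $p$-dependent constant, and that the exponent $1/p-1/2$ (negative when $p>2$) stays consistent when moving between $|D'|,|D''|,|D|,|C|$ — both of which are controlled entirely by the $c_L$-comparabilities recorded at the outset.
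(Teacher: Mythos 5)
Your proof is correct and is essentially the paper's argument written out in full: the paper's one-line proof likewise combines the stability inequality \eqref{eq:L1Linfty} with the measure comparabilities of $|D(C)'|,|D(C)''|,|D(C)|,|C|$ from Lemma~\ref{lem:D} to get $\|h\|_p\simeq |D(C)|^{1/p}\|h\|_{D(C)}$ and then compares the exponents $p$ and $2$. The only cosmetic difference is that you pass through a per-atom $L^p$--$L^2$ equivalence and then merge the two atoms via the equivalence of $\ell^p$ and $\ell^2$ on $\mathbb{R}^2$, rather than working with the sup-norm on all of $D(C)$ at once.
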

\begin{proof}
    Using \eqref{eq:L1Linfty} and \eqref{eq:summary}, we estimate 
    \[
        \|h \|_p \simeq |D(C)|^{1/p} \|h\|_{D(C)}.
    \]
    This immediately implies the conclusion.
\end{proof}

\begin{lem}\label{lem:f_j}
    Let $R\subset \Omega$ and  $\mathscr B\subset \mathscr A^{\mathscr G}$ be a finite disjoint collection  
    such that $D(C)\cap R \in \{\emptyset,D(C)\}$ 
    for each $C\in \mathscr B$.

    Then we have the following estimate:
    \[
        \Big\| \sum_{C\in\mathscr B} |C|^{1/2+\beta}\varphi_{D(C)} \Big\|_{L^p(R)}^p \simeq 
\sum_{C\in \mathscr B, D(C)\subseteq R} |C|^{p/\sigma}.
    \]

    Moreover,  if $R$ is a finite union of atoms of $\mathscr F_m$ and 
    $j_0$ is the smallest index such that 
    $\mathscr G_{j_0}\supseteq\mathscr F_m$, we have for every $\ell\geq 0$
    \[
       \Big\| \sum_{j\geq j_0+\ell} f_j\Big\|_{L^p(R)}^\alpha \leq \sum_{j\geq j_0+\ell} \| f_j \|_{L^p(R)}^\alpha \lesssim c_L^{\alpha\beta\ell}|R|^{\alpha/\sigma}.
    \]
\end{lem}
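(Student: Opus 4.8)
The plan is to establish the two displayed estimates in turn, obtaining the second one from the first together with a geometric decay of the atom sizes along the subsequence $(\mathscr G_j)$.

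First I would prove the equivalence in the first display. The decisive structural facts are that the functions $\varphi_{D(C)}$ with $C\in\mathscr B$ have pairwise disjoint supports $D(C)\subseteq C$ (since $\mathscr B$ is a disjoint family), and that by hypothesis each $D(C)$ is either contained in $R$ or disjoint from $R$. Hence the $L^p(R)$-norm of the sum splits as
\[
\Big\| \sum_{C\in\mathscr B} |C|^{1/2+\beta}\varphi_{D(C)} \Big\|_{L^p(R)}^p = \sum_{C\in\mathscr B,\, D(C)\subseteq R} |C|^{p(1/2+\beta)} \|\varphi_{D(C)}\|_p^p .
\]
Since $\varphi_{D(C)}$ is of the form $h_1\charfun_{D(C)'}+h_2\charfun_{D(C)''}$ with $h_1,h_2\in S$ and is $L^2$-normalized, Lemma~\ref{lem:p_orth} yields $\|\varphi_{D(C)}\|_p\simeq |C|^{1/p-1/2}$, so that $|C|^{p(1/2+\beta)}\|\varphi_{D(C)}\|_p^p\simeq |C|^{1+p\beta}=|C|^{p/\sigma}$, using the identity $p\beta = p/\sigma-1$. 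This gives the claimed equivalence.

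For the second display, the first inequality is merely the $\alpha$-triangle inequality in $L^p$ with $\alpha=\min(1,p)$. For the remaining bound I would apply the first part with $\mathscr B=\mathscr A_j^{\mathscr G}$ for each $j\geq j_0$: because $R$ is a union of atoms of $\mathscr F_m\subseteq\mathscr G_{j_0}$ and the atoms of $\mathscr G_j$ refine those of $\mathscr F_m$, every $C\in\mathscr A_j^{\mathscr G}$ satisfies $D(C)\cap R\in\{\emptyset,D(C)\}$, and the atoms contained in $R$ partition $R$. Writing $M_j=\max\{|C|:C\in\mathscr A_j^{\mathscr G},\,C\subseteq R\}$ and using $p/\sigma-1=p\beta>0$,
\[
\|f_j\|_{L^p(R)}^p \simeq \sum_{C\subseteq R} |C|^{p/\sigma} \leq M_j^{\,p\beta}\sum_{C\subseteq R}|C| = M_j^{\,p\beta}\,|R|.
\]

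The key quantitative step, and the point I expect to need the most care, is the geometric decay $M_{j_0+\ell}\leq c_L^{\ell}|R|$. I would show that each refinement step shrinks atoms by a factor $c_L$: if $C\in\mathscr A_j^{\mathscr G}$ and $C^*\in\mathscr A_{j+1}^{\mathscr G}$ with $C^*\subseteq C$, then $|C^*|\leq c_L|C|$. Indeed, Lemma~\ref{lem:D} gives $|D(C)|\geq |C|/2$, $|D(C)''|\leq c_L|D(C)|$, and $c_L=2^{-1/L}\geq 1/2$; hence the pieces of $C$ at level $\mathscr G_{j+1}$ lying inside $D(C)$ have measure at most $c_L|D(C)|\leq c_L|C|$, while those lying in $C\setminus D(C)$ have measure at most $|C\setminus D(C)|\leq |C|/2\leq c_L|C|$. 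Iterating this over $\ell$ steps yields $M_{j_0+\ell}\leq c_L^{\ell}M_{j_0}\leq c_L^{\ell}|R|$, so that, taking $p$-th roots above and then $\alpha$-th powers, $\|f_j\|_{L^p(R)}^\alpha\lesssim c_L^{(j-j_0)\alpha\beta}|R|^{\alpha/\sigma}$. Summing the resulting geometric series over $j\geq j_0+\ell$, which converges since $c_L<1$ and $\alpha\beta>0$, produces the factor $c_L^{\alpha\beta\ell}$ and completes the estimate.
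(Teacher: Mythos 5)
Your proof is correct and follows essentially the same route as the paper: disjointness of the supports $D(C)$ together with Lemma~\ref{lem:p_orth} for the first display, and the geometric decay $|C|\le c_L^{\,j-j_0}|R|$ of the atoms of $\mathscr G_j$ contained in $R$ for the second. The only (harmless) deviations are that you bound $\sum_{C\subseteq R}|C|^{p/\sigma}$ by $(\max_C |C|)^{p\beta}\,|R|$ where the paper instead splits the partition into dyadic size classes, and that you explicitly verify the one-step shrinking $|C^*|\le c_L|C|$, which the paper simply attributes to the construction of $(\mathscr G_j)$; both steps are correct.
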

\begin{proof}
    Indeed, since  $\|\varphi_{D(C)}\|_2=1$, we get from Lemma~\ref{lem:p_orth}
\begin{equation}
    \label{eq:f_j}
\begin{aligned}
    \Big\| \sum_{C\in\mathscr B} |C|^{1/2 + \beta} \varphi_{D(C)}\Big\|_{L^p(R)}^p
    &= \sum_{C\in\mathscr B,D(C)\subseteq R } |C|^{p/2 + \beta p} \|\varphi_{D(C)}\|_p^p                        \\
                         & \simeq \sum_{C\in\mathscr B, D(C)\subseteq R} |C|^{p/2+\beta p} |C|^{1-p/2} 
                         = \sum_{C\in\mathscr B, D(C)\subseteq R} |C|^{p/\sigma}.
\end{aligned}
\end{equation}
This concludes the proof of the first part.

If the sets $C\in \mathscr C$ form an arbitrary partition of the set $R$ that satisfies
$|C| \leq \gamma^{k} |R|$ for each $C\in\mathscr C$ and some $\gamma < 1$,
we introduce the splitting
\[
    \mathscr C_\ell = \{ C\in\mathscr C : \gamma^{\ell+1} |R| < |C| \leq \gamma^\ell |R| \}
\]
with $\mathscr C_\ell = \emptyset$ for $\ell<k$.
Moreover,  $\operatorname{card} \mathscr C_\ell \leq \gamma^{-(\ell+1)}$.
Using those things we obtain
\begin{equation}\label{eq:geom}
\begin{aligned}
    \sum_{C\in\mathscr C} |C|^{p/\sigma} &= \sum_{\ell=k}^\infty \sum_{C\in\mathscr C_\ell} |C|^{p/\sigma}
    \leq |R|^{p/\sigma} \sum_{\ell=k}^\infty \gamma^{\ell p/\sigma} \cdot \operatorname{card} \mathscr C_\ell       \\
                                         & \lesssim
    |R|^{p/\sigma} \sum_{\ell=k}^\infty \gamma^{\ell(p/\sigma-1)} \lesssim |R|^{p/\sigma} \cdot \gamma^{k(p/\sigma-1)}.
\end{aligned}
\end{equation}

For $C\in \mathscr A_j^{\mathscr G}$ with $C\subset R$ we have by construction of 
the filtration $(\mathscr G_j)$ that $|C| \leq c_L^{j-j_0} |R|$.
We use the above splitting and \eqref{eq:f_j} for the setting  
$\mathscr B = \mathscr A_j^{\mathscr G}$ for $j\geq j_0$
and $\gamma = c_L$ to get 
    \[
        \sum_{j\geq j_0+\ell} \| f_j \|_{L^p(R)}^\alpha \lesssim c_L^{\ell\beta\alpha} |R|^{\alpha/\sigma},
    \]
    concluding the proof of the lemma.
\end{proof}

In particular, the above lemma implies $f_\Lambda\in L^p$ for every $\Lambda \subset \mathbb N$.

\begin{prop}\label{prop:uncount_set}
    Let $\nu$ in \eqref{eq:def_f} be a sufficiently large constant.

    Then, for every $\Lambda\subset \mathbb N$, 
    the function $f_\Lambda$ defined in \eqref{eq:def_f} satisfies
    \begin{enumerate}
        \item $f_\Lambda \in V_{\sigma,p}$,
        \item for every $\Gamma\subset \mathbb N$ with $\Gamma \neq \Lambda$
         we have $| f_\Lambda - f_\Gamma|_{V_{\sigma,p}} \gtrsim 1$, 
         where the implicit constant is independent of $\Lambda$ or $\Gamma$.
    \end{enumerate}
\end{prop}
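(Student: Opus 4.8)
The plan is to transpose the argument of Proposition~\ref{prop:counter2} to the present setting, with the generalized Haar functions $\varphi_{D(C)}$ playing the role of the Rademachers and the local norm equivalence of Lemma~\ref{lem:p_orth} (together with the $L^2$-orthogonality $\varphi_{D(C)}\perp S|_{D(C)}$) replacing Khintchine's inequality. Throughout I write $\alpha=\min(1,p)$ and recall that each block $f_{\nu j}$ is orthogonal to $S$ below its scale and lies in $S_{n_{\nu j}}$ above it.

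For the upper bound (1) I would fix a partition $\Pi$ of $\Omega$ into disjoint atoms or rings and a single $R=A\setminus B\in\Pi$, let $m$ be the smallest level for which $R$ is $\mathscr F_m$-measurable, and let $j_0$ be the smallest index with $\mathscr G_{j_0}\supseteq\mathscr F_m$. Every block $f_{\nu j}$ with $n_{\nu j}\leq m$ agrees on $R$ with a single element of $S$ (since $R$ sits inside one atom of the coarser $\mathscr F_{n_{\nu j}}$), so it may be subtracted without increasing the $L^p(R)$-distance; after removing all of these one is left with a tail $\|\sum_{j}f_{\nu j}\|_{L^p(R)}$ whose part from index $j_0$ on is $\lesssim |R|^{1/\sigma}$ by the second part of Lemma~\ref{lem:f_j} with $\ell=0$. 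The single borderline block is handled directly by the first part of Lemma~\ref{lem:f_j}, using that the only atom $C$ of the relevant $\mathscr G$-level with $D(C)\subseteq R$ satisfies $|C|\leq 2|D(C)|\leq 2|R|$ by Lemma~\ref{lem:D}. Thus $E_p(f_\Lambda,R)^\sigma\lesssim |R|$, and summing over the disjoint $R\in\Pi$ gives $\sum_R E_p(f_\Lambda,R)^\sigma\lesssim \sum_R|R|\leq 1$; taking the supremum over $\Pi$ yields $|f_\Lambda|_{V_{\sigma,p}}\lesssim 1$. Note this part does not require $\nu$ large.

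For the lower bound (2) I would let $n$ be the smallest element of the symmetric difference $\Gamma\Delta\Lambda$, so that on each atom $C$ of $\mathscr G_{\nu n}$ all blocks below level $n$ cancel and $f_\Lambda-f_\Gamma=\pm f_{\nu n}+\sum_{k>n}\pm f_{\nu k}$, and I would test the variation seminorm against the partition of $\Omega$ into the atoms $C$ of $\mathscr G_{\nu n}$. On such a $C$ the leading block restricts to $f_{\nu n}\charfun_C=|C|^{1/2+\beta}\varphi_{D(C)}$, and the key claim is $E_p(f_{\nu n},C)\gtrsim|C|^{1/\sigma}$. Indeed $E_p(f_{\nu n},C)\geq E_p(f_{\nu n},D(C))$ because $f_{\nu n}\charfun_C$ is supported on $D(C)$; since each competitor $\varphi_{D(C)}-g$ with $g\in S$ is piecewise in $S$ on the two children of $D(C)$, Lemma~\ref{lem:p_orth} converts the $L^p(D(C))$-distance into a comparable $|C|^{1/p-1/2}$ times the $L^2(D(C))$-distance, which is $\geq |C|^{1/p-1/2}\|\varphi_{D(C)}\|_{L^2(D(C))}=|C|^{1/p-1/2}$ by orthogonality; homogeneity then gives $E_p(f_{\nu n},C)\gtrsim|C|^{1/2+\beta+1/p-1/2}=|C|^{1/\sigma}$. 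The finer-scale remainder is absorbed by the second part of Lemma~\ref{lem:f_j}: all $f_{\nu k}$, $k>n$, start at least $\nu$ generations of $(\mathscr G_j)$ below $C$, whence $\|\sum_{k>n}\pm f_{\nu k}\|_{L^p(C)}\lesssim c_L^{\beta\nu}|C|^{1/\sigma}$. Combining the two estimates via the reverse $\alpha$-triangle inequality gives $E_p(f_\Lambda-f_\Gamma,C)^\alpha\geq(c-C'c_L^{\alpha\beta\nu})|C|^{\alpha/\sigma}$, and choosing $\nu$ so large that $C'c_L^{\alpha\beta\nu}\leq c/2$ forces $E_p(f_\Lambda-f_\Gamma,C)^\sigma\gtrsim|C|$. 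Summing over the atoms $C$ of $\mathscr G_{\nu n}$, which partition $\Omega$, yields $|f_\Lambda-f_\Gamma|_{V_{\sigma,p}}^\sigma\gtrsim\sum_C|C|=1$, with a constant independent of $\Lambda$ and $\Gamma$.

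The main obstacle is the lower bound in~(2): one must ensure that the leading block $f_{\nu n}$ on each $C$ survives both the best $S$-approximation and the interference of the finer blocks $f_{\nu k}$, $k>n$. The first is exactly why I pass through the $L^2$-orthogonality of $\varphi_{D(C)}$ to $S$ via Lemma~\ref{lem:p_orth}, and the second is what forces the large spacing $\nu$, exploiting the geometric decay $c_L^{\beta\nu}$; this is the precise point where the density assumption~\eqref{eq:density}, which guarantees $c_L<1$, is used. By comparison, part~(1) is routine once the locally-$S$ blocks are subtracted, the only mild subtlety being the single borderline block, where the binary geometry provides $|C|\lesssim|R|$.
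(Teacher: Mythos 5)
Your part (2) is essentially the paper's own argument and is sound: you test against the partition of $\Omega$ into the atoms $C$ of $\mathscr G_{\nu n}$ for $n=\min(\Gamma\Delta\Lambda)$, obtain $E_p(f_{\nu n},C)\gtrsim |C|^{1/\sigma}$ from Lemma~\ref{lem:p_orth} together with the $L^2$-orthogonality of $\varphi_{D(C)}$ to $S_{D(C)}$, absorb the finer blocks using the geometric factor $c_L^{\alpha\beta\nu}$ from Lemma~\ref{lem:f_j}, and then fix $\nu$ large; this is exactly how the paper proceeds. The gap is in part (1), in the treatment of the blocks whose scale lies strictly between that of $A$ and that of $B$ for a ring $R=A\setminus B$ with $A\in\mathscr A_n$, $B\in\mathscr A_m$, $n<m$.

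First, your claim that every block $f_{\nu j}$ with $n_{\nu j}\leq m$ agrees on $R$ with a single element of $S$ is false as soon as $n_{\nu j}>n$: then $A$ is split into several atoms of $\mathscr G_{\nu j}$, so $f_{\nu j}\charfun_R$ is a sum of several $\varphi_{D(C)}$ and does not lie in $S_R$; only the blocks coarser than $A$ may be subtracted (the paper's step $E_p(f_\Lambda,R)=E_p(f_\Lambda-P_nf_\Lambda,R)$). Since $B$ may be an arbitrarily deep descendant of $A$, there can be many such intermediate blocks, not ``a single borderline block''. Second, to apply the first part of Lemma~\ref{lem:f_j} to a borderline block one must verify that $D(C)\cap R\in\{\emptyset,D(C)\}$ for every relevant $C$, and this can fail: the atom $C$ with $B\subsetneq D(C)$ gives $D(C)\cap R=D(C)\setminus B$. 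The paper resolves precisely this point by a dichotomy you omit. If $|R|\geq(1-c_L)|A|$, the crude bound $E_p(f_\Lambda,R)\leq E_p(f_\Lambda,A)\lesssim |A|^{1/\sigma}\lesssim |R|^{1/\sigma}$ suffices and no borderline analysis is needed. If $|R|<(1-c_L)|A|$, i.e.\ $|B|>c_L|A|$, then $B\subsetneq D(C)$ is impossible, since it would force $B\subseteq D(C)'$ or $B\subseteq D(C)''$ and hence $|B|\leq |D(C)''|\leq c_L|D(C)|\leq c_L|A|$, a contradiction; this is where the defining property of $D(C)$ from Lemma~\ref{lem:D} is actually used, and the same reasoning shows $\mathscr G_{(j_0+1)\nu}\supseteq\mathscr F_m$, which is what reduces the intermediate blocks to the single block $f_{j_0\nu}$ and makes the tail estimate of Lemma~\ref{lem:f_j} applicable to $R$. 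Without this case distinction your bound $E_p(f_\Lambda,R)^\sigma\lesssim |R|$ is not justified for rings in which $B$ occupies most of $A$, and part (1) is incomplete.
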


\begin{proof}
Let $R = A\setminus B$ for two atoms $A,B$ of the filtration $(\mathscr F_n)$
($B=\emptyset$ is possible). 
Choose $n$ such that $A\in\mathscr A_n$ and choose $m$ such that 
$B\in \mathscr A_m$. Then, $R = A\setminus B$ is a finite union
of atoms of $\mathscr F_m$.
Let $j_0$ be the smallest index such that
$\mathscr G_{j_0\nu}\supseteq\mathscr F_n$.
We now estimate $E_p(f,R)$ from above.

\textsc{Case 1: $|R| \geq (1-c_L)|A|$. }
Here we have by Lemma~\ref{lem:f_j}
\begin{align*}
    E_p(f_\Lambda,R)^\alpha &\leq E_p(f_\Lambda,A)^\alpha = 
    \Big\| \sum_{j\geq j_0, j\in\Lambda} f_{j\nu}\Big\|_{L^p(A)}^\alpha \\
    &\leq \sum_{j\geq j_0,j\in\Lambda} \|f_{j\nu}\|_{L^p(A)}^\alpha
    \lesssim |A|^{1/\sigma}\lesssim |R|^{1/\sigma}.
\end{align*}
\textsc{Case 2: } $|R|<(1-c_L)|A|$. This means that $|B| > c_L|A|$.
We observe that $\mathscr G_{(j_0+1)\nu}\supsetneq\mathscr F_m$.
Indeed, if this would not be the case, a superset $D\supseteq B$ would be an atom 
of $\mathscr G_{(j_0+1)\nu}$ and a set $E$ with $D\subset E\subset A$ is an atom of $\mathscr G_{j_0\nu}$.
By construction of the filtration $(\mathscr G_j)$, we have $|D| \leq c_L |E|$, which
yields a contradiction as follows:
\[
|E| \leq |A| < c_L^{-1}|B| \leq c_L^{-1} |D| \leq |E|.
\]

Then, 
\begin{equation}\label{eq:G}
    \begin{aligned}
        E_p(f_\Lambda,R)^\alpha & =  E_p( f_\Lambda - P_n f_\Lambda, R)^\alpha
        = E_p\Big( \sum_{j\geq j_0,j\in\Lambda} f_{j\nu},R\Big)^\alpha \\
        &\leq \sum_{j > j_0,j\in\Lambda} \|f_{j\nu}\|_{L^p(R)}^\alpha + E_p(f_{j_0 \nu},R)^\alpha  \\
        &\lesssim |R|^{\alpha/\sigma} + E_p \Big( \sum_{C\in\mathscr A_{j_0\nu}^{\mathscr G}} |C|^{1/2+\beta} \varphi_{D(C)},R\Big)^\alpha,
    \end{aligned}
\end{equation}
where in the last step, we used Lemma~\ref{lem:f_j}.
Next, we observe that for every $C\in\mathscr A_{j_0\nu}^{\mathscr G}$, the 
strict inclusion $B\subsetneq D(C)$ is impossible.
Indeed, this would mean that $B\subseteq D(C)'$ or $B\subseteq D(C)''$
in the underlying case and  the construction of $D(C)$ gives the contradiction 
\[
    c_L |D(C)| \leq c_L |A| < |B| \leq |D(C)''| \leq c_L |D(C)|.
\]
Let $\mathscr C_A = \{ C\in \mathscr A_{j_0\nu}^{\mathscr G} : C\subseteq A\}$. Then
this argument and the nestedness of atoms shows
that for $C\in\mathscr C_A$, we have one of the two possibilities  
    $B\cap D(C) = \emptyset$ or $D(C)\subseteq B$. This means that 
    $\mathscr B = \mathscr C_A$ also satisfies the assumptions of Lemma~\ref{lem:f_j}, 
    and thus
\begin{align*}
    E_p \Big( \sum_{C\in\mathscr A_{j_0\nu}^{\mathscr G}} |C|^{1/2+\beta} \varphi_{D(C)},R\Big)^\alpha
    &= E_p \Big( \sum_{C\in\mathscr C_A } |C|^{1/2+\beta} \varphi_{D(C)},R\Big)^\alpha \\
    &\lesssim \Big\| \sum_{C\in\mathscr C_A} |C|^{1/2+\beta} \varphi_{D(C)}\Big\|_{L^p(R)}^\alpha \\
    &\simeq \Big(\sum_{C\in\mathscr C_A, D(C)\subseteq R} |C|^{p/\sigma}\Big)^{\alpha/p} \lesssim |R|^{\alpha/\sigma},
\end{align*}
since $p/\sigma>1$.
Combining cases 1 and 2, we have proved the inequality 
\[
E_p(f,R) \lesssim |R|^{1/\sigma}    
\]
for every atom or ring $R\subset \Omega$.
If now $\Pi$ is any partition 
of $\Omega$ into a finite number of disjoint atoms or rings, this estimate yields
\[
    \sum_{R\in\Pi} E_p(f,R)^\sigma \lesssim \sum_{R\in\Pi} |R| \leq 1.    
\]
Therefore we get $f\in V_{\sigma,p}$ and thus item (i) is proved.

Next, we want to prove for every $\Gamma\subset \mathbb N$ with $\Gamma\neq \Lambda$
 the inequality $|f_\Lambda-f_\Gamma|_{V_{\sigma,p}} \gtrsim 1$.
We choose $j_0$ to be the smallest integer contained in the 
symmetric difference $\Lambda\Delta\Gamma$ and set $\mathscr C := \mathscr A_{j_0}^{\mathscr G}$.
We use the collection $\mathscr C$ for estimating 
$|f_\Lambda-f_\Gamma|_{V_{\sigma,p}}$ as follows:
\begin{equation}\label{eq:lower1}
    |f_\Lambda - f_\Gamma|_{V_{\sigma,p}}^\sigma \geq \sum_{C\in\mathscr C} E_p(f_\Lambda - f_\Gamma , C)^\sigma
    = \sum_{C\in\mathscr C}  E_p\Big(\sum_{j\geq j_0} \varepsilon_j f_{j\nu},C\Big)^\sigma,
\end{equation}
where $\varepsilon_j = 1$ if $j\in\Lambda\setminus \Gamma$, $\varepsilon_j = -1$ if $j\in\Gamma\setminus \Lambda$
and $\varepsilon_j = 0$ otherwise.
Next, we estimate for $C\in\mathscr C$
\begin{equation}\label{eq:lower_triangle_var}
E_p\Big(\sum_{j\geq j_0} \varepsilon_j f_{j\nu},C\Big)^\alpha \geq E_p(f_{j_0\nu},C)^\alpha - 
\Big\| \sum_{j>j_0} f_{j\nu}\Big\|_{L^p(C)}^\alpha.
\end{equation}
By Lemma~\ref{lem:f_j}, we have for $C\in\mathscr C$
\begin{equation}\label{eq:small}
 \Big\| \sum_{j>j_0} f_{j\nu}\Big\|_{L^p(C)}^\alpha \lesssim c_L^{\alpha\beta\nu} |C|^{\alpha/\sigma}.
\end{equation}
For the other term, use Lemma~\ref{lem:p_orth} to obtain
\begin{equation}
    \label{eq:large}
\begin{aligned}
E_p(f_{j_0\nu}, C) &= \inf_{g\in S} \| |C|^{1/2+\beta}\varphi_{D(C)}  - g\|_{L^p(C)} \\
&\simeq| C |^{1/p-1/2} \inf_{g\in S}  \| |C|^{1/2+\beta}\varphi_{D(C)}  - g\|_{L^2(C)} \\
&\simeq | C |^{1/p-1/2}   \| |C|^{1/2+\beta}\varphi_{D(C)} \|_{L^2(C)} 
\simeq |C|^{1/\sigma},
\end{aligned}
\end{equation}
where we also used orthogonality of $\varphi_{D(C)}$ to functions contained 
in $S$ on $C$.
Using now \eqref{eq:small} and \eqref{eq:large}, we now choose $\nu$ 
sufficiently large such that 
\[
  \Big\| \sum_{j>j_0(C)} f_{j\nu}\Big\|_{L^p(C)}   \leq E_p(f_{j_0\nu},C) /2.
\]
Thus, inserting those things into \eqref{eq:lower1} and \eqref{eq:lower_triangle_var},
\begin{align*}
    |f_\Lambda-f_\Gamma|_{V_{\sigma,p}}^\sigma \gtrsim \sum_{C\in\mathscr C} |C| = 1.
\end{align*}
since $\mathscr C$ is a partition of $\Omega$.
This proves also item (2) and concludes the proof
of the proposition.
\end{proof}
We are now ready to give the 
\begin{proof}[Proof of Theorem~\ref{thm:not_sep}]
We just note that the proof of Theorem~\ref{thm:not_sep} is 
the same as the proof of Theorem~\ref{thm:sep} if we use Proposition~\ref{prop:uncount_set}
instead of Proposition~\ref{prop:counter2}.
\end{proof}

\section{Extensions to weak greedy algorithms}
\label{sec:appendix_quasi_greedy}
In this section we describe extensions of Lemma~\ref{lem:abstract} and Corollary~\ref{thm:KA}
to certain weak greedy algorithms.

First, we note that, with the same proof, Lemma~\ref{lem:abstract} has the following, more general variant.
\begin{lem}\label{lem:abstract_ext}
    Assume that $(\spp,\|\cdot \|)$ is a Banach space, $\Psi = (\psi_n)$ a greedy basis in $\spp$, and
    let $\mathcal Y\subset \spp$ a continuously embedded semi-quasi-normed space such that $\psi_n\in \mathcal Y$
    for every $n$. Let $\beta >0$ and let for $f\in \mathcal X$ the sequence $(A_n f)_{n\geq 1}$ be 
    a sequence of finite linear combinations of elements of $\Psi$ such that 
    \begin{enumerate}
        \item there exists a constant $m$ such that for all $f\in \mathcal X$ we have $A_n f\in \Sigma_{nm}^{\Psi}$,
        \item there  exists a constant $c>0$ such that for all $f\in \mathcal X$ we have 
        \[
            \| f - A_n f\| \leq c\cdot \sigma_n(f,\Psi).
        \]
    \end{enumerate}

    Then, the assertion of Lemma~\ref{lem:abstract} holds with $A_n f$ replacing 
    the greedy approximant $\mathscr G_n f$.
\end{lem}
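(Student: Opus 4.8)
The plan is to rerun the proof of Lemma~\ref{lem:abstract} essentially verbatim, substituting $A_n$ for the greedy approximant $\mathscr G_n$ in both the asserted equivalence and the three conditions, and tracking the harmless change of constants caused by $A_nf$ living in $\Sigma_{nm}^{\Psi}$ rather than in $\Sigma_n^{\Psi}$. The proof of Lemma~\ref{lem:abstract} uses only two structural facts about $\mathscr G_n$: that $\mathscr G_nf\in\Sigma_n^{\Psi}$ and that $\|f-\mathscr G_nf\|\simeq\sigma_n(f,\Psi)$. Both are supplied by the two standing hypotheses on $A_n$ in Lemma~\ref{lem:abstract_ext}: the support hypothesis $A_nf\in\Sigma_{nm}^{\Psi}$ gives $\sigma_{nm}(f,\Psi)\le\|f-A_nf\|$, and the near-best hypothesis $\|f-A_nf\|\le c\,\sigma_n(f,\Psi)$ gives the reverse. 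I would also record once the consequence that $A_ng=g$ for every $g\in\Sigma_n^{\Psi}$, since for such $g$ the near-best hypothesis forces $\|g-A_ng\|\le c\,\sigma_n(g,\Psi)=0$.

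For the direction \emph{conditions $\Rightarrow$ equivalence}, the upper bound $K(f,n^{-\beta})\le\|f-A_nf\|+n^{-\beta}|A_nf|_{\mathcal Y}$ is immediate from the admissible choice $g=A_nf\in\mathcal Y$ in the $K$-functional. For the lower bound I fix $g\in\mathcal Y$ and argue as in Lemma~\ref{lem:abstract}. The near-best hypothesis and the elementary bound $\sigma_n(f)\le\|f-g\|+\sigma_n(g)$, combined with condition~(1) of Lemma~\ref{lem:abstract} read with $A_n$ in place of $\mathscr G_n$ (which, via $A_ng\in\Sigma_{nm}^{\Psi}$ and monotonicity of $(\sigma_k)_k$, yields the Jackson bound $\sigma_n(g)\lesssim n^{-\beta}|g|_{\mathcal Y}$), give $\|f-A_nf\|\lesssim\|f-g\|+n^{-\beta}|g|_{\mathcal Y}$. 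For the seminorm term I split $A_nf=(A_nf-A_ng)+A_ng$; the difference lies in $\Sigma_{2nm}^{\Psi}$, so condition~(2) of Lemma~\ref{lem:abstract} yields $|A_nf-A_ng|_{\mathcal Y}\lesssim(2nm)^{\beta}\|A_nf-A_ng\|\simeq n^{\beta}\|A_nf-A_ng\|$, while the triangle inequality together with the two previous estimates gives $\|A_nf-A_ng\|\lesssim\|f-g\|+n^{-\beta}|g|_{\mathcal Y}$. Combined with condition~(3) of Lemma~\ref{lem:abstract} (again for $A_n$), this bounds $n^{-\beta}|A_nf|_{\mathcal Y}\lesssim\|f-g\|+n^{-\beta}|g|_{\mathcal Y}$, and taking the infimum over $g$ finishes this direction.

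For the converse \emph{equivalence $\Rightarrow$ conditions}, I would test the equivalence against the trivial competitors in the $K$-functional, exactly as in Lemma~\ref{lem:abstract}. Choosing $g=f$ gives $K(f,n^{-\beta})\le n^{-\beta}|f|_{\mathcal Y}$, whence $\|f-A_nf\|\lesssim n^{-\beta}|f|_{\mathcal Y}$ and $|A_nf|_{\mathcal Y}\lesssim|f|_{\mathcal Y}$, i.e.\ conditions~(1) and~(3) for $A_n$. For condition~(2), take $g\in\Sigma_n^{\Psi}$; then $A_ng=g$, so the equivalence collapses to $n^{-\beta}|g|_{\mathcal Y}\simeq K(g,n^{-\beta})\le\|g\|$, which is exactly the claim.

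The only real obstacle is bookkeeping: one must verify that enlarging the support bound from $\Sigma_n$ to $\Sigma_{nm}$ (and, for differences, to $\Sigma_{2nm}$) changes nothing but constants. This rests on $(Cn)^{\beta}\simeq n^{\beta}$ for fixed $C$ and on the observation that the one-sided near-best bound $\sigma_{nm}(f)\lesssim n^{-\beta}|f|_{\mathcal Y}$ upgrades, by monotonicity of the errors $(\sigma_k)_k$ and reindexing $k\approx nm$, to $\sigma_n(f)\lesssim n^{-\beta}|f|_{\mathcal Y}$ at every index. With these two remarks the estimates are word-for-word those for the greedy approximant, and no property of $\Psi$ beyond what is already encoded in the hypotheses on $A_n$ is needed.
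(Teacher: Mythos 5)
Your proposal follows exactly the route the paper intends: the paper's entire proof of Lemma~\ref{lem:abstract_ext} is the one-line remark that it holds ``with the same proof'' as Lemma~\ref{lem:abstract}, and your identification of the only two structural facts used about $\mathscr G_n$ --- membership in $\Sigma_n^\Psi$ and near-best approximation --- together with the observation that $A_ng=g$ for $g\in\Sigma_n^\Psi$, is precisely the right way to make that remark precise. The converse direction (testing the equivalence against $g=f$ and against $g\in\Sigma_n^\Psi$) and the treatment of condition (2) are correct as written.

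There is, however, one overstatement in your bookkeeping. You claim that the bound $\sigma_{nm}(f)\lesssim n^{-\beta}|f|_{\mathcal Y}$ upgrades ``by monotonicity and reindexing to $\sigma_n(f)\lesssim n^{-\beta}|f|_{\mathcal Y}$ at every index''. The reindexing $\sigma_k(g)\le\sigma_{\lfloor k/m\rfloor m}(g)\le\|g-A_{\lfloor k/m\rfloor}g\|\lesssim\lfloor k/m\rfloor^{-\beta}|g|_{\mathcal Y}\simeq k^{-\beta}|g|_{\mathcal Y}$ works for $k\ge m$, but degenerates for $1\le k<m$, where $\lfloor k/m\rfloor=0$ and the hypotheses give no control of $\sigma_k(g)$ (equivalently, of $\sigma_1(g)$) in terms of the seminorm $|g|_{\mathcal Y}$ alone: condition (1) only yields $\sigma_m(g)\lesssim|g|_{\mathcal Y}$, which is useless here since $\sigma_1\ge\sigma_m$, and condition (2) goes in the opposite direction. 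This step is load-bearing in your lower bound $\|f-A_nf\|\le c\,\sigma_n(f)\le c(\|f-g\|+\sigma_n(g))$ at the finitely many indices $n<m$. The gap is harmless in every application in the paper (there $\mathcal Y$ is $\mathcal A_q^\beta$ or a $\mathrm{BV}$-type space with $\|g\|\lesssim\|g\|_{\mathcal Y}$, so $\sigma_n(g)\le\|g\|\lesssim\|g\|_{\mathcal Y}\simeq n^{-\beta}\|g\|_{\mathcal Y}$ disposes of the finitely many small $n$), but for a general continuously embedded semi-quasi-normed $\mathcal Y$ you should either add such a remark or state the equivalence for $n\ge m$, which changes nothing downstream since only $n=2^j\to\infty$ is ever used.
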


We now discuss an example of a possible choice of $A_n f$ for $f\in \mathcal X$ satisfying conditions (1),(2) in the 
above lemma.
We let $\Psi = (\psi_n)_{n\geq 1}$ be a normalized greedy basis in $\mathcal X$ and we split the positive 
integers $\mathbb N$ into $\mathbb N = \cup_{j=1}^\infty \Gamma_j$ for some sets $\Gamma_j$ satisfying 
the following two conditions for some constant $m$:
\begin{enumerate}[(i)]
    \item $\Gamma_{j_1} \cap \Gamma_{j_2} = \emptyset$ for $j_1 \neq j_2$,
    \item $1\leq \card\Gamma_j \leq m$.
\end{enumerate}
We take the block of $\Psi$ corresponding to $\Gamma_j$ by setting
\[
W_j f = \sum_{n\in\Gamma_j}    \psi_n^*(f) \psi_n,
\]
denoting by $(\psi_n^*)$ the functionals dual to the basis $(\psi_n)$.
Additionally, we denote the largest coefficient of $f$ in $\Gamma_j$ by  
\[
\mu_j(f) = \max_{n\in\Gamma_j} |\psi_n^*(f)|.
\]
Then, we fix $0<t\leq 1$ and perform a weak greedy algorithm with respect to the sequence 
$(\mu_j(f))$, i.e., we choose a sequence $(j_k)_{k\geq 1}$ such that 
\begin{equation}\label{eq:def_mu}
    \min_{1\leq \ell \leq k} \mu_{j_\ell}(f) \geq t \max_{\ell \geq k+1} \mu_{j_\ell}(f).
\end{equation}
Using this sequence, put
\[
B_k f = \sum_{\ell = 1}^k W_{j_\ell} f.
\]
Without restriction, we assume that $B_k f$ has the property that for all positive integers $M,n$ we 
have 
\begin{equation}\label{eq:BM}
    B_n(B_M f) =\begin{cases}
        B_n f,& \text{if } n\leq M, \\
        B_M f,& \text{if } n > M.
    \end{cases}
\end{equation}
This is possible since each choice of $(j_k)$ satisfying \eqref{eq:def_mu} for $f\in \mathcal X$
also satisfies \eqref{eq:def_mu} for $B_M f$ replacing $f$.
\begin{prop}\label{prop:quasi_greedy}
    The choice $A_n=B_n$ satisfies conditions (1) and (2) of Lemma \ref{lem:abstract_ext}.
\end{prop}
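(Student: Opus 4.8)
The plan is to verify the two conditions of Lemma~\ref{lem:abstract_ext} for the choice $A_n = B_n$ directly; the first is immediate, while the second amounts to a Lebesgue-type (near-best approximation) inequality for the block weak greedy algorithm.

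For condition (1), I would simply observe that $B_n f = \sum_{\ell=1}^n W_{j_\ell} f$ is supported on $\Lambda_n := \bigcup_{\ell=1}^n \Gamma_{j_\ell}$, and since each block satisfies $\card \Gamma_{j_\ell} \leq m$ by (ii), we get $\card \Lambda_n \leq nm$, so that $B_n f \in \Sigma_{nm}^{\Psi}$. Thus condition (1) holds with the same constant $m$.

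The content is in condition (2), i.e. the comparison $\|f - B_n f\| \lesssim \sigma_n(f,\Psi)$. Here I would use that a greedy basis is, by the Konyagin--Temlyakov characterization, unconditional (say with suppression constant $K$) and democratic (with nondecreasing fundamental function $\phi(r) \simeq \|\sum_{k\in E}\psi_k\|$ for $\card E = r$). Write $f = \sum_k c_k\psi_k$ with $c_k = \psi_k^*(f)$, fix a set $T$ with $\card T = n$ realizing, up to a fixed factor, the best approximation $\sigma_n(f,\Psi) \simeq \|f-p\|$ with $p = \sum_{k\in T} b_k \psi_k$, and set $\theta := \min_{1\leq \ell\leq n} \mu_{j_\ell}(f)$. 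By the weak greedy selection rule \eqref{eq:def_mu}, every index $k\notin\Lambda_n$ lies in an unselected block, whence $|c_k| \leq \theta/t$. I would then decompose, with $S_T f := \sum_{k\in T} c_k\psi_k$,
\[
f - B_n f = \sum_{k\notin\Lambda_n} c_k\psi_k = \sum_{k\notin\Lambda_n,\,k\notin T} c_k\psi_k + \sum_{k\in T\setminus\Lambda_n} c_k\psi_k.
\]
The first sum is a coordinate projection of $f - S_T f$, so by unconditionality and the standard bound $\|f - S_T f\| \lesssim \|f-p\| = \sigma_n$ it is controlled by $\sigma_n(f,\Psi)$. For the second sum, let $B$ be the set of the $n$ block-maxima indices of the selected blocks; then $B\subseteq\Lambda_n$ and $|c_k|\geq\theta$ for $k\in B$, which gives $\card(T\setminus\Lambda_n) \leq \card(B\setminus T)$. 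Using democracy together with $|c_k|\leq\theta/t$ on $T\setminus\Lambda_n$ and $|c_k|\geq\theta$ on $B\setminus T$, I would estimate
\[
\Big\|\sum_{k\in T\setminus\Lambda_n} c_k\psi_k\Big\| \lesssim \frac{\theta}{t}\,\phi(\card(T\setminus\Lambda_n)) \lesssim \frac{1}{t}\,\theta\,\phi(\card(B\setminus T)) \lesssim \frac{1}{t}\Big\|\sum_{k\in B\setminus T} c_k\psi_k\Big\|,
\]
and since $B\setminus T \subseteq \{k\notin T\}$, the last term is again a projection of $f - S_T f$, hence $\lesssim \tfrac1t \sigma_n(f,\Psi)$. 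Combining the two contributions yields $\|f - B_n f\| \lesssim (1 + 1/t)\,\sigma_n(f,\Psi)$, which is condition (2).

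The main obstacle is precisely the second sum: one must convert the contribution of the large coefficients that the block algorithm fails to capture into a quantity controlled by $\sigma_n$. This is exactly where the weakness parameter $t$ and the block structure enter, through the cardinality comparison $\card(T\setminus\Lambda_n)\leq\card(B\setminus T)$ and the transfer of the estimate across index sets of equal cardinality via democracy; the factor $1/t$ produced here is the source of the $t$-dependence of the resulting constant $c$. Condition \eqref{eq:BM}, guaranteed by the remark that any admissible selection for $f$ is admissible for $B_M f$, is what makes $B_n$ behave consistently under iteration, but it plays no role in establishing (1) and (2).
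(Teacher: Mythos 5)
Your proof is correct, but it follows a genuinely different route from the paper's. For condition (2), the paper does not prove the Lebesgue-type inequality for $B_n$ directly: it constructs, by induction, a single-index selection $(n_i)$ with $\{n_1,\ldots,n_k\}\subset\bigcup_{\ell=1}^k\Gamma_{j_\ell}$ that satisfies the weak thresholding greedy condition with parameter $\theta=ts$, invokes the known Lebesgue inequality for weak greedy algorithms with respect to greedy bases (Theorem~1.39 of Temlyakov's monograph) to get $\|f-L_kf\|\lesssim\sigma_k(f,\Psi)$, and then passes to $B_k$ by unconditionality, since $f-B_kf$ is a coordinate projection of $f-L_kf$. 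You instead inline the Konyagin--Temlyakov-style argument: you split $f-B_nf$ over $(\Lambda_n\cup T)^c$ and $T\setminus\Lambda_n$, control the first piece by unconditionality applied to $(I-S_T)(f-p)$, and control the second by the cardinality comparison $\card(T\setminus\Lambda_n)\le\card(B\setminus T)$ (with $B$ the set of block maxima, $\card B=n=\card T$, $B\subseteq\Lambda_n$) together with democracy and the two-sided coefficient bounds $|c_k|\le\theta/t$ off $\Lambda_n$ versus $|c_k|\ge\theta$ on $B$. Your set $B$ plays the role of the paper's sequence $(n_i)$, but you never need to verify a weak-greedy property for it or cite the external theorem; the price is that you reprove the standard Lebesgue inequality mechanism, and the benefit is a self-contained argument with an explicit constant of order $1+1/t$. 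Both arguments share the implicit assumption that $(j_k)$ enumerates all blocks (so that every unselected index lies in some $\Gamma_{j_\ell}$ with $\ell\ge n+1$), and your closing observation that \eqref{eq:BM} is not needed for (1) and (2) matches the paper, which uses \eqref{eq:BM} only for item (3) of Lemma~\ref{lem:abstract}.
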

\begin{proof}
    It is clear that condition (1) is satisfied with the upper bound $m$ for the cardinality 
    of $\Gamma_j$.

    In order to check condition (2), we show that there exists a constant $0<\theta<1$ such that 
    for each $f\in \mathcal X$ we have  
    a sequence $(n_i)_{i\geq 1}$ such that $\{ n_1,\ldots, n_k\} \subset \cup_{\ell=1}^k \Gamma_{j_\ell}$ 
    and 
    \[
        \min_{1\leq i\leq k} |\psi_{n_i}^*(f)| \geq \theta \max_{n\notin \{n_1,\ldots,n_k\}} |\psi_{n}^*(f)|.
    \]
    First, let us show that this is enough to show condition (2). To this end, let 
    \[
        L_k f = \sum_{i=1}^k \psi_{n_i}^*(f) \psi_{n_i},
    \]
    which is then a weak greedy algorithm with parameter $\theta$ and with respect 
    to the greedy basis $\Psi$. By \cite[Theorem 1.39]{greedy} we obtain 
    \[
        \| f - L_k f \| \leq C \sigma_k(f,\Psi)    
    \]
    for some constant $C$ depending on $\Psi$ and $\theta$. Since $\Psi$ is also an 
    unconditional basis in $\mathcal X$, we have 
    \[
        \| f - B_k f\| \lesssim \|f - L_k f\| \lesssim \sigma_k(f,\Psi).
    \]
    This implies that  condition (2) is satisfied with $A_k = B_k$.

    Now we proceed with the inductive construction of the sequence $(n_i)_{i\geq 1}$. We fix $0< s \leq 1$  and 
    put $\theta = ts$.
For $k\geq 1$,
    suppose we have already chosen $\{ n_1,\ldots,n_{k-1}\}$.     
    Then, take 
    $n_{k}\in Z_{k} := \cup_{\ell = 1}^{k} \Gamma_{j_\ell} \setminus \{n_1,\ldots,n_{k-1}\} $ 
    such that 
    \[
        |\psi_{n_{k}}^*(f)| \geq s \max_{n\in Z_{k}} |\psi_n^*(f)|
    \]
    and set $\tilde{Z}_{k} = Z_{k}\setminus \{n_{k}\}$.
    It is clear that this construction satisfies $\{n_1,\ldots,n_k\} \subset \cup_{\ell=1}^k \Gamma_{j_\ell}$.
    Therefore, the only thing left to check is that for each $k\geq 1$ we have the inequality 
    \begin{equation}\label{eq:n_k_left_to_prove}
        |\psi_{n_k}^*(f)| \geq \theta \max_{n\notin \{n_1,\ldots,n_k\}} |\psi_n^*(f)|.
    \end{equation}
    If $n$ is chosen such that $n \notin \{n_1,\ldots,n_k\}$ we have 
    \[
         n\in \tilde{Z}_k \subset \bigcup_{\ell=1}^k \Gamma_{j_\ell}
        \qquad\text{or}\qquad n \notin \bigcup_{\ell=1}^k \Gamma_{j_\ell}.
    \]
    If $n\in\tilde{Z}_k$ we have by definition 
    \[
        |\psi_{n_k}^*(f)| \geq s \max_{\ell\in Z_k} |\psi_\ell^*(f)| \geq s|\psi_n^*(f)| \geq \theta |\psi_n^*(f)|.
    \]
    On the other hand, if $n\notin \cup_{\ell=1}^k \Gamma_{j_\ell}$, we choose $\xi \notin \{ j_1,\ldots,j_k\}$ 
    such that $n\in \Gamma_\xi$. Then, since $\Gamma_{j_k} \subset Z_k$,
    \[
       |\psi_{n_k}^*(f) | \geq s \max_{\ell\in Z_k} |\psi_\ell^*(f)| \geq s\mu_k(f) \geq s t \mu_\xi(f)
       \geq st |\psi_n^*(f)| = \theta |\psi_n^*(f)|,
    \]
    finishing the proof of \eqref{eq:n_k_left_to_prove}. Therefore, the proof of the proposition is 
    complete.
\end{proof}

We now want to apply Lemma~\ref{lem:abstract_ext} to the setting of approximation spaces 
and get a result similar to Corollary~\ref{thm:KA} with $B_n f$ replacing $\mathscr G_n f$.
In order to do that, we now show that $\mathcal Y = \mathcal A_q^\beta(\mathcal X,\Psi)$
satisfies items (1)--(3) of Lemma~\ref{lem:abstract} with $B_k f$ instead of $\mathscr G_k f$.

Indeed, since item (2) of Lemma~\ref{lem:abstract} does not use $\mathscr G_n f$, the proof of Corollary~\ref{thm:KA} implies (2) of 
Lemma~\ref{lem:abstract}. In order to prove items (1) and (3) of Lemma~\ref{lem:abstract}, we will 
only consider the case $q<\infty$ (the case $q=\infty$ is proved by analogous arguments) and 
use that 
\begin{equation}\label{eq:approx_equiv}
    \| f\|_{\mathcal A_q^\beta} \simeq \|f\|_{\mathcal X} + \Big( \sum_{n=1}^\infty \big(n^\beta \sigma_n(f) \big)^q \frac{1}{n}\Big)^{1/q}.
\end{equation}
In order to see (1), we observe that Proposition~\ref{prop:quasi_greedy}
implies  $\| f - B_k f\| \lesssim \sigma_k(f)$. Therefore,
\[
k^\beta \|f - B_k f\| \lesssim k^\beta \sigma_k(f)     \lesssim 
\Big( \sum_{j=k/2}^k \big(j^\beta \sigma_j(f) \big)^q \frac{1}{j} \Big)^{1/q} \leq \|f\|_{\mathcal A_q^\beta},
\]
which implies (1) of Lemma~\ref{lem:abstract}.

In order to check (3), we first note that 
\[
\sigma_{mk}(f) \leq \|f - B_k f\|\lesssim \sigma_k(f),    
\]
and therefore, using \eqref{eq:approx_equiv},
\begin{equation}\label{eq:quasi_greedy}
    \| f\|_{\mathcal A_q^\beta} \simeq \|f\|_{\mathcal X} + \Big( \sum_{n=1}^\infty \big(n^\beta \| f - B_n f\|_{\mathcal X} \big)^q \frac{1}{n}\Big)^{1/q}.
\end{equation}
Let $(j_\ell)$ be a sequence such that \eqref{eq:def_mu} is satisfied for 
$f$. 
For arbitrary integers $M$, we have for each $n<M$ by the unconditionality of $\Psi$
\begin{align*}
    \| B_M f - B_n(B_M f) \| = \Big\| \sum_{\ell=n+1}^M W_{j_\ell} f \Big\|
    \lesssim \Big\| \sum_{\ell=n+1}^\infty W_{j_\ell} f\Big\| = \| f - B_n f\|,
\end{align*}
where we also used \eqref{eq:BM}. Using this inequality, the fact that $\Psi$ is an 
unconditional basis in $\mathcal X$, and the equivalence \eqref{eq:quasi_greedy},
we obtain
\begin{align*}
    \|B_m f\|_{\mathcal A_q^\beta} &\simeq \|B_M f\|_{\mathcal X} + \Big(\sum_{n=1}^\infty \big( n^\beta \| B_M f - B_n(B_M f)\|_{\mathcal X}\big)^q \frac{1}{n} \Big)^{1/q} \\
    &\lesssim \| f\|_{\mathcal X} + \Big(\sum_{n=1}^\infty \big( n^\beta \|  f - B_n f\|_{\mathcal X}\big)^q \frac{1}{n} \Big)^{1/q}
    \simeq \|f\|_{\mathcal A_q^\beta}.
\end{align*}
This also proves (3) of Lemma~\ref{lem:abstract}.

\subsection*{Acknowledgments}
	M. Passenbrunner is supported by the Austrian Science Fund FWF, projects P32342 and P34414.

\bibliographystyle{plain}
\bibliography{main-variation}

\end{document}